\documentclass[a4paper,fleqn]{cas-sc}
\usepackage[numbers]{natbib}

\newtheorem{theorem}{Theorem}
\newtheorem{lemma}[theorem]{Lemma}
\newdefinition{rmk}{Remark}
\newproof{pf}{Proof}

\newtheorem{proposition}[theorem]{Proposition}
\newtheorem{corollary}[theorem]{Corollary}
\newtheorem{question}{Question}

\newcommand{\LT}{\widetilde L}
\newcommand{\cL}{\mathcal L}
\newcommand{\R}{\mathbb R}
\newcommand{\C}{\mathbb C}

\DeclareMathOperator{\conv}{conv}
\DeclareMathOperator{\supp}{supp}
\def\x#1{} 
\def\cdc{,\ldots,}
\def\Up#1{\vspace*{-#1em}}
\definecolor{nb}{rgb}{0,0,.7} 
\newcommand{\nb}[1]{{\color{black}#1}}
\newenvironment{nbblock}{\color{black}}{}
\newcommand{\nv}[1]{{\color{nb}#1}} 

\date{}

\begin{document}
\let\WriteBookmarks\relax
\def\floatpagepagefraction{1}
\def\textpagefraction{.001}
\shorttitle{Digraph Laplacian spectra: The 2005 polygon conjecture fails for all orders above four}
\shortauthors{P. Chebotarev}

\title [mode = title]{Digraph Laplacian spectra: The 2005 polygon conjecture fails for all orders above four}
\tnotemark[1] 

\tnotetext[1]{This work was supported by the Israel Science Foundation (grants 1225/20 and 907/24)}

\author[1]{Pavel Chebotarev}[orcid=0000-0001-8232-847X]    

\ead{pavel4e@gmail.com; pavel4e@technion.ac.il}


\affiliation[1]{organization={Department of Mathematics, Technion---Israel Institute of Technology},
                city={Haifa},
                postcode={3200003},
                country={Israel}}

\begin{abstract}
Let \nb{$\Lambda_n$ be the set of all eigenvalues of standardized Laplacian matrices of order $n$, equivalently, of stochastic $n\times n$ matrices with off-diagonal entries at most $\frac1n$. In 2005, a polygon $S_n\subseteq\Lambda_n$ was constructed, and it was conjectured that $\Lambda_n=S_n$; this holds for $n\le3$. We show that the conjecture fails for every $n\ge5$. The vertices of $S_n$ are eigenvalues of circulant Laplacians built from powers of the directed $n$-cycle. To first order, no perturbation of these circulants moves such an eigenvalue out of $S_n$; however, adding chords with nonconstant weights moves it along an edge of $S_n$ to first order and, with an explicit coefficient, beyond the supporting line of this edge to second order. Consequently, every edge of $S_n$ not incident to $0$ or $1$ contains interior points of $\Lambda_n$, except possibly, for even $n$, the two horizontal edges, whereas the four edges incident to $0$ or $1$ lie on $\partial\Lambda_n$. For $n=4$, whose only other edges are horizontal, the question \nv{was left open in the previous versions; it has since been settled, $\Lambda_4=S_4$, by a computer-assisted proof of L.~Marshall, after an earlier claim by D.\,Z.~Liu, who also proved that $\Lambda_n$ is not convex for $n\ge5$.} We certify an explicit weighted counterexample for $n=5$, lying only $1.5\cdot10^{-3}$ outside $S_5$, and an unweighted one for $n=7$: the directed $6$-cycle with a duplicated vertex. Its eigenvalue outside $S_7$ is also an eigenvalue of the two-chord digraph that ends the above perturbation; for the analogous digraphs of order $n$, the excess is asymptotic to $\frac{\pi^2}{2n^4}$. We also observe that $S_n$ is a zonotope, describe $\Lambda_n$ in terms of eigenvectors, and prove that $S_m\subseteq S_n$ whenever $m\mid n$.}
\end{abstract}

\begin{keywords}
Laplacian matrix of a digraph \sep nonsymmetric Laplacian \sep spectral localization \sep stochastic matrix \sep zonotope \sep \nb{eigenvalue perturbation}
\MSC[2020]{15A18, 05C50,\\ 15B51, 52B12}
\end{keywords}

\maketitle

\section{Introduction}

Spectral localization for nonsymmetric Laplacian matrices is closely related to the classical Karpelevi\v{c} problem of determining the possible eigenvalues of stochastic matrices \cite{Karpelevich51E,JohnsonPaparella017,KirklandLaffeyS020}.

\nb{To see this connection, recall that a \emph{standardized Laplacian matrix} $\LT$ of order $n$ is a real matrix with zero row sums and
off-diagonal entries in $\bigl[-\frac1n,0\bigr]$ (see Section~\ref{s:polygzono}). Let $\Lambda_n$ denote the set of all eigenvalues of such matrices. Then $P=I-\LT$ is stochastic, and every stochastic $n\times n$ matrix whose off-diagonal entries do not exceed $1/n$ arises in this way. Since $\sigma(P)=1-\sigma(\LT)$, the eigenvalue region of this stochastic subclass is $1-\Lambda_n$, the reflection of $\Lambda_n$ about $1/2$.

However, by the complementary-digraph symmetry \cite[Theorem~1]{AgaChe05LAA}, $\Lambda_n$ is invariant under reflection about $1/2$: $1-\Lambda_n=\Lambda_n$. Thus the two eigenvalue regions coincide, and $\Lambda_n\subseteq\Theta_n$, where $\Theta_n$ is the Karpelevi\v{c} region of all possible eigenvalues of stochastic $n\times n$ matrices.}
A polygon $S_n\subseteq\Lambda_n$ was constructed in~\cite{AgaChe05LAA}, where it was also conjectured that this inclusion is in fact an equality:
\[
S_n=\Lambda_n.
\]

To the best of our knowledge, this conjecture has remained unresolved since its formulation in 2005.

The polygon $S_n$ has a natural cyclic origin. Circulant matrices, including circulant Laplacians, are polynomials in the adjacency matrix of the directed $n$-cycle, so they share the eigenvector $(1,\mu,\mu^2,\dots,\mu^{n-1})^{\mathsf T}$,
$\mu=e^{-2\pi i/n}$, whose components are the $n$th roots of unity. The vertices
of $S_n$ are the eigenvalues, for this common eigenvector, of certain circulant
standardized Laplacians. We first make this geometry explicit by showing that $S_n$ is a zonotope. This gives a short
proof of $S_n\subseteq\Lambda_n$ and leads to a description of $\Lambda_n$ in terms of eigenvectors:
\begin{equation}\label{e:union}
\Lambda_n=\bigcup_{v\in\C^n\setminus\{0\}}\;\bigcap_{i\in\supp v} Z_i(v),\quad\supp v=\{i: v_i\ne0\},
\end{equation}
where for each nonzero $v\in\C^n$, the $Z_i(v)$ are certain zonotopes, one for each vertex~$i$, built from the ratios $v_j/v_i$ (see Section~\ref{s:eigvec}); their intersection consists of exactly those $\lambda$ for which $v$ is an eigenvector, with eigenvalue $\lambda$, of some
standardized Laplacian matrix.
For the common eigenvector of circulants, and more generally for any vector $v_0$ whose components are the $n$ distinct $n$th roots of unity, every $Z_i(v_0)$ is exactly~$S_n$. Thus the polygon conjecture asserts that this vector is extremal:
$\bigcap_{i\in\supp v}Z_i(v)\subseteq S_n$ for every nonzero $v\in\C^n$.

That extremality assertion is false.
For each $n\ge4$, we define a digraph $\Gamma_n$ by duplicating one vertex of the directed $(n-1)$-cycle; this keeps the cyclic structure but destroys circulant symmetry.
The standardized Laplacian of $\Gamma_7$ has an eigenvalue lying strictly outside $S_7$. Since the excess is only about $2.1\cdot10^{-4}$, \ref{a:certification} gives a rigorous certification separating this eigenvalue from $S_7$, rather than relying on floating-point computation. For the same construction, if $\delta_n$ denotes the signed excess of the corresponding eigenvalue beyond the relevant supporting line of $S_n$, then
\[
\delta_n
=
\frac{\pi^2}{2n^4}
-
\frac{9\pi^2}{4n^5}
+
O\bigl(n^{-6}\bigr).
\]
Hence $S_n\subsetneq\Lambda_n$ for all sufficiently large $n$. Numerically, the same construction gives $\delta_n>0$ for every integer $7\le n\le300$.

\nb{A second mechanism gives counterexamples for every $n\ge5$ (Section~\ref{s:chords}). The standardized Laplacian of the directed $n$-cycle has the vertex $\lambda_1$ of $S_n$ as an eigenvalue. Adding chords $k\to k+2$ with weights $t\omega_k$ moves this eigenvalue, to first order in~$t$, along the edge $[\lambda_1,\lambda_2]$; its signed distance from the supporting line has the second-order term 
$\frac2n\sin^2\frac{2\pi}n\,\gamma(\omega)\,t^2$, where $\gamma(\omega)$ is the cyclic autocovariance of the weights at lag~$1$. For two consecutive chords of equal weight, $\gamma(\omega)>0$ exactly when $n\ge5$. For $n=5$, a certified explicit example has an eigenvalue at distance $1.537\ldots\cdot10^{-3}$ from~$S_5$, about seven times the distance for~$\Gamma_7$. At $t=1$, the two-chord digraph has the same eigenvalues as $\Gamma_n$ except that $\frac1n$ is replaced by~$\frac2n$; numerically, for $n=7$, the branch issuing from $\lambda_1$ ends at the exterior eigenvalue of $\Gamma_7$. 
The same mechanism, applied to the union of the first $m$ powers of the Hamiltonian cycle with chords $k\to k+m+1$, shows that every edge of $S_n$ not incident to $0$ or~$1$ contains interior points of $\Lambda_n$, except possibly, for even~$n$, the two horizontal edges, at which the second-order term never points outward (Section~\ref{ss:edges}). By contrast, the four edges incident to $0$ or~$1$ lie on $\partial\Lambda_n$ \cite[Theorems~5 and~6]{AgaChe05LAA}.}

We also show that $S_m\subseteq S_n$ whenever $m\mid n$, with $S_m$ inscribed in $S_{2m}$ and $S_m\setminus\{0,1\}$ interior to $S_n$ if $m\mid n$ and $n\ge3m$; consequently, $\bigl(\bigcup_n S_n\bigr)\setminus\{0,1\}$ is open and convex. Moreover, $\Lambda_n$ is a compact semialgebraic set, star-shaped with respect to every point of $[0,1]$.

\nb{Since $\Lambda_n=S_n$ for $n\le3$ \cite{AgaChe05LAA}, \nv{in the previous versions of this paper, the conjecture remained} open only for $n=4$. For this order, our computations found no counterexample, and the second-order term above never points outward. We leave this order open, together with the broader problem of describing the precise geometry of $\Lambda_n$ once it extends beyond~$S_n$.}

\nv{After the second version of this paper was posted (arXiv:2610.14100v2, 20 October 2026), D.\,Z.~Liu~\cite{Liu026} (Zenodo, posted 25 October 2026) obtained several further results. Most notably, he proved that $\Lambda_n$ is not convex for every $n\ge5$. Near the vertex $\lambda_1$, every point $z\in\Lambda_n$ satisfies $d(z)\le C|z-\lambda_1|^2$. The eigenvalues $\lambda(t)$ of Theorem~\ref{t:edges} (with $m=1$) satisfy this bound, since
$d(\lambda(t))$ is of order~$t^2$ and $|\lambda(t)-\lambda_1|$ of order~$t$. However, along the segment $[\lambda_1,\lambda(t)]$ the function $d$ grows linearly with the distance from~$\lambda_1$, so the points of this segment close to $\lambda_1$ violate the bound. Hence, for small $t>0$, the segment $[\lambda_1,\lambda(t)]$, whose endpoints lie in~$\Lambda_n$, is not contained in~$\Lambda_n$. In particular, $\Lambda_n$ is not star-shaped with respect to~$\lambda_1$. The proof combines a quantitative form of Proposition~\ref{p:first} at~$\lambda_1$, the fact that only the relabelings of the standardized Laplacian of the directed $n$-cycle have the eigenvalue $\lambda_1$ (a consequence of \cite[Remark~2]{AgaChe05LAA}, based on~\cite{DmitrievDynkin46E}), and a compactness argument. This answers the convexity part of Question~\ref{q:shape}. Liu also showed that at the horizontal edges the second-order term never points outward along any $C^2$ curve of standardized Laplacians through the corresponding circulant, which extends Remark~\ref{r:central}, and found explicit digraphs with three fractional chord weights for $n=5$ and $n=6$ whose eigenvalues lie at distances $2.47\cdot10^{-3}$ and $1.79\cdot10^{-3}$ beyond $[\lambda_1,\lambda_2]$. Finally, Liu announced a computer-assisted proof that $\Lambda_4=S_4$, based on a linear-programming reduction to digraphs with at most six fractional weights and on exact Bernstein expansions. Independently, L.~Marshall~\cite{Marshall026} (Zenodo, posted 28 October 2026) published a computer-assisted proof of $\Lambda_4=S_4$ with open
certificates. It reduces the question, via \cite[Theorems~5 and~6]{AgaChe05LAA} and a linear-programming argument, to the nonnegativity of an explicit polynomial on $836$ classes of six-dimensional faces of the weight cube. We have checked the reduction, and the certificates have been replayed by two independent checkers, one of them using only exact rational arithmetic in the Python standard library. Thus $\Lambda_n=S_n$ holds exactly for
$n\le4$, and, by Liu's theorem, $\Lambda_n$ is convex exactly for $n\le4$. Neither work has yet been peer reviewed.}

\section{The polygon $S_n$ and its zonotope representation}\label{s:polygzono}

A \emph{standardized Laplacian matrix} of order $n$ is a real matrix $\LT=(\ell_{ij})$ with zero row sums and
$-\tfrac1n\le\ell_{ij}\le 0$ for $j\ne i$.
For all $i,j=1\cdc n,\,j\ne i,$ put $a_{ij}=-\ell_{ij}\in\bigl[0,\tfrac1n\bigr].$ Then we have
\[
(\LT v)_i=\sum_{j\ne i}a_{ij}\,(v_i-v_j),\quad v\in\C^n,\;\, i=1,\ldots,n.
\]
Let $\cL_n$ be the set of such matrices and $\Lambda_n=\bigcup_{\LT\in\cL_n}\sigma(\LT)$, where $\sigma(\LT)$ is the spectrum of~$\LT$.
We study the properties of $\Lambda_n$ for $n\ge2$.

Standardized Laplacian matrices are normalized Laplacian matrices of weighted digraphs. Fix $b>0$, and let $\Gamma$ be a weighted digraph on $n$ vertices with arc weights in $(0,b]$. Its Laplacian matrix $L(\Gamma)=(\ell_{ij})$ is defined by setting, for $j\ne i$, $-\ell_{ij}$ equal to the weight of the arc $(i,j)$ if $\Gamma$ has this arc and $0$ otherwise, the diagonal entries being determined by zero row sums. Then $\LT(\Gamma)=(nb)^{-1}L(\Gamma)$ belongs to $\cL_n$, and, conversely, every $\LT\in\cL_n$ arises in this way: $\Gamma$ has an arc $(i,j)$ of weight $nb\,a_{ij}$ whenever $a_{ij}>0$. Thus $(\LT v)_i$ is a weighted sum over the out-neighbors $j$ of vertex~$i$, and the matrices in $\cL_n$ with off-diagonal entries in $\bigl\{0,-\frac1n\bigr\}$ are exactly the standardized Laplacians of digraphs with all arc weights equal to~$b$. For such digraphs, $\LT(\Gamma)$ does not depend on~$b$ and is the standardized Laplacian of the underlying unweighted digraph.

Put $\mu=e^{-2\pi i/n}$ and, as in~\cite[Section~4]{AgaChe05LAA},
\begin{equation}\label{lambda_k}
\lambda_k=\frac1n\bigl(k-\mu-\mu^2-\dots-\mu^k\bigr),\qquad k=0,1,\dots,n-1.
\end{equation}

Let $S_n=\conv\{\lambda_k,\bar\lambda_k: 0\le k\le n-1\}$.
It is proved in~\cite[Theorem~6]{AgaChe05LAA} that $S_n\subseteq\Lambda_n$, and
it is conjectured~\cite[Conjecture~1]{AgaChe05LAA} that $\Lambda_n=S_n$.

Let $K\subset\C$ be a nonempty compact convex set. A line is a
\emph{supporting line} of $K$ if it meets $K$ and $K$ lies in one of the two
closed half-planes bounded by it; this half-plane is a \emph{supporting half-plane} of~$K$, and the unit normal to the line pointing away from it is the \emph{outward normal}. For each direction $e^{i\theta}$, there is exactly one supporting line with outward normal $e^{i\theta}$, namely $\{z:\operatorname{Re}(e^{-i\theta}z)=h_K(\theta)\}$, where
\[
h_K(\theta)=\max_{z\in K}\operatorname{Re}(e^{-i\theta}z)
\]
is the \emph{support function} of~$K$ (see, e.g., \cite{Schneider14}); thus $h_K(\theta)$ is the signed distance from the origin to this line. Moreover, $K$ is the intersection of its supporting
half-planes:
\[
K=\bigcap_{\theta}\bigl\{z:\operatorname{Re}(e^{-i\theta}z)\le h_K(\theta)\bigr\},
\]
so that, for compact convex $K,K'$, one has $K\subseteq K'$ if and only if $h_K\le h_{K'}$.

A \emph{zonotope} in $\R^d$ is a translate of a Minkowski sum of finitely many segments $[0,g]$, $g\in\R^d$ (see, e.g., \cite{Schneider14}). All zonotopes below lie in $\C\cong\R^2$ and have 
the form $\sum_m[0,g_m]=\bigl\{\sum_m t_mg_m: 0\le t_m\le1\bigr\}$ with $g_m\in\C$; the vectors $g_m$ are called the \emph{generators}. Such a zonotope is centrally symmetric about $\frac12\sum_m g_m$.

Since support functions add under Minkowski sums and $h_{[0,g]}(\theta)=\bigl(\operatorname{Re}e^{-i\theta}g\bigr)_+$, where $x_+=\max\{x,0\}$,
\begin{equation}\label{e:hZ}
h_{\sum_m[0,g_m]}(\theta)=\sum_m\bigl(\operatorname{Re}e^{-i\theta}g_m\bigr)_+ .
\end{equation}

We now show that $S_n$ is a zonotope.
Let
\begin{equation}\label{e:g_m}
g_m=\frac1n(1-\mu^m),\quad m=1,\dots,n-1.
\end{equation}
Then
\[
\lambda_k=\sum_{m=1}^{k}g_m,\qquad \lambda_k-\lambda_{k-1}=g_k,\qquad
g_{n-k}=\bar g_k,\qquad \sum_{m=1}^{n-1}g_m=1,\quad k=1\cdc n-1 .
\]
Since $1-e^{-i\varphi}=2\sin\tfrac\varphi2\,e^{i(\pi/2-\varphi/2)}$, we have
\begin{equation}\label{e:gen}
g_m=\frac2n\sin\frac{\pi m}{n}\;e^{\,i\bigl(\frac\pi2-\frac{\pi m}{n}\bigr)},
\end{equation}
so the $g_m$ are nonzero, $\operatorname{Re}g_m>0$, and for $n\ge3$, the slopes $\operatorname{Im}g_m/\operatorname{Re}g_m=\cot\frac{\pi m}{n}$ decrease strictly in~$m$.
Hence the polygonal line $0=\lambda_0,\lambda_1,\dots,\lambda_{n-1}=1$ is the graph of a strictly concave piecewise linear function $f_n$ on $[0,1]$ with breakpoints $\lambda_1,\dots,\lambda_{n-2}$ and $f_n(0)=f_n(1)=0$.
For $n=2$, we have $S_2=[0,1]$ and set $f_2\equiv0$. Thus, for every $n\ge2$,
\[
S_n=\{x+iy:\ 0\le x\le1,\ |y|\le f_n(x)\},
\]
and all $2(n-1)$ points $0,\lambda_1,\dots,\lambda_{n-2},1,\bar\lambda_{n-2},\dots,\bar\lambda_1$
are vertices of~$S_n$. Moreover,
$1-\lambda_k=\sum_{m>k}g_m=\overline{\lambda_{n-1-k}}$, so $S_n$ is centrally symmetric about~$\frac12$.

\begin{proposition}\label{p:zono}
$S_n$ is the zonotope with generators $g_1,\dots,g_{n-1}$:
\[
S_n=\sum_{m=1}^{n-1}[0,g_m]=\Bigl\{\sum_{m=1}^{n-1}t_m g_m:\ 0\le t_m\le 1\Bigr\}.
\]
\end{proposition}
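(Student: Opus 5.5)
We prove the two inclusions separately, using the facts already established before the statement: $\lambda_k=\sum_{m\le k}g_m$, $g_{n-k}=\bar g_k$, $\sum_m g_m=1$, $\operatorname{Re}g_m>0$, and the strict decrease of the slopes $\cot\frac{\pi m}{n}$. Write $Z=\sum_{m=1}^{n-1}[0,g_m]$.

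\emph{Step 1: $S_n\subseteq Z$.} The zonotope $Z$ is convex, so it suffices to show that every $\lambda_k$ and every $\bar\lambda_k$ lies in $Z$. For $\lambda_k$, take $t_1=\dots=t_k=1$ and the remaining $t_m=0$. For $\bar\lambda_k$, use $\bar g_m=g_{n-m}$ to write $\bar\lambda_k=\sum_{m=n-k}^{n-1}g_m$. This is again a $0/1$ combination of the generators, so $\bar\lambda_k\in Z$.

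\emph{Step 2: $Z\subseteq S_n$.} A zonotope is the convex hull of its $0/1$ vertex sums $\sum_{m\in A}g_m$ with $A\subseteq\{1\cdc n-1\}$. It is therefore enough to show that every such sum lies in $S_n=\{x+iy:0\le x\le1,\ |y|\le f_n(x)\}$.

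\emph{Step 3: the bounds on $x$.} Since $\operatorname{Re}g_m>0$ and the real parts sum to $1$, the real part $x$ of any such sum lies in $[0,1]$.

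\emph{Step 4: the bound $y\le f_n(x)$.} This is the key step; it is a rearrangement argument. Traverse the generators indexed by $A$ in increasing order of $m$, that is, in order of decreasing slope. The resulting path from $0$ to $\sum_{A}g_m$ has segments with the same real widths $\operatorname{Re}g_m$ as the corresponding segments of the full path $0=\lambda_0,\dots,\lambda_{n-1}=1$.

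We compare with that full path. Among all subsets $A$ whose real parts sum to a given $x$, the height $\sum_A\operatorname{Im}g_m$ is maximized by taking the steepest slopes first. This is the greedy/fractional-knapsack principle: maximize $\sum t_m\operatorname{Im}g_m$ subject to $\sum t_m\operatorname{Re}g_m=x$ and $t_m\in[0,1]$. The greedy solution fills $t_1,t_2,\dots$ in order and gives exactly $f_n(x)$, because $f_n$ is the piecewise linear function whose successive pieces are $g_1,g_2,\dots$. Since a $0/1$ choice of the $t_m$ is a feasible point of this linear program, we get $\operatorname{Im}\le f_n(x)$.

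\emph{Step 5: the bound $y\ge -f_n(x)$.} This follows by conjugation. Complex conjugation maps the set of generators to itself ($\bar g_m=g_{n-m}$), so $Z$ is symmetric under conjugation. Applying Step 4 to the conjugate sum gives $-\operatorname{Im}\le f_n(x)$.

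The step needing the most care is Step 4. One must justify cleanly that the LP optimum equals $f_n(x)$; the simplest route is an exchange argument that uses the strict decrease of the slopes. Alternatively, the whole inclusion follows from support functions via \eqref{e:hZ}. The maximum of $\operatorname{Re}(e^{-i\theta}z)$ over $Z$ is attained at $\sum_{m:\operatorname{Re}(e^{-i\theta}g_m)>0}g_m$. The set of such $m$ is a cyclic interval of generator directions, because the arguments $\frac\pi2-\frac{\pi m}{n}$ are monotone in $m$ and span an angle less than $\pi$. Such an interval is either an initial segment $\{1\cdc k\}$ or a final segment $\{n-k\cdc n-1\}$, giving the sum $\lambda_k$ or $\bar\lambda_k$ respectively. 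Hence $h_Z=h_{S_n}$, and the two sets are equal. I would present this support-function version as the main proof, since it avoids the LP and uses the tools the paper has just introduced.
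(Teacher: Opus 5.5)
Your proof is correct, but it takes a different route from the paper's. The paper argues structurally. From the concave profile $f_n$ and the central symmetry $1-\lambda_k=\overline{\lambda_{n-1-k}}$, it concludes that $S_n$ is a centrally symmetric convex polygon, and hence a zonogon by a cited classical fact. Since the edges of a zonogon are translates of its generators, each generator contributing a pair of opposite parallel edges, the paper reads off the generators from the upper chain $0,\lambda_1,\dots,\lambda_{n-1}=1$. It then fixes the translation by noting that $0$ is the leftmost vertex of both sets.

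You instead prove the two inclusions directly. In your support-function version, Step~1 gives $S_n\subseteq Z$ straight from the definition of $S_n$ as a convex hull. The reverse inequality $h_Z\le h_{S_n}$ holds because every linear functional is maximized on $Z$ at some $\lambda_k$ or $\bar\lambda_k$. This uses only \eqref{e:hZ} and the angular monotonicity of the generators. It needs neither the zonogon classification nor the central symmetry nor the profile $f_n$, so it is self-contained, whereas the paper's proof is shorter only because of the citation. Your LP version does use $f_n$, and there the reduction to $0/1$ sums in Step~2 is unnecessary, since every $t\in[0,1]^{n-1}$ is already feasible for the LP.

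One step deserves to be written out: why $\{m:\operatorname{Re}(e^{-i\theta}g_m)>0\}$ is an initial or final segment rather than a middle block. Since $\operatorname{Re}(e^{-i\theta}g_m)=|g_m|\cos\bigl(\frac\pi2-\frac{\pi m}{n}-\theta\bigr)$ and all generator arguments lie in $(-\frac\pi2,\frac\pi2)$, take $\theta\in(-\pi,\pi]$. For $\theta\in[0,\pi)$ the condition reduces to $\frac\pi2-\frac{\pi m}{n}>\theta-\frac\pi2$, which gives an initial segment. For $\theta\in(-\pi,0)$ it reduces to $\frac\pi2-\frac{\pi m}{n}<\theta+\frac\pi2$, which gives a final segment. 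For $\theta=\pi$ it never holds. Your remark that the arguments ``span an angle less than $\pi$'' gives connectedness, but the endpoint property comes from the window being a half-plane.
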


\begin{pf}
For $n=2$ the assertion is immediate. Assume $n\ge3$.
By the above, $S_n$ is a centrally symmetric convex polygon, hence a zonogon, i.e., a two-dimensional zonotope~\cite[p.~319]{BoltyanskiMartiniSoltan97}.
The edges of a zonogon are translates of its generators, each generator contributing a pair of opposite parallel edges. The upper boundary of $S_n$ is the chain $0,\lambda_1,\dots,\lambda_{n-1}=1$ with edge vectors $g_1,\dots,g_{n-1}$, which are pairwise nonparallel; hence $S_n$ is a translate of
$\sum_{m=1}^{n-1}[0,g_m]$. Both sets have $0$ as their leftmost vertex (the point minimizing $\operatorname{Re}z$), so the translation is trivial.
\qed\end{pf}

The generators of $S_n$ are the edge vectors of either boundary chain joining two opposite vertices (see Fig.~\ref{f:counter}).

\begin{figure}[pos=htb] 
\centering
\includegraphics[width=\textwidth]{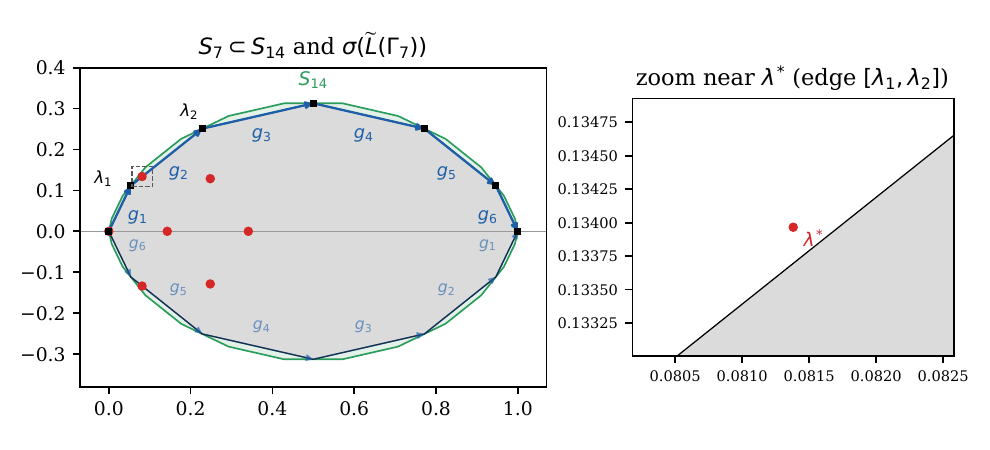}
\caption{Left: $S_7$ (grey) inside $S_{14}$ (green outline). The arrows show the generators $g_1,\dots,g_6$ of $S_7$: its upper boundary is the chain
$g_1,\dots,g_6$ from $0$ to $1$, with vertices $\lambda_k=g_1+\dots+g_k$, and its lower boundary is the chain $g_6,\dots,g_1$. The vertices of $S_7$ lie on edges of $S_{14}$ (Proposition~\ref{p:nested}). Red dots: the spectrum of $\LT(\Gamma_7)$.
Right: zoom near $\lambda^\ast$; the eigenvalue lies outside the edge $[\lambda_1,\lambda_2]=\lambda_1+[0,g_2]$ of~$S_7$.}
\label{f:counter}
\end{figure}

By~\eqref{e:hZ}, Proposition~\ref{p:zono} gives
\begin{equation}\label{e:hS}
h_{S_n}(\theta)
=\frac1n\sum_{m=1}^{n-1}\Bigl(\cos\theta-\cos\bigl(\theta+\tfrac{2\pi m}{n}\bigr)\Bigr)_+ .
\end{equation}

Proposition~\ref{p:zono} yields a short proof of the inclusion $S_n\subseteq\Lambda_n$ proved in \cite[Theorem~6]{AgaChe05LAA}.

\begin{corollary}\label{c:circ}
Every point of $S_n$ is an eigenvalue of a circulant standardized Laplacian matrix of order $n$. Hence $S_n\subseteq\Lambda_n.$
\end{corollary}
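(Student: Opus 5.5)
We use Proposition~\ref{p:zono} and write each point of $S_n$ as $z=\sum_{m=1}^{n-1}t_mg_m$ with $t_m\in[0,1]$. From this representation we read off the weights of a circulant.

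Let $C$ be the permutation matrix of the directed $n$-cycle, with $(Cv)_i=v_{i+1}$ (indices mod $n$). Put
\[
\LT=\frac1n\sum_{m=1}^{n-1}t_m\,(I-C^m).
\]
This is a circulant. Its off-diagonal entries are $-\frac{t_m}{n}$ in the positions $(i,i+m)$, and for distinct $m$ these positions are disjoint. Hence every off-diagonal entry lies in $\bigl[-\frac1n,0\bigr]$. Each term $I-C^m$ has zero row sums, so $\LT$ does too. Therefore $\LT\in\cL_n$.

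Next take $v=(1,\mu,\mu^2,\dots,\mu^{n-1})^{\mathsf T}$. Since $\mu^n=1$, we have $(C^mv)_i=\mu^{i+m}=\mu^m v_i$, so $C^mv=\mu^mv$. It follows that
\[
\LT v=\frac1n\sum_m t_m(1-\mu^m)\,v=\sum_m t_mg_m\,v=z\,v,
\]
and so $z\in\sigma(\LT)\subseteq\Lambda_n$. The convention for $\mu$ must be checked against the direction of $C$: with $\mu=e^{-2\pi i/n}$ and $(Cv)_i=v_{i+1}$, the eigenvalue of $C^m$ is $\mu^m$, which matches~\eqref{e:g_m}.

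There is no real obstacle here. The only point needing care is that each off-diagonal entry receives the contribution of exactly one generator. This holds because the $n-1$ chord lengths $m$ occupy disjoint positions, so the bound $\frac1n$ is never exceeded.
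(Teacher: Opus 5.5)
Your proposal is correct and is essentially the paper's proof: it also writes $z=\sum_m t_m g_m$ via Proposition~\ref{p:zono}, forms the circulant $\frac1n\sum_m t_m(I-Q^m)$ with the same cyclic permutation matrix, gets membership in $\cL_n$ from the disjoint off-diagonal supports of the powers, and reads off the eigenvalue $z$ at the eigenvector $(1,\mu,\dots,\mu^{n-1})^{\mathsf T}$. One cosmetic point: with $v_i=\mu^{i-1}$ for $i=1,\dots,n$, the identity should read $(C^mv)_i=\mu^{i+m-1}=\mu^m v_i$.
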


\begin{pf}
By Proposition~\ref{p:zono}, every $z\in S_n$ can be written as
\[
z=\sum_{m=1}^{n-1}t_m g_m,
\qquad 0\le t_m\le1.
\]
Let $Q$ be the cyclic permutation matrix with $q_{k,k+1}=q_{n,1}=1$, and set
\[
\LT_t=\frac1n\sum_{m=1}^{n-1}t_m(I-Q^m).
\]
Since the matrices $Q^m$, $m=1,\ldots,n-1$, have disjoint off-diagonal supports, $\LT_t\in\cL_n$. Moreover, $(1,\mu,\ldots,\mu^{n-1})^{\mathsf{T}}$ is an eigenvector of $\LT_t$ with eigenvalue
$
\sum_{m=1}^{n-1}t_m g_m=z.
$\qed
\end{pf}

\begin{rmk}\label{r:sym}
Since the matrices in $\cL_n$ are real, $\Lambda_n=\overline{\Lambda_n}$.
By~\cite[Theorem~1]{AgaChe05LAA}, $\lambda\in\Lambda_n\setminus\{0,1\}$ implies $1-\lambda\in\Lambda_n$; since $0,1\in\Lambda_n$, we have $\Lambda_n=1-\Lambda_n$.
The same symmetries hold for $S_n$: $S_n=\overline{S_n}$ by definition, in addition to the aforementioned central symmetry about~$\frac12$.
Hence if $\lambda\in\Lambda_n\setminus S_n$, then $\lambda$ has three companions in $\Lambda_n\setminus S_n$: $\bar\lambda$, $1-\lambda$, and $1-\bar\lambda$.
\end{rmk}

\section{An eigenvector description of $\Lambda_n$}\label{s:eigvec}

For $v\in\C^n$ and $i\in\supp v=\{i:v_i\ne0\}$, define the zonotope
\[
Z_i(v)=\frac1n\sum_{j\ne i}\Bigl[0,\;1-\frac{v_j}{v_i}\Bigr]\subset\C .
\]

\begin{proposition}\label{p:union}
Let $v\in\C^n\setminus\{0\}$ and $\lambda\in\C$. Then

$\LT v=\lambda v$ for some $\LT\in\cL_n$ $\iff$ $\lambda\in Z_i(v)$ for all $i\in\supp v$.
\end{proposition}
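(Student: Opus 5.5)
The proof is a row-by-row decoupling. The eigen-equation $\LT v=\lambda v$ splits into $n$ scalar equations,
\[
\sum_{j\ne i}a_{ij}(v_i-v_j)=\lambda v_i,\qquad i=1,\dots,n,
\]
and the $i$th equation involves only the $i$th row $(a_{ij})_{j\ne i}$ of off-diagonal weights. Since $\cL_n$ is exactly the product over $i$ of the cubes $[0,\frac1n]^{n-1}$ of admissible rows, $\LT v=\lambda v$ holds for some $\LT\in\cL_n$ if and only if each row equation is solvable separately within its cube. The proof therefore reduces to analysing one row at a time.

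First, I treat a row with $i\in\supp v$. Dividing by $v_i$ and substituting $a_{ij}=t_j/n$ with $t_j\in[0,1]$ turns the equation into
\[
\lambda=\frac1n\sum_{j\ne i}t_j\Bigl(1-\frac{v_j}{v_i}\Bigr).
\]
By the definition of a zonotope as $\bigl\{\sum t_mg_m:0\le t_m\le1\bigr\}$, this is solvable with all $t_j\in[0,1]$ exactly when $\lambda\in Z_i(v)$. Both directions are immediate: a solution gives membership, and a representation of $\lambda$ in $Z_i(v)$ gives the row.

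Second, I treat a row with $i\notin\supp v$. Here $v_i=0$, so the equation becomes $-\sum_{j\ne i}a_{ij}v_j=0$. This holds with $a_{ij}=0$ for all $j$, so such rows impose no constraint on $\lambda$.

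Combining the rows gives both implications. For $(\Rightarrow)$, the rows of a given $\LT$ witness $\lambda\in Z_i(v)$ for every $i\in\supp v$. For $(\Leftarrow)$, I pick weights from the zonotope representations for $i\in\supp v$ and zero rows otherwise, and set the diagonal entries by the zero-row-sum condition. The resulting $\LT$ lies in $\cL_n$ and satisfies $\LT v=\lambda v$.

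There is no real obstacle here. The only point needing care is the observation that rows outside the support are unconstrained because the zero row is admissible; it is also why the intersection runs over $\supp v$ only.
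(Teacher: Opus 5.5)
Your proposal is correct and follows essentially the same argument as the paper. The eigen-equation decouples by rows. For $i\in\supp v$, dividing row $i$ by $v_i$ (with $a_{ij}=t_j/n$) gives exactly $\lambda\in Z_i(v)$, and for $i\notin\supp v$ the zero row is admissible.
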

\begin{pf}
Row $i$ of $\LT v=\lambda v$ reads $\lambda v_i=\sum_{j\ne i}a_{ij}(v_i-v_j)$ with $a_{ij}\in\bigl[0,\frac1n\bigr]$.
For $v_i\ne0$, dividing by $v_i$ gives exactly $\lambda\in Z_i(v)$. For $v_i=0$, row $i$ can be taken to be zero. The rows are independent of one another.
\qed\end{pf}

Taking the union over all nonzero $v$ yields~\eqref{e:union}.

For $v=(1,\mu,\dots,\mu^{n-1})$, or any vector whose components are the $n$ distinct $n$th roots of unity (the components form a regular $n$-gon), one has $\{v_j/v_i: j\ne i\}=\{\mu^m: m=1,\dots,n-1\}$, so $Z_i(v)=\sum_m[0,g_m]=S_n$ for every~$i$.
Thus Conjecture~1 of~\cite{AgaChe05LAA} can be reformulated as saying that this vector is extremal: for every~$v$,
\[
\bigcap_{i\in\supp v} Z_i(v)\subseteq S_n.
\]
\nb{Sections~\ref{s:counter} and~\ref{s:chords} show that this fails for every $n\ge5$.}

\begin{rmk}
The condition $\lambda\in Z_i(v)$ is the Laplacian form of the condition $(1-\lambda)v_i\in F_i(v)$ underlying the geometric approach of Dmitriev and Dynkin~\cite{DmitrievDynkin46E}, where $F_i(v)=\bigl\{\sum_j p_jv_j:\ 0\le p_j\le\frac1n\ (j\ne i),\ \sum_jp_j=1\bigr\}.$
Indeed $(1-\lambda)v_i=v_i-\sum_{j\ne i}a_{ij}(v_i-v_j)$.
\end{rmk}

\section{An unweighted duplicated-cycle construction}\label{s:counter}

For $n\ge4$ let $\Gamma_n$ be the digraph on vertices $0,0',1,2,\dots,n-2$ with arcs
\[
0\to1,\quad 0'\to1,\quad 1\to2,\ \dots,\ n-3\to n-2,\quad n-2\to0,\quad n-2\to0' .
\]
This is the directed $(n-1)$-cycle in which vertex $0$ has been duplicated: $0'$ is a twin of $0$ with the same out-neighbor and in-neighbor.
Let $\LT_n=\LT(\Gamma_n)$ be the standardized Laplacian of the unweighted digraph~$\Gamma_n$.

\begin{lemma}\label{l:charpoly}
With $y=n\lambda$,
\begin{equation}\label{e:charLn}
\det(\lambda I-\LT_n)
=
\frac{(-1)^{n-1}}{n^{\,n-1}}
\left(\lambda-\frac1n\right)
\Bigl((1-y)^{n-2}(2-y)-2\Bigr).
\end{equation}
In particular, for $n=7$,
\begin{equation}\label{e:charL7}
\det(\lambda I-\LT_7)
=
\frac1{7^6}\,
\lambda(7\lambda-1)
\bigl(y^5-7y^4+20y^3-30y^2+25y-11\bigr),
\qquad y=7\lambda .
\end{equation}
\end{lemma}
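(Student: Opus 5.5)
The plan is to exploit the twin symmetry of $0$ and $0'$ to split off the eigenvalue $\frac1n$, and then to compute the remaining factor as the characteristic polynomial of a weighted directed $(n-1)$-cycle, which can be done in closed form. Throughout we work with $M_n=n\LT_n=L(\Gamma_n)$ and $y=n\lambda$, so that $\det(\lambda I-\LT_n)=n^{-n}\det(yI-M_n)$.

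\emph{Reading off $M_n$.} Every vertex except $n-2$ has out-degree $1$, and $n-2$ has out-degree $2$. Hence $M_n$ has diagonal entries $1$, except $2$ at vertex $n-2$. Its off-diagonal entries are $-1$ at positions $(0,1)$, $(0',1)$, $(k,k+1)$ for $1\le k\le n-3$, and $(n-2,0)$, $(n-2,0')$.

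\emph{Splitting off $\frac1n$.} Columns $0$ and $0'$ of $M_n$ are $e_0-e_{n-2}$ and $e_{0'}-e_{n-2}$. Therefore $M_n(e_0-e_{0'})=e_0-e_{0'}$, so $u=e_0-e_{0'}$ is an eigenvector with $y=1$, that is, $\lambda=\frac1n$.

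For the complementary factor, the rows of $0$ and $0'$ coincide. The partition $\{\{0,0'\},\{1\},\dots,\{n-2\}\}$ is therefore equitable: the subspace $W$ of vectors constant on $\{0,0'\}$ is $M_n$-invariant. Concretely, in the basis $u$, $e_0+e_{0'}$, $e_1,\dots,e_{n-2}$, the matrix $M_n$ becomes block triangular, with a $1\times1$ block equal to $1$ and an $(n-1)\times(n-1)$ block $M'$. Here $M'$ is the quotient matrix: $M'=D-C$ with $D=\operatorname{diag}(1,\dots,1,2)$ on the cyclically ordered classes $\{0,0'\},1,\dots,n-2$, and $C$ the weighted cyclic shift with weights $1,\dots,1$ on the arcs $k\to k+1$ and weight $2$ on the arc $n-2\to\{0,0'\}$.

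Setting up this change of basis carefully, and checking that the arc $n-2\to\{0,0'\}$ really carries the doubled weight, is the step I expect to need the most care. Everything after it is routine.

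\emph{The cyclic determinant.} For $N=n-1$, the matrix $yI-M'$ has diagonal $y-d_k$ and entries $+c_k$ in the cyclic superdiagonal positions. Expanding the determinant, only the identity permutation and the full $N$-cycle contribute. The $N$-cycle has sign $(-1)^{N-1}$, so
\[
\det(yI-M')=(y-1)^{n-2}(y-2)+(-1)^{n-2}\cdot 2=(-1)^{n-1}\Bigl((1-y)^{n-2}(2-y)-2\Bigr).
\]
Combining the two blocks,
\[
\det(\lambda I-\LT_n)=n^{-n}(y-1)\det(yI-M')=n^{-(n-1)}\Bigl(\lambda-\tfrac1n\Bigr)(-1)^{n-1}\Bigl((1-y)^{n-2}(2-y)-2\Bigr),
\]
which is \eqref{e:charLn}.

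\emph{The case $n=7$.} Here $(-1)^6=1$, and we expand $(1-y)^5(2-y)-2$. Its constant term is $2-2=0$, so $y$ is a factor, giving the eigenvalue $0$ as expected. Dividing by $y$ yields the quintic $y^5-7y^4+20y^3-30y^2+25y-11$, and we check its coefficients by direct binomial expansion. Finally, $\lambda-\frac17=\frac17(7\lambda-1)$ and $y=7\lambda$ together produce the prefactor $\frac1{7^6}\lambda(7\lambda-1)$ in \eqref{e:charL7}.
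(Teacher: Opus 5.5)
Your proposal is correct and follows essentially the same route as the paper: you split off the twin eigenvector $u=e_0-e_{0'}$ with eigenvalue $\frac1n$, restrict to the invariant subspace $W=\{x:x_0=x_{0'}\}$, and identify the quotient as a weighted directed $(n-1)$-cycle (your $D-C$ with weight $2$ on the arc $n-2\to\{0,0'\}$ is $n$ times the paper's $D(I-C)$); your remark that only the identity and the full cycle survive in the determinant expansion is exactly the ``direct expansion'' the paper invokes. One small wording fix: rows $0$ and $0'$ of $M_n$ do not literally coincide, since their diagonal entries sit in different columns; what holds, and what you actually use, is $(M_nx)_0=(M_nx)_{0'}$ for $x\in W$, i.e., the two rows agree once columns $0$ and $0'$ are merged.
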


\begin{pf}
We index the coordinates of vectors in $\C^n$ by the vertices of $\Gamma_n$; let $e_k$ be the basis vector with $1$ at vertex $k$ and $0$ elsewhere.
The vector $u=e_0-e_{0'}$ satisfies
\[
\LT_nu=\frac1n u.
\]
The subspace $W=\{x:x_0=x_{0'}\}$ is invariant, since $(\LT_nx)_0-(\LT_nx)_{0'}=\frac1n(x_0-x_{0'}).$
Moreover,
$\C^n=\operatorname{span}\{u\}\oplus W.$

Identifying $x_0=x_{0'}$, the restriction of $\LT_n$ to $W$ is the weighted $(n-1)$-cycle Laplacian $D(I-C),$ where $C$ is the cyclic permutation matrix, and $D=\frac1n\operatorname{diag}(1,\dots,1,2).$

For a weighted directed cycle with $D=\operatorname{diag}(d_1,\dots,d_m)$, a direct expansion gives
\[
\det\bigl(\lambda I-D(I-C)\bigr)
=
(-1)^m\prod_{i=1}^m d_i
\left[
\prod_{i=1}^m\left(1-\frac{\lambda}{d_i}\right)-1
\right].
\]
Here $m=n-1$, $\prod_i d_i=\frac{2}{n^{\,n-1}},$
and, with $y=n\lambda$,
$
\prod_i\left(1-\frac{\lambda}{d_i}\right)
=
(1-y)^{n-2}\left(1-\frac y2\right).
$

Therefore
\[
\det\Bigl(\lambda I-\LT_n\bigr|_W\Bigr)
=
\frac{(-1)^{n-1}}{n^{\,n-1}}
\Bigl((1-y)^{n-2}(2-y)-2\Bigr).
\]

Multiplying by the remaining factor $\lambda-\frac1n$ corresponding to the eigenvector $u$ gives~\eqref{e:charLn}.

For $n=7$,
\[
(1-y)^5(2-y)-2
=
y\bigl(y^5-7y^4+20y^3-30y^2+25y-11\bigr),
\]
and since $y=7\lambda$,\;
$
\left(\lambda-\frac17\right)y
=
\lambda(7\lambda-1),
$
and we have~\eqref{e:charL7}.
\qed\end{pf}

The eigenvectors with $x_0=x_{0'}$ are therefore those of
the quotient, a weighted directed $(n-1)$-cycle with $D=\frac1n\operatorname{diag}(1,\dots,1,2).$
Thus the quotient belongs to the class of weighted cycle Laplacians studied by Hariprasad~\cite{Hariprasad026}, with one
weight equal to $2/n$ and all remaining weights equal to~$1/n$. The plain $(n-1)$-cycle with weights $\frac1n$ (plus an isolated vertex) has the eigenvalue $\frac1n\bigl(1-e^{-2\pi i/(n-1)}\bigr)$, which lies strictly inside $S_n$ just below the edge $[\lambda_1,\lambda_2]$. For $n=7$, adding the twin increases one weight and pushes the corresponding eigenvalue across the edge.

\begin{theorem}\label{t:counter}
$\LT_7$ has an eigenvalue
\[
\lambda^\ast
=
0.0813812844676782\ldots
+
0.1339660536359618\ldots\,i
\]
lying beyond the edge $[\lambda_1,\lambda_2]$ of $S_7$ \textup{(}see Fig.~\textup{\ref{f:counter})}, at distance $2.1130622\ldots\cdot10^{-4}$ from~$S_7$.
In particular, $\sigma(\LT_7)\not\subseteq S_7$ and $S_7\subsetneq\Lambda_7$.
\end{theorem}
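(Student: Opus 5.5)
By Lemma~\ref{l:charpoly}, the eigenvalues of $\LT_7$ are $0$, $\frac17$, and $\lambda=y/7$ with $y$ a root of
\[
p(y)=y^5-7y^4+20y^3-30y^2+25y-11 .
\]
The plan is to locate one complex root $y^\ast$ of $p$ with rigorous error bounds, set $\lambda^\ast=y^\ast/7$, and show that $\lambda^\ast$ violates the supporting half-plane of $S_7$ along the edge $[\lambda_1,\lambda_2]=\lambda_1+[0,g_2]$. No other part of $\partial S_7$ needs to be examined. The support-function description of $S_7$ as an intersection of half-planes shows that violating one supporting half-plane already puts $\lambda^\ast\notin S_7$. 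Moreover, if the orthogonal projection of $\lambda^\ast$ onto the line through $\lambda_1,\lambda_2$ falls inside that segment, then the distance from $\lambda^\ast$ to $S_7$ equals the distance to the line.

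\textbf{Step 1 (the supporting line).} By \eqref{e:gen}, the edge direction is $g_2=\frac27\sin\frac{2\pi}7\,e^{i(\pi/2-2\pi/7)}$, so the outward normal is $\nu=-i\,g_2/|g_2|=e^{-2\pi i/7}$. Equivalently, I take $\theta=-2\pi/7$ in \eqref{e:hS}. The signed excess is then
\[
\delta=\operatorname{Re}\bigl(e^{2\pi i/7}(\lambda^\ast-\lambda_1)\bigr),
\]
with $\lambda_1=\frac17(1-\mu)$. I also verify that $\operatorname{Re}\bigl(\bar g_2(\lambda^\ast-\lambda_1)\bigr)/|g_2|^2\in(0,1)$, so that $\operatorname{dist}(\lambda^\ast,S_7)=\delta$. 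As a cross-check I evaluate $h_{S_7}(-2\pi/7)$ from \eqref{e:hS}; the positive terms should be exactly $m=1,2$.

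\textbf{Step 2 (certified root).} Newton's method gives $y^\ast\approx 7\lambda^\ast\approx 0.56967+0.93776i$. To make this rigorous, I would find a small disk $D$ around the approximation containing exactly one root of $p$. One option is Rouché's theorem, comparing $p(y)$ with its linearization $p(y_0)+p'(y_0)(y-y_0)$ and bounding the remainder by $\sum_{k\ge2}|p^{(k)}(y_0)|r^k/k!$. The alternative is a Kantorovich/Smale $\alpha$-test. For $\delta$ I only need precision of about $10^{-9}$. A clean alternative is exact: eliminate to obtain a real algebraic number. Since $\delta$ is $\operatorname{Re}$ of an expression in $y^\ast$ and $\zeta=e^{2\pi i/7}$, $\delta$ is algebraic, and one can bracket it by interval arithmetic. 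The only subtlety is that $\zeta$ is irrational. I would handle this by treating $\cos\frac{2\pi}7$ and $\sin\frac{2\pi}7$ as intervals, or by using their minimal polynomials, for instance $8c^3+4c^2-4c-1=0$ for $c=\cos\frac{2\pi}7$.

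\textbf{Step 3 (conclusion).} I would show that the interval enclosure of $\delta$ lies in $(2.1130622\cdot10^{-4}\pm10^{-11})$, and in particular is positive, with the projection parameter inside $(0,1)$. This gives the stated distance and hence $\lambda^\ast\in\Lambda_7\setminus S_7$.

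The \textbf{main obstacle} is rigor. The excess of about $2\cdot10^{-4}$ is small relative to the scale of $S_7$, about $0.1$, so the root isolation and the evaluation involving $\cos\frac{2\pi}{7}$ and $\sin\frac{2\pi}{7}$ must carry certified error bounds rather than floating-point values. I would first try the Rouché-disk approach with rational arithmetic, which I expect to succeed with a radius near $10^{-12}$.
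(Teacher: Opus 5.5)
Your plan is the paper's: certify a root of $p$ by Rouch\'e's theorem in rational arithmetic. Then bound the signed distance to the line through $[\lambda_1,\lambda_2]$ using rational enclosures of $\cos\frac{2\pi}7$, $\sin\frac{2\pi}7$ and a Lipschitz transfer, and check that the foot of the perpendicular has parameter in $(0,1)$ (the paper gets $(0.158,0.160)$).

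\textbf{The sign error in Step~1.} The orientation is reversed, so the key inequality fails as written. Along the upper boundary from $0$ to $1$, $S_7$ lies on the clockwise side of the edge direction $g_2$. The outward normal is therefore $i\,g_2/|g_2|=e^{5\pi i/7}=-\mu$ (Lemma~\ref{l:edge}), not $-i\,g_2/|g_2|=e^{-2\pi i/7}=\mu$. Your $\theta=-2\pi/7$ is the inward normal of this edge; it is the outward normal of the opposite parallel edge $[\bar\lambda_4,\bar\lambda_5]$. Consequently your
\[
\delta=\operatorname{Re}\bigl(e^{2\pi i/7}(\lambda^\ast-\lambda_1)\bigr)
\]
equals exactly $-d(\lambda^\ast)$. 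Numerically, $\lambda^\ast-\lambda_1\approx0.02759+0.02228\,i$, so $\delta\approx0.6235\cdot0.02759-0.7818\cdot0.02228\approx-2.11\cdot10^{-4}$. Step~3's claim that the enclosure of $\delta$ is positive is then false, and the argument proves nothing until the sign is fixed. The correct quantity is
\[
d(\lambda^\ast)=\operatorname{Re}\bigl(-e^{2\pi i/7}(\lambda^\ast-\lambda_1)\bigr)=\tfrac17\bigl(\operatorname{Re}(\bar\mu(1-y^\ast))-1\bigr),
\]
corresponding to $\theta=5\pi/7$. Your cross-check would have caught this. For $\theta=-2\pi/7$, the positive terms of \eqref{e:hS} are $m=3,\dots,6$. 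For $\theta=5\pi/7$, only $m=1$ is positive and the $m=2$ term vanishes, so ``exactly $m=1,2$'' should read ``$m=1$ positive, $m=2$ zero''.

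\textbf{The precision budget.} The accuracy you plan for does not certify the stated digits. The true value is $d(\lambda^\ast)=2.11306223795\ldots\cdot10^{-4}$, only about $3.8\cdot10^{-12}$ above $2.1130622\cdot10^{-4}$. Neither an accuracy of $10^{-9}$ nor an enclosure $2.1130622\cdot10^{-4}\pm10^{-11}$ determines the string ``$2.1130622\ldots$''; you need a total error below roughly $3\cdot10^{-12}$. Your Rouch\'e radius of $10^{-12}$ in $y$ contributes at most $1.5\cdot10^{-13}$ to $d$, since $d$ is $\frac17$-Lipschitz in $y$. Together with 16-digit enclosures of $\cos\frac{2\pi}7$ and $\sin\frac{2\pi}7$, that is enough, but your Step~3 must track that tighter bound. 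The paper uses $r=10^{-16}$.
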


\begin{pf}
By Lemma~\ref{l:charpoly}, apart from the eigenvalue $1/7$, the nonzero eigenvalues of $\LT_7$ are obtained from the zeros of
\[
p(y)=y^5-7y^4+20y^3-30y^2+25y-11,
\qquad y=7\lambda.
\]

Let
\[
c=
0.5696689912737478
+
0.9377623754517331\,i .
\]

The certification in \ref{a:certification} gives a zero $y^\ast$ of $p$ satisfying $|y^\ast-c|<10^{-16}.$

Let $\mu=e^{-2\pi i/7}$. In the coordinate $y=7\lambda$, the signed Euclidean distance from the line through the edge
$[\lambda_1,\lambda_2]$, positive on the exterior side, is
\[
d_7(y)
=
\frac17
\left(
\operatorname{Re}\bigl(\bar\mu(1-y)\bigr)-1
\right).
\]
The same appendix shows that
\[
d_7(y^\ast)=2.1130622\ldots\cdot10^{-4}>0
\]
and that the orthogonal projection lies in the interior of $[\lambda_1,\lambda_2]$. Hence $\lambda^\ast=y^\ast/7$ lies outside $S_7$, at the stated distance.
\qed\end{pf}

\subsection{Other orders and asymptotics}

To examine the same construction in other orders, for each $n$ we numerically select the root $y_n^\ast$ of
\[
(1-y)^{n-2}(2-y)=2
\]
near $1-e^{-2\pi i/(n-1)}$, and put $\lambda_n^\ast=y_n^\ast/n$.
Let $\delta_n$ be its signed distance from the line through $[\lambda_1,\lambda_2]$, positive on the exterior side of $S_n$. The large-$n$ branch is characterized intrinsically in the theorem below.

\begin{table}[pos=ht]
\centering
\caption{The eigenvalue branch $\lambda_n^\ast$, its signed distance
$\delta_n$ from the supporting line of $[\lambda_1,\lambda_2]$, and
the scaled quantity $n^4\delta_n$.}
\label{tab:delta}
\begin{tabular}{rllll}
\toprule
$n$ & $\operatorname{Re}\lambda^\ast_n$ & $\operatorname{Im}\lambda^\ast_n$ & $\delta_n$ & $n^4\delta_n$\\
\midrule
6 & 0.13503588 & 0.17386416 & $-2.805\cdot10^{-4}$ & $-0.364$ \\
7 & 0.081381284 & 0.13396605 & $\phantom{-}2.113\cdot10^{-4}$ & 0.507 \\
8 & 0.052574304 & 0.10475785 & $\phantom{-}2.877\cdot10^{-4}$ & 1.178 \\
9 & 0.035832644 & 0.083545162 & $\phantom{-}2.573\cdot10^{-4}$ & 1.688 \\
10 & 0.025476421 & 0.067911669 & $\phantom{-}2.083\cdot10^{-4}$ & 2.083 \\
12 & 0.014178543 & 0.047142501 & $\phantom{-}1.277\cdot10^{-4}$ & 2.648 \\
20 & 0.0028191324 & 0.016669868 & $\phantom{-}2.294\cdot10^{-5}$ & 3.671 \\
50 & 0.00016673942 & 0.002583874 & $\phantom{-}7.147\cdot10^{-7}$ & 4.467 \\
100 & $2.0286542\cdot10^{-5}$ & 0.00063745761 & $\phantom{-}4.707\cdot10^{-8}$ & 4.707 \\
300 & $7.3780222\cdot10^{-7}$ & $7.0158822\cdot10^{-5}$ & $\phantom{-}6.0\cdot10^{-10}$ & 4.86 \\
\bottomrule
\end{tabular}
\end{table}

\begin{theorem}[asymptotics of the violation]\label{t:asymptotic}
For every sufficiently large $n$, the equation $(1-y)^{n-2}(2-y)=2$ has exactly
one root $y_n^\ast$ such that $1-y_n^\ast=e^{w_n}$ with $|w_n+2\pi i/n|<\pi/n$. Moreover, $w_n=-2\pi i/n+O(n^{-2})$, and the signed distance $\delta_n$ of\, $\lambda_n^\ast=y_n^\ast/n$\, from the line through the edge $[\lambda_1,\lambda_2]$, positive on the exterior side, satisfies
\[
\delta_n=\frac{\pi^2}{2n^4}-\frac{9\pi^2}{4n^5}+O\bigl(n^{-6}\bigr).
\]
In particular, $\delta_n>0$ and $S_n\subsetneq\Lambda_n$\, for all sufficiently large~$n$.
\end{theorem}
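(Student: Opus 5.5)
Writing $1-y=e^{w}$ turns the equation $(1-y)^{n-2}(2-y)=2$ into
\[
F(w):=(n-2)w+\log\bigl(1+e^{w}\bigr)-\log 2=2\pi i k
\]
for some integer $k$, with the principal logarithm near $w=0$. We take $k=-1$ and set $w=-2\pi i/n+\varepsilon$. In the disc $|\varepsilon|<\pi/n$ we compare $F(w)+2\pi i$ with the linear part $(n-2)\varepsilon$, and apply Rouch\'e's theorem or a contraction argument. The remaining term is
\[
-2\cdot\frac{-2\pi i}{n}+\log\frac{1+e^{w}}{2}.
\]
Since $\log\frac{1+e^{w}}{2}=\frac w2+\frac{w^2}{8}+O(w^4)$, this term is $O(1/n)$. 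On $|\varepsilon|=\pi/n$ the linear part has modulus about $\pi$, which dominates the $O(1/n)$ term. Hence there is exactly one root in the disc with $k=-1$. Other values of $k$ cannot contribute, because $|\operatorname{Im}F|$ varies by less than $2\pi$ over the disc once the linear part is controlled. Solving $(n-2)\varepsilon=-\frac{4\pi i}{n}-\frac w2-\frac{w^2}{8}-\cdots$ gives $\varepsilon=O(n^{-2})$, which proves $w_n=-2\pi i/n+O(n^{-2})$.

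Next we iterate this fixed-point equation to get $\varepsilon$ to order $n^{-4}$, with an $O(n^{-5})$ error. Write $a=-2\pi i/n$, so that $w=a+\varepsilon$. The equation becomes
\[
(n-2)\varepsilon=-2a-\frac{a+\varepsilon}{2}-\frac{(a+\varepsilon)^2}{8}+O(n^{-4}),
\]
that is,
\[
\varepsilon\bigl(n-\tfrac32\bigr)=-\tfrac52 a-\tfrac{a^2}{8}-\tfrac{a\varepsilon}{4}+O(n^{-4}).
\]
The leading term is $\varepsilon_1=-\frac{5a}{2n}=5\pi i/n^2$, which is purely imaginary. Higher corrections come from $\frac{3}{2n}$ times previous terms, from $a^2/8$ (which is real) and from the cross terms. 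The quartic term of $\log\frac{1+e^w}{2}$ is $-w^4/192$, which is $O(n^{-4})$ and will affect $\varepsilon$ only at order $n^{-5}$.

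The signed distance is
\[
\delta_n=\frac1n\Bigl(\operatorname{Re}\bigl(\bar\mu(1-y)\bigr)-1\Bigr),
\]
by the formula of Theorem~\ref{t:counter} generalized to order $n$: with $\mu=e^{-2\pi i/n}$, the edge $[\lambda_1,\lambda_2]$ has direction $g_2=\frac1n(1-\mu^2)$, and one checks that $\operatorname{Re}(\bar\mu(1-y))=1$ along the edge. Then
\[
\bar\mu(1-y)=e^{w+2\pi i/n}=e^{\varepsilon},
\qquad
\delta_n=\frac1n\bigl(\operatorname{Re}e^{\varepsilon}-1\bigr)
=\frac1n\Bigl(\operatorname{Re}\varepsilon+\frac{\operatorname{Re}\varepsilon^2}{2}+O(|\varepsilon|^3)\Bigr).
\]
Since $\varepsilon\approx 5\pi i/n^2$, we have $\operatorname{Re}\varepsilon^2/2\approx-\frac{25\pi^2}{2n^4}$, which contributes only at order $n^{-5}$ to $\delta_n$. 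What we need is $\operatorname{Re}\varepsilon$ through orders $n^{-3}$ and $n^{-4}$, together with the $\varepsilon^2$ term. From $-a^2/8=\pi^2/(2n^2)$, divided by $n$, we get $\operatorname{Re}\varepsilon\approx\frac{\pi^2}{2n^3}$ and hence $\delta_n\approx\frac{\pi^2}{2n^4}$, matching the claim.

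The next order is collected in the same way. The sources are the $\frac{3}{2n}$ factor acting on the real $n^{-3}$ term, the real part of $-a\varepsilon/4$ (namely $-\frac{a\varepsilon_1}{4}$, with $a\varepsilon_1$ real and equal to $10\pi^2/n^3$), and $-\frac{25\pi^2}{2n^5}$ from $\varepsilon^2/2$. The sum should give $-\frac{9\pi^2}{4n^5}$. This bookkeeping is the main obstacle: every real contribution of order $n^{-4}$ in $\varepsilon$ must be tracked, including the expansion of $1/(n-2)$ and the $\log 2$ branch consistency, with an explicit $O(n^{-6})$ remainder from the contraction estimate.

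Finally, $\delta_n>0$ alone does not place $\lambda_n^\ast$ outside $S_n$, so we also need a projection check. The imaginary part $\operatorname{Im}\varepsilon\approx 5\pi/n^2$ makes $\lambda_n^\ast-\lambda_1$ lie along $g_2$ at relative position strictly between $0$ and $1$. Hence the projection of $\lambda_n^\ast$ falls inside the edge $[\lambda_1,\lambda_2]$. Together with $\delta_n>0$, this gives $\lambda_n^\ast\notin S_n$ and therefore $S_n\subsetneq\Lambda_n$.
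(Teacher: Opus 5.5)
Your strategy is the paper's: take logarithms, fix the branch $k=-1$, solve locally near $w=-2\pi i/n$, then expand $\delta_n=\frac1n(\operatorname{Re}e^{\varepsilon}-1)$. The first part, exactly one root in the disc and $\varepsilon=O(n^{-2})$, is fine.

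The expansion, however, has a sign error. From $(n-2)(a+\varepsilon)+\log\frac{1+e^w}{2}=-2\pi i$ and $na=-2\pi i$, one gets $(n-2)\varepsilon=2a-\log\frac{1+e^w}{2}$. This is your first display, since $2a=-4\pi i/n$, but your second display has $-2a$. The correct reduced equation is
\[
\bigl(n-\tfrac32\bigr)\varepsilon=\tfrac32a-\tfrac{a^2}{8}-\tfrac{a\varepsilon}{4}+O(n^{-4}),
\]
so $\varepsilon_1=\frac{3a}{2n}=-\frac{3\pi i}{n^2}$, matching the term $-3\pi i t^2$ of the paper's $u(t)$, not $5\pi i/n^2$. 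The leading term $\pi^2/(2n^4)$ survives because it comes only from $-a^2/8$, so $\delta_n>0$ for large $n$ still follows. The $n^{-5}$ coefficient, which you assert (``should give'') rather than compute, does not survive. With your values, the order-$n^{-4}$ contributions to $\operatorname{Re}e^{\varepsilon}-1$ you list are $\frac34$, then $-\frac52$ (from $a\varepsilon_1=10\pi^2/n^3$), then $-\frac{25}{2}$, all times $\pi^2/n^4$. That gives $\delta_n=\frac{\pi^2}{2n^4}-\frac{57\pi^2}{4n^5}+\dots$, not the stated formula. With the correct $\varepsilon_1$ one has $a\varepsilon_1=-6\pi^2/n^3$. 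Hence $\operatorname{Re}\varepsilon=\frac{\pi^2}{2n^3}+\bigl(\frac34+\frac32\bigr)\frac{\pi^2}{n^4}+O(n^{-5})$ and $\frac12\operatorname{Re}\varepsilon^2=-\frac{9\pi^2}{2n^4}+O(n^{-5})$. Since $\frac34+\frac32-\frac92=-\frac94$, this yields the claimed term. The method works, but as written the proof does not establish the second-order coefficient, which is the substantive content of the statement. You need to fix the sign and actually carry out the bookkeeping. Your bootstrap from $\varepsilon=O(n^{-2})$ does give the $O(n^{-6})$ remainder, since $|\varepsilon|^3=O(n^{-6})$; the paper gets the same control directly from the analytic implicit-function theorem in $t=1/n$.

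Your final projection check is unnecessary. The line through an edge of the convex polygon $S_n$ is a supporting line (Lemma~\ref{l:edge}), so $\delta_n>0$ alone gives $\lambda_n^\ast\notin S_n$. The projection matters only if you want $\delta_n$ to equal the distance to $S_n$. The check as stated also conflicts with your own numbers. The projection parameter is approximately $-\frac{n}{4\pi}\operatorname{Im}\varepsilon$, so your $\operatorname{Im}\varepsilon\approx 5\pi/n^2$ would put the foot at about $-\frac{5}{4n}<0$, outside the edge. The correct value puts it at about $\frac{3}{4n}$, inside.
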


\begin{pf}
Put $q_n=1-y_n^\ast$. Then $q_n^{\,n-2}(1+q_n)=2.$

Writing $t=1/n$ and $q_n=e^w$, and using $1+e^w=2e^{w/2}\cosh\frac w2,$ we obtain
\[
e^{\bigl(n-\frac32\bigr)w}\cosh\frac w2=1.
\]

If $1-y=e^{w}$ is a root with $|w+2\pi i/n|<\pi/n$, then $\bigl(n-\frac32\bigr)w+\log\cosh\frac w2=2\pi i k$ for some integer~$k$, and
\[
2\pi|k+1|
\le\bigl|nw+2\pi i\bigr|+\tfrac32|w|+\bigl|\log\cosh\tfrac w2\bigr|
<\pi+O(n^{-1}),
\]
so $k=-1$ for large~$n$. Thus such roots are exactly the solutions of
\[
\left(n-\frac32\right)w+\log\cosh\frac w2=-2\pi i
\]
in this disk.
After multiplication by $t$ this becomes
\[
F(t,w):=
\left(1-\frac32t\right)w
+
t\log\cosh\frac w2
+
2\pi i t
=0.
\]
Since $F(0,0)=0,$\, $\frac{\partial F}{\partial w}(0,0)=1,$ the analytic implicit-function theorem gives a unique analytic solution $w=w(t)$ near $t=0$.

Now
\[
\log\cosh\frac w2
=
\frac{w^2}{8}+O(w^4).
\]

Substituting\, $w(t)
=
a_1t+a_2t^2+a_3t^3+a_4t^4+O\bigl(t^5\bigr)$\, into\, $F(t,w(t))=0$\, and comparing coefficients gives
\[
a_1=-2\pi i,\qquad
a_2=-3\pi i,\qquad
a_3=\frac{\pi^2}{2}-\frac{9\pi i}{2},\qquad
a_4=\frac{9\pi^2}{4}-\frac{27\pi i}{4}.
\]

Hence
\[
w(t)
=
-2\pi i t
-3\pi i t^2
+
\left(
\frac{\pi^2}{2}-\frac{9\pi i}{2}
\right)t^3
+
\left(
\frac{9\pi^2}{4}-\frac{27\pi i}{4}
\right)t^4
+
O\bigl(t^5\bigr).
\]
In particular, $w(t)=-2\pi i t+O(t^2)$, so the root $1-y=e^{w(1/n)}$ lies in the disk $|w+2\pi i/n|<\pi/n$ for large~$n$; conversely, every solution in this disk satisfies $|w|<3\pi/n$ and hence, for large~$n$, lies in the neighborhood where $w(t)$ is the unique solution. This proves the first two assertions of the theorem.

We next express the signed distance from the edge. Let $\mu=e^{-2\pi i/n}.$
Since
\[
\lambda_1=\frac{1-\mu}{n},
\qquad
\lambda_2-\lambda_1=\frac{1-\mu^2}{n},
\]
the functional $\operatorname{Re}\bigl(-\bar\mu(\lambda-\lambda_1)\bigr)$ is signed Euclidean distance from the line through $[\lambda_1,\lambda_2]$, with the positive side exterior to $S_n$.
Using\, $\lambda_n^\ast=(1-q_n)/n,$\, we therefore obtain
\[
\delta_n
=
\frac1n
\left(
\operatorname{Re}\bigl(e^{2\pi i/n}q_n\bigr)-1
\right).
\]

Thus, with $t=1/n$,
\[
\delta_n
=
t\left(
\operatorname{Re}e^{\,w(t)+2\pi i t}-1
\right).
\]

Set $
u(t)=w(t)+2\pi i t.
$
Then
\[
u(t)
=
-3\pi i t^2
+
\left(
\frac{\pi^2}{2}-\frac{9\pi i}{2}
\right)t^3
+
\left(
\frac{9\pi^2}{4}-\frac{27\pi i}{4}
\right)t^4
+
O\bigl(t^5\bigr).
\]

Since $u(t)=O\bigl(t^2\bigr)$,
\[
\operatorname{Re}e^{u(t)}-1
=
\operatorname{Re}u(t)
+
\frac12\operatorname{Re}\bigl(u(t)^2\bigr)
+
O\bigl(t^5\bigr).
\]

Here
\[
\operatorname{Re}u(t)
=
\frac{\pi^2}{2}t^3
+
\frac{9\pi^2}{4}t^4
+
O\bigl(t^5\bigr),
\]
while
$
u(t)^2
=
-9\pi^2t^4+O\bigl(t^5\bigr).
$
Therefore
\[
\operatorname{Re}e^{u(t)}-1
=
\frac{\pi^2}{2}t^3
-
\frac{9\pi^2}{4}t^4
+
O\bigl(t^5\bigr),
\]
and hence
\[
\delta_n
=
\frac{\pi^2}{2}t^4
-
\frac{9\pi^2}{4}t^5
+
O\bigl(t^6\bigr),\quad \text{where\;\,}t=1/n.
\]

Since $\lambda_n^\ast$ is an eigenvalue of $\LT_n$ by Lemma~\ref{l:charpoly}, the positivity of $\delta_n$ implies $S_n\subsetneq\Lambda_n$ for all
sufficiently large~$n$.
\qed\end{pf}

Theorem~\ref{t:asymptotic} explains the last column of Table~\ref{tab:delta}, since
\[
n^4\delta_n\longrightarrow\frac{\pi^2}{2}=4.934802\ldots .
\]
A direct computation in 50-digit arithmetic gives $\delta_n>0$ for every integer $7\le n\le300$, suggesting the stronger statement $\delta_n>0$ for every $n\ge7$.

\begin{rmk}
Continuing the expansion by one more order gives
\[
\delta_n
=
\frac{\pi^2}{2n^4}
-
\frac{9\pi^2}{4n^5}
+
\left(
\frac{\pi^4}{12}
-
\frac{27\pi^2}{4}
\right)\frac1{n^6}
+
O\bigl(n^{-7}\bigr).
\]
The relatively large negative correction terms explain why the
leading asymptotic term is still a poor numerical approximation at
$n=7$.
\end{rmk}

\begin{nbblock}
\section{Hamiltonian cycles with chords: counterexamples for all $n\ge5$}\label{s:chords}

In this section the vertices are $1,\dots,n$, indices are taken modulo~$n$, $n\ge3$,
and $Q$ is the cyclic permutation matrix from the proof of Corollary~\ref{c:circ}.
Put $\alpha=2\pi/n$, so that $\mu=e^{-i\alpha}$.
For $\omega=(\omega_1,\dots,\omega_n)\in[0,1]^n$ and $D_\omega=\operatorname{diag}(\omega_1,\dots,\omega_n)$, let
\begin{equation}\label{e:Lw}
\LT(\omega)=\frac1n\Bigl((I-Q)+D_\omega\bigl(I-Q^2\bigr)\Bigr).
\end{equation}

Since $n\ge3$, the matrices $Q$ and $Q^2$ have disjoint off-diagonal supports, so
$\LT(\omega)\in\cL_n$: it is the standardized Laplacian of the weighted digraph formed by
the Hamiltonian cycle $1\to2\to\dots\to n\to1$ with arc weights~$1$ and the chords
$k\to k+2$ with weights $\omega_k$ (chords of weight $0$ being absent), with $b=1$.

For $\omega=0$, the vector $(1,\mu,\dots,\mu^{n-1})^{\mathsf T}$ is an eigenvector of
$\LT(0)=\frac1n(I-Q)$ with eigenvalue $g_1=\lambda_1$ ($g_m$ are the generators \eqref{e:g_m} of $S_n$), a vertex of~$S_n$.
If all $\omega_k$ are equal to some $\tau\in[0,1]$, then $\LT(\omega)$ is circulant and, by the proof
of Corollary~\ref{c:circ}, this eigenvalue becomes $\lambda_1+\tau g_2\in[\lambda_1,\lambda_2]$:
it moves along the edge $[\lambda_1,\lambda_2]$ and reaches $\lambda_2$ at $\tau=1$.
We show that suitable nonconstant weights can push it beyond the supporting line of this edge, the effect being of second order in the size of the weights.

The position of a point relative to the edge is measured by
\[
d(z)=\operatorname{Re}\bigl(-\bar\mu\,(z-\lambda_1)\bigr),\qquad z\in\C .
\]

\begin{lemma}\label{l:edge}
The line through $\lambda_1$ and $\lambda_2$ is the supporting line of $S_n$ with
outward normal~$-\mu$. Hence $d(z)$ is the signed distance from $z$ to this line,
positive on the exterior side, and $d(z)>0$ implies $z\notin S_n$.
\end{lemma}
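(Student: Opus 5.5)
The plan is to compute the support function of $S_n$ in the direction $-\mu$ from \eqref{e:hS}, or directly from the zonotope representation of Proposition~\ref{p:zono}, and then check that $\lambda_1$ and $\lambda_2$ both attain the maximum. Write $e^{i\theta}=-\mu=e^{i(\pi-\alpha)}$. By \eqref{e:hZ}, the maximum of $\operatorname{Re}(e^{-i\theta}z)$ over $S_n$ is attained at $\sum_m t_mg_m$ with $t_m=1$ when $\operatorname{Re}(e^{-i\theta}g_m)>0$ and $t_m=0$ when it is negative; generators with value $0$ are free. So the key step is to find the sign of $\operatorname{Re}(-\bar\mu g_m)$ for each $m=1,\dots,n-1$.

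By \eqref{e:gen}, we have $g_m=\frac2n\sin\frac{\pi m}n\,e^{i(\pi/2-\pi m/n)}$, and $-\bar\mu=e^{i(\pi+\alpha)}$. Hence
\[
\operatorname{Re}(-\bar\mu g_m)=\frac2n\sin\frac{\pi m}{n}\cos\Bigl(\frac{3\pi}{2}+\frac{2\pi}{n}-\frac{\pi m}{n}\Bigr)=\frac2n\sin\frac{\pi m}{n}\,\sin\frac{\pi(2-m)}{n}.
\]
As a check, $-\bar\mu g_m=\frac1n(\mu^{m-1}-\bar\mu)$, whose real part is $\frac1n\bigl(\cos\frac{2\pi(m-1)}n-\cos\frac{2\pi}n\bigr)$. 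This expression is positive for $m=1$, zero for $m=2$, and negative for $3\le m\le n-1$, since in that range the angle $\frac{2\pi(m-1)}{n}$ lies strictly between $\frac{2\pi}{n}$ and $2\pi-\frac{2\pi}{n}$. Therefore the maximizers are exactly the points $g_1+t_2g_2$ with $t_2\in[0,1]$, which is the segment $[\lambda_1,\lambda_2]$. Moreover $h_{S_n}(\theta)=\operatorname{Re}(-\bar\mu\lambda_1)$, and $d$ vanishes on this line.

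It then follows that $d(z)=\operatorname{Re}(e^{-i\theta}z)-h_{S_n}(\theta)$. Since $|e^{i\theta}|=1$, this is the signed distance from $z$ to the supporting line, with the positive side pointing along the outward normal $-\mu$. By the half-plane description of $S_n$ given in Section~\ref{s:polygzono}, $d(z)>0$ implies $z\notin S_n$. The only delicate point is the sign bookkeeping in the cosine computation. For $n=3$ the range $3\le m\le n-1$ is empty, and the lemma still holds because $\lambda_2=1$.
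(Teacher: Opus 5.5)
Your proof is correct and follows essentially the same route as the paper: take $e^{i\theta}=-\mu$, use $-\bar\mu g_m=\frac1n(\mu^{m-1}-\bar\mu)$ to get the sign pattern (positive for $m=1$, zero for $m=2$, negative for $3\le m\le n-1$), and read off $h_{S_n}(\theta)=\operatorname{Re}(e^{-i\theta}\lambda_1)$ from \eqref{e:hZ} and Proposition~\ref{p:zono}. Your polar-form computation and the explicit identification of the exposed face as $[\lambda_1,\lambda_2]$ are harmless additions.
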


\begin{pf}
Let $e^{i\theta}=-\mu$. Since $-\bar\mu g_m=\frac1n\bigl(\mu^{m-1}-\bar\mu\bigr)$,
\[
\operatorname{Re}\bigl(e^{-i\theta}g_m\bigr)=\frac1n\bigl(\cos(m-1)\alpha-\cos\alpha\bigr),
\]
which is positive for $m=1$, zero for $m=2$, and negative for $3\le m\le n-1$, because then
$2\le m-1\le n-2$. By~\eqref{e:hZ} and Proposition~\ref{p:zono},
$h_{S_n}(\theta)=\operatorname{Re}(e^{-i\theta}g_1)=\operatorname{Re}(e^{-i\theta}\lambda_1)$,
and the supporting line $\{z:\operatorname{Re}(e^{-i\theta}z)=h_{S_n}(\theta)\}$ contains
$\lambda_1$ and $\lambda_2=\lambda_1+g_2$.
\qed\end{pf}

For $\omega\in\R^n$ put
\[
\hat\omega(j)=\frac1n\sum_{k=1}^n \omega_k\,\mu^{j(k-1)},\qquad
\bar\omega=\hat\omega(0),\qquad
\gamma(\omega)=\frac1n\sum_{k=1}^n \omega_k\omega_{k+1}-\bar\omega^{\,2}.
\]
Thus $\gamma(\omega)$ is the cyclic autocovariance of the weight sequence at lag~$1$.
Since $\omega$ is real, $\hat\omega(-j)=\overline{\hat\omega(j)}$.

\begin{lemma}\label{l:pert}
Let $\omega\in[0,1]^n$. There exist $\varepsilon>0$ and a function $\lambda(t)$, analytic for
$|t|<\varepsilon$, such that $\lambda(0)=\lambda_1$ and $\lambda(t)$ is an eigenvalue of\,
$\LT(t\omega)$\, for\, $0\le t<\varepsilon$. Moreover,
\begin{equation}\label{e:pert}
\lambda(t)=\lambda_1+t\,\bar\omega\,g_2+t^2\beta+O(t^3),
\qquad
d\bigl(\lambda(t)\bigr)=\frac2n\sin^2\!\frac{2\pi}{n}\;\gamma(\omega)\,t^2+O(t^3),
\end{equation}
where
\begin{equation}\label{e:beta}
\operatorname{Re}\bigl(-\bar\mu\beta\bigr)
=\frac2n\sin^2\!\alpha\sum_{j=1}^{n-1}\cos(j\alpha)\,|\hat\omega(j)|^2
=\frac2n\sin^2\!\alpha\;\gamma(\omega).
\end{equation}
\end{lemma}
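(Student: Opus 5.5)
The plan is to treat $\LT(t\omega)=\LT(0)+tA$, with $A=\frac1n D_\omega(I-Q^2)$, as an analytic perturbation of the circulant $\LT(0)=\frac1n(I-Q)$, and to compute the first two Taylor coefficients of the eigenvalue branch in the Fourier basis.

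\textbf{Existence of the branch.} Let $f_j=(\mu^{j(k-1)})_{k=1}^n$ for $j\in\mathbb Z/n$. Then $Qf_j=\mu^jf_j$, so $\LT(0)f_j=g_jf_j$ with $g_0:=0$. The numbers $g_j=\frac1n(1-\mu^j)$ are pairwise distinct. Hence $\lambda_1=g_1$ is a simple eigenvalue, and standard analytic perturbation theory for a simple eigenvalue gives an analytic branch $\lambda(t)$ with $\lambda(0)=\lambda_1$. Since the $f_j/\sqrt n$ form an orthonormal eigenbasis, the Rayleigh--Schr\"odinger formulas apply with $A_{mj}$ denoting the $f_m$-coordinate of $Af_j$:
\[
\lambda(t)=g_1+tA_{11}+t^2\sum_{m\ne1}\frac{A_{1m}A_{m1}}{g_1-g_m}+O(t^3).
\]

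\textbf{Matrix entries.} With the given normalization of $\hat\omega$, Fourier inversion reads $\omega_k=\sum_j\hat\omega(j)\mu^{-j(k-1)}$. Therefore
\[
D_\omega f_m=\sum_j\hat\omega(j)f_{m-j},\qquad
Af_m=g_{2m}\sum_j\hat\omega(j)f_{m-j},
\]
where $g_{2m}=\frac1n(1-\mu^{2m})$ with the index taken mod $n$. This gives three facts:
\begin{itemize}
\item $A_{11}=g_2\hat\omega(0)=\bar\omega g_2$;
\item $A_{m1}=g_2\hat\omega(1-m)$;
\item $A_{1m}=g_{2m}\hat\omega(m-1)$.
\end{itemize}
Using $\hat\omega(-j)=\overline{\hat\omega(j)}$ and substituting $j=m-1$, we get
\[
\beta=\sum_{j=1}^{n-1}\frac{g_2\,g_{2j+2}}{g_1-g_{j+1}}\,|\hat\omega(j)|^2 .
\]
This yields the first expansion in \eqref{e:pert}. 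Since $g_2$ is the direction of the edge, $d(\lambda_1+t\bar\omega g_2)=0$ by Lemma~\ref{l:edge}. Hence $d(\lambda(t))=t^2\operatorname{Re}(-\bar\mu\beta)+O(t^3)$, and the second expansion in \eqref{e:pert} follows once \eqref{e:beta} is established.

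\textbf{The trigonometric identity (main obstacle).} The first equality in \eqref{e:beta} requires, for each $1\le j\le n-1$,
\[
\operatorname{Re}\Bigl(-\bar\mu\,\frac{g_2g_{2j+2}}{g_1-g_{j+1}}\Bigr)=\frac2n\sin^2\alpha\,\cos(j\alpha).
\]
Here $\frac{g_2g_{2j+2}}{g_1-g_{j+1}}=\frac1n\frac{(1-\mu^2)(1-\mu^{2j+2})}{\mu^{j+1}-\mu}$. I would evaluate it by factoring out half-angle exponentials:
\begin{itemize}
\item $1-\mu^2=2i\sin\alpha\,e^{-i\alpha}$;
\item $1-\mu^{2j+2}=2i\sin((j+1)\alpha)\,e^{-i(j+1)\alpha}$;
\item $\mu^{j+1}-\mu=-2i\sin(j\alpha/2)\,e^{-i(j+2)\alpha/2}$.
\end{itemize}
I would then simplify the resulting real-valued combination using product-to-sum formulas. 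This is the step I expect to be most delicate. The difficulty is not conceptual: the individual terms need not be real, and what must be checked is that the real part of each term collapses to $\frac2n\sin^2\alpha\cos(j\alpha)$. Care is also needed with the indices $j$ for which $2j+2\equiv0 \pmod n$; there $g_{2j+2}=0$, and the identity must still hold, which requires $\cos(j\alpha)=0$ or a cancellation to be verified separately. If the termwise identity fails, I would fall back to pairing the terms $j$ and $n-j$, which carry the same weight $|\hat\omega(j)|^2$, and check the identity for each pair.

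\textbf{Parseval step.} For the second equality in \eqref{e:beta}, cyclic Parseval applied to the autocorrelation gives
\[
\sum_{j=0}^{n-1}\mu^{-j}|\hat\omega(j)|^2=\frac1n\sum_k\omega_k\omega_{k+1}.
\]
The sign of the exponent is immaterial after taking real parts. Taking real parts and removing the $j=0$ term $\bar\omega^2$ leaves $\sum_{j=1}^{n-1}\cos(j\alpha)|\hat\omega(j)|^2=\gamma(\omega)$.
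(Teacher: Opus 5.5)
Your route is the paper's route, step by step. You diagonalize $\LT(0)$ in the Fourier basis, use the second-order formula with $A_{1m}A_{m1}$, and arrive at the same expression for $\beta$ and the same Parseval computation for $\gamma(\omega)$. (The paper justifies the analytic branch and its expansion by a Schur complement and the implicit function theorem; your appeal to standard perturbation theory for a simple eigenvalue amounts to the same thing.)

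The one step that fails as you formulate it is the ``main obstacle'': the termwise identity you say is required is false. Carrying out your half-angle factorizations gives
\[
-\bar\mu\,\frac{g_2\,g_{2j+2}}{g_1-g_{j+1}}
=\frac{2i}{n}\,\frac{\sin\alpha\,\sin(j+1)\alpha}{\sin(j\alpha/2)}\,e^{-ij\alpha/2}.
\]
Its real part is
\[
\frac2n\sin\alpha\,\sin(j+1)\alpha=\frac2n\bigl(\sin^2\alpha\cos j\alpha+\sin\alpha\cos\alpha\sin j\alpha\bigr),
\]
not $\frac2n\sin^2\alpha\cos j\alpha$. For example, for $j=n-1$ the term vanishes because $g_{2n}=0$, while your target value is $\frac2n\sin^2\alpha\cos\alpha\ne0$ for $n\ne4$. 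So the first equality in \eqref{e:beta} does not hold term by term, and your fallback is not optional; it is exactly the paper's step. Since $|\hat\omega(n-j)|=|\hat\omega(j)|$, you may replace $\sin(j+1)\alpha$ by $\frac12\bigl(\sin(j+1)\alpha+\sin(1-j)\alpha\bigr)=\sin\alpha\cos j\alpha$. In other words, the part $\sin\alpha\cos\alpha\sin j\alpha$, which is odd under $j\mapsto n-j$, cancels between the terms $j$ and $n-j$. With this, your concern about indices with $2j+2\equiv0\pmod n$ disappears, since $\sin(j+1)\alpha=0$ there automatically.
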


\begin{pf}
For $m=0,\dots,n-1$ let $f_m=(\mu^{m(k-1)})_{k=1}^n$. Then $Qf_m=\mu^mf_m$, and the matrix
$F=[f_0,\dots,f_{n-1}]$ satisfies $F^{-1}=\frac1nF^{*}$. Since
$(D_\omega f_m)_k=\omega_k\mu^{m(k-1)}$, we have $D_\omega f_m=\sum_{l}\hat\omega(m-l)f_l$. Hence
\[
F^{-1}\LT(t\omega)F=R+tE,\qquad R=\operatorname{diag}(\kappa_0,\dots,\kappa_{n-1}),\quad
\kappa_m=\frac{1-\mu^m}{n},\quad
E=(e_{lm}),\quad e_{lm}=\frac{1-\mu^{2m}}{n}\,\hat\omega(m-l).
\]
The numbers $\kappa_m$ are pairwise distinct and $\kappa_1=g_1=\lambda_1$. Write $R'$ and
$E'$ for the principal submatrices of $R$ and $E$ indexed by $\{0,2,3,\dots,n-1\}$, and
$e_{1\ast}$, $e_{\ast1}$ for the rest of row and column~$1$ of~$E$ in these positions. By the
Schur complement formula, whenever $R'+tE'-\lambda I$ is invertible,
\[
\det(R+tE-\lambda I)=\det(R'+tE'-\lambda I)\,\varphi(\lambda,t),
\quad
\varphi(\lambda,t)=\kappa_1+te_{11}-\lambda-t^2e_{1\ast}\bigl(R'+tE'-\lambda I\bigr)^{-1}e_{\ast1}.
\]
The function $\varphi$ is analytic near $(\kappa_1,0)$, $\varphi(\kappa_1,0)=0$, and
$\partial\varphi/\partial\lambda(\kappa_1,0)=-1$. By the implicit function theorem there
is an analytic $\lambda(t)$ with $\lambda(0)=\kappa_1$ and $\varphi(\lambda(t),t)=0$; since
$\kappa_1$ is not an eigenvalue of $R'$, the first factor does not vanish near
$(\kappa_1,0)$, so $\lambda(t)$ is an eigenvalue of $\LT(t\omega)$. Expanding $\varphi$ gives
\[
\lambda(t)=\kappa_1+t\,e_{11}+t^2\sum_{m\ne1}\frac{e_{1m}e_{m1}}{\kappa_1-\kappa_m}+O(t^3).
\]

Here $e_{11}=\frac1n(1-\mu^2)\hat\omega(0)=\bar\omega\,g_2$. For $m\ne1$ put $j=m-1$, so that $j$
runs over $1,\dots,n-1$ modulo~$n$. Then
$e_{1m}e_{m1}=\frac1{n^2}(1-\mu^2)(1-\mu^{2j+2})\,|\hat\omega(j)|^2$ and
$\kappa_1-\kappa_m=-\frac1n\mu\,(1-\mu^{j})$. Therefore
\[
-\bar\mu\beta=\frac1n\sum_{j=1}^{n-1}
\frac{\bar\mu^{\,2}(1-\mu^2)(1-\mu^{2j+2})}{1-\mu^{j}}\,|\hat\omega(j)|^2 .
\]
Using $1-e^{-i\varphi}=2i\sin\frac\varphi2\,e^{-i\varphi/2}$, we find that the $j$th fraction
equals $2i\,\dfrac{\sin\alpha\,\sin(j+1)\alpha}{\sin(j\alpha/2)}\,e^{-ij\alpha/2}$, whose
real part is $2\sin\alpha\,\sin(j+1)\alpha$. Since $|\hat\omega(n-j)|=|\hat\omega(j)|$, we may
replace $\sin(j+1)\alpha$ by
$\frac12\bigl(\sin(j+1)\alpha+\sin(1-j)\alpha\bigr)=\sin\alpha\cos j\alpha$, which gives
the first equality in~\eqref{e:beta}. For the second,
\[
\sum_{j=0}^{n-1}\mu^{j}\,|\hat\omega(j)|^2
=\frac1{n^2}\sum_{k,l}\omega_k\omega_l\sum_{j=0}^{n-1}\mu^{j(k-l+1)}
=\frac1n\sum_{k=1}^n \omega_k\omega_{k+1},
\]
and taking real parts and subtracting the term $j=0$, which is $\bar\omega^{\,2}$, yields
$\sum_{j=1}^{n-1}\cos(j\alpha)|\hat\omega(j)|^2=\gamma(\omega)$.

Finally, $d(\lambda_1+u)=\operatorname{Re}(-\bar\mu u)$ is real linear in $u$ and vanishes
for $u=\bar\omega\,g_2$ by Lemma~\ref{l:edge}, so $d(\lambda(t))=t^2\operatorname{Re}(-\bar\mu\beta)+O(t^3)$.
\qed\end{pf}

\begin{rmk}\label{r:gamma}
Only the harmonics $j$ with $\cos(2\pi j/n)>0$, i.e., $\min(j,n-j)<n/4$, push the
eigenvalue outward. For $n\le4$ there is no such harmonic with $j\ne0$. In fact,
$\gamma(\omega)=-\frac1{18}\sum_k(\omega_k-\omega_{k+1})^2$ for $n=3$, and
$\gamma(\omega)=-\frac1{16}(\omega_1-\omega_2+\omega_3-\omega_4)^2$ for $n=4$. Thus for $n\le4$ the second-order term
never points outward. This is consistent with $\Lambda_3=S_3$~\cite{AgaChe05LAA}.
For $n\ge5$, by contrast, $\gamma(\omega)>0$ for every $\omega$ that is a nonzero multiple of
$e_k+e_{k+1}$, i.e., for two consecutive chords of equal weight.
\end{rmk}

For $n\ge4$ and $0\le t\le1$, let $H_n(t)$ be the weighted digraph formed by the Hamiltonian
cycle $1\to2\to\dots\to n\to1$ with arc weights~$1$ and the two chords $1\to3$ and $2\to4$
of weight~$t$. Its standardized Laplacian is $\LT(t(e_1+e_2))$.

\begin{theorem}\label{t:allfive}
Let $n\ge5$. The eigenvalue $\lambda(t)$ of $\LT(H_n(t))$ with $\lambda(0)=\lambda_1$
\textup{(}Lemma~\textup{\ref{l:pert})} satisfies
\[
d\bigl(\lambda(t)\bigr)=\frac{2(n-4)}{n^3}\,\sin^2\!\frac{2\pi}{n}\;t^2+O(t^3),
\qquad t\to0.
\]
Consequently, $\lambda(t)\notin S_n$ for all sufficiently small $t>0$, and
$S_n\subsetneq\Lambda_n$ for every $n\ge5$.
\end{theorem}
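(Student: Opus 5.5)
This is a direct specialization of Lemma~\ref{l:pert} to $\omega=e_1+e_2$, followed by the elementary sign argument of Lemma~\ref{l:edge}.

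\emph{Step 1: the perturbation expansion.} Take $\omega=e_1+e_2\in[0,1]^n$, so that $\LT(t\omega)=\LT(H_n(t))$. Lemma~\ref{l:pert} then gives an analytic eigenvalue branch $\lambda(t)$ with $\lambda(0)=\lambda_1$ and
\[
d(\lambda(t))=\tfrac2n\sin^2\tfrac{2\pi}{n}\,\gamma(\omega)\,t^2+O(t^3).
\]

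\emph{Step 2: computing $\gamma(\omega)$.} We have $\bar\omega=\frac2n$. In the cyclic sum $\sum_k\omega_k\omega_{k+1}$, only the term $k=1$ is nonzero, provided $n\ge3$: the term $k=2$ involves $\omega_3=0$, and the term $k=n$ involves $\omega_n=0$. So the sum equals $1$. Hence
\[
\gamma(\omega)=\frac1n-\frac4{n^2}=\frac{n-4}{n^2}.
\]
Substituting this into Step 1 gives the stated coefficient $\frac{2(n-4)}{n^3}\sin^2\frac{2\pi}n$. This coefficient is strictly positive for $n\ge5$, since $\sin\frac{2\pi}{n}\neq0$ for $n\ge3$.

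\emph{Step 3: leaving $S_n$.} Because the $t^2$ coefficient is positive, $d(\lambda(t))>0$ for all sufficiently small $t>0$, as the $O(t^3)$ remainder is dominated. By Lemma~\ref{l:edge}, $d(z)>0$ implies $z\notin S_n$. Thus $\lambda(t)\notin S_n$.

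\emph{Step 4: the conclusion $S_n\subsetneq\Lambda_n$.} For $0\le t\le1$, $\LT(H_n(t))\in\cL_n$, since $Q$ and $Q^2$ have disjoint off-diagonal supports. Hence $\lambda(t)\in\Lambda_n\setminus S_n$. Together with $S_n\subseteq\Lambda_n$ (Corollary~\ref{c:circ}), this yields the strict inclusion.

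There is no real obstacle here. The only point needing care is the cyclic indexing in $\gamma$: one must check that the two chords are adjacent exactly once in the cyclic sum, which holds because $n\ge3$, so the terms $k=2$ and $k=n$ do not wrap around onto the support.
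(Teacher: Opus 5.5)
Your proposal is correct and follows essentially the same route as the paper: specialize Lemma~\ref{l:pert} to $\omega=e_1+e_2$, compute $\bar\omega=2/n$ and $\sum_k\omega_k\omega_{k+1}=1$ to get $\gamma(\omega)=\frac{n-4}{n^2}>0$, and conclude with Lemma~\ref{l:edge}. Your explicit checks of the cyclic indexing and of $\LT(H_n(t))\in\cL_n$ only spell out what the paper leaves implicit.
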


\begin{pf}
For $\omega=e_1+e_2$ we have $\bar\omega=2/n$ and $\sum_k\omega_k\omega_{k+1}=1$, so
$\gamma(\omega)=\frac1n-\frac4{n^2}=\frac{n-4}{n^2}>0$. The claim follows from
Lemma~\ref{l:pert} and Lemma~\ref{l:edge}.
\qed\end{pf}

By Remark~\ref{r:sym}, the points $\bar\lambda(t)$, $1-\lambda(t)$, and $1-\bar\lambda(t)$
also belong to $\Lambda_n\setminus S_n$; they lie beyond the edges $[\bar\lambda_1,\bar\lambda_2]$,
$[\bar\lambda_{n-3},\bar\lambda_{n-2}]$, and $[\lambda_{n-3},\lambda_{n-2}]$, respectively.
The same applies to Theorems~\ref{t:counter} and~\ref{t:five}.

Together with $\Lambda_n=S_n$ for $n\le3$~\cite{AgaChe05LAA}, Theorem~\ref{t:allfive}
leaves $n=4$ as the only order for which the conjecture is undecided.

Theorem~\ref{t:allfive} is purely existential in~$t$. The following example, certified in
\ref{a:cert5}, shows that the effect is not small.

\begin{theorem}\label{t:five}
With $x=10\lambda$,
\[
\det\Bigl(\lambda I-\LT\bigl(H_5\bigl(\tfrac12\bigr)\bigr)\Bigr)
=10^{-5}\,x\,\bigl(x^4-12x^3+57x^2-134x+140\bigr),
\]
and $\LT\bigl(H_5\bigl(\frac12\bigr)\bigr)$ has the eigenvalue
\[
\lambda^\sharp=0.2006968422057107\ldots+0.2121354840243571\ldots\,i
\]
lying beyond the edge $[\lambda_1,\lambda_2]$ of $S_5$, at distance
$d(\lambda^\sharp)=1.5374983\ldots\cdot10^{-3}$ from~$S_5$. In particular,
$S_5\subsetneq\Lambda_5$.
\end{theorem}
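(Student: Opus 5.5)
The proof has three parts: compute the characteristic polynomial exactly, isolate one root rigorously, and evaluate the functional $d$ of Lemma~\ref{l:edge} at that root with certified bounds. All of this is exact arithmetic followed by an interval certification, in the spirit of \ref{a:certification}.

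\emph{Step 1: the polynomial.} For $n=5$ the matrix $\LT\bigl(H_5(\tfrac12)\bigr)=\frac15\bigl((I-Q)+D_\omega(I-Q^2)\bigr)$ with $\omega=\tfrac12(e_1+e_2)$ is explicit. Rows $1$ and $2$ have off-diagonal entries $-\frac15$ (cycle arc) and $-\frac1{10}$ (chord), with diagonal $\frac3{10}$. Rows $3,4,5$ have only the cycle entry $-\frac15$ and diagonal $\frac15$. Scaling by $10$ gives the integer matrix $M=10\LT$, with diagonal $(3,3,2,2,2)$ and entries $-2$ on the cycle arcs and $-1$ on the chords $1\to3$, $2\to4$. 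Then
\[
\det(\lambda I-\LT)=10^{-5}\det(xI-M),\qquad x=10\lambda .
\]
I will compute $\det(xI-M)$ by cofactor expansion. Vertices $3,4,5$ form a path ending back at $1$, which keeps the expansion short. Zero row sums give the factor $x$. The remaining quartic must match $x^4-12x^3+57x^2-134x+140$. A quick check is the trace: $3+3+2+2+2=12$, which agrees with the $x^3$ coefficient. The product of the nonzero eigenvalues should be $140$, which also equals $5\cdot$(the number of weighted spanning in-trees) by the matrix-tree theorem.

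\emph{Step 2: isolating the root.} I will choose a floating-point approximation $c\approx 10\lambda^\sharp\approx 2.0069684+2.1213548\,i$. To certify a true zero $x^\sharp$ of the quartic $p$ within radius $r$ (say $10^{-15}$) of $c$, I will use one of two tools.
\begin{itemize}
\item Rouché's theorem on the circle $|x-c|=r$: compare $p$ with its linearization $p(c)+p'(c)(x-c)$ and bound the remainder through the Taylor coefficients $p''(c)/2$, $p'''(c)/6$, $1$.
\item Equivalently, a Kantorovich/Krawczyk test in rational interval arithmetic.
\end{itemize}
Since $p$ is real, the conjugate root comes for free. Simple roots are confirmed by $p'(c)\neq0$.

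\emph{Step 3: distance to $S_5$.} With $\mu=e^{-2\pi i/5}$, the functional becomes
\[
d(\lambda)=\operatorname{Re}\bigl(-\bar\mu(\lambda-\lambda_1)\bigr)=\tfrac1{5}\bigl(\operatorname{Re}\bigl(\bar\mu(1-5\lambda)\bigr)-1\bigr).
\]
In terms of $x$ this is $\tfrac15\bigl(\operatorname{Re}(\bar\mu(1-x/2))-1\bigr)$. It is $1$-Lipschitz in $\lambda$, so the radius-$r$ enclosure of $x^\sharp$ changes $d$ by at most $r/10$. I will evaluate it at $c$, using the exact value $\cos\frac{2\pi}5=\frac{\sqrt5-1}4$ (with the matching exact sine) to keep the arithmetic rigorous. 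This should give $d(\lambda^\sharp)=1.5374983\ldots\cdot10^{-3}>0$.

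Positivity of $d$ alone gives $\lambda^\sharp\notin S_5$ by Lemma~\ref{l:edge}, and hence $S_5\subsetneq\Lambda_5$. To show that this number is the distance to $S_5$, not merely the distance to the supporting line, I also need the orthogonal projection of $\lambda^\sharp$ onto the line to fall strictly inside the segment $[\lambda_1,\lambda_2]$. Writing the projection as $\lambda_1+s g_2$, this means checking $0<s<1$, where
\[
s=\operatorname{Re}\bigl((\lambda^\sharp-\lambda_1)\bar g_2\bigr)/|g_2|^2 .
\]
If that holds, the nearest point of the convex set $S_5$ is this projection.

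\emph{Main obstacle.} The only real difficulty is rigor, not conceptual. The determinant in Step 1 is routine but error-prone. The step that most needs care is the certification of the root enclosure and of the sign of $d$ with exact arithmetic and explicit error bounds. I would handle this by the Rouché bound above, worked out entirely in rationals together with $\sqrt5$.
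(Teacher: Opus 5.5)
Your proposal is correct and is essentially the paper's proof. The paper also expands the characteristic polynomial of the integer matrix $10\LT=2L$ directly, encloses the root by Rouch\'e (with $r=10^{-10}$), bounds $\cos\frac{2\pi}5,\sin\frac{2\pi}5$ rationally via $\sqrt5$, uses the $\frac1{10}$-Lipschitz estimate for $d$ in $x$, and checks that the projection parameter lies in $(0.174,0.175)\subset(0,1)$. One side remark is wrong, though harmless since it is only a sanity check: by the directed matrix-tree theorem, $140$ is the sum over the five roots of the weighted in-tree counts (with the arc weights $2$ and $1$ of $10\LT$), which here are $24,16,28,36,36$, not five times a common count.
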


\begin{pf}
Since $\LT\bigl(H_5\bigl(\frac12\bigr)\bigr)=\frac15L\bigl(H_5\bigl(\frac12\bigr)\bigr)$, the matrix
$10\,\LT\bigl(H_5\bigl(\frac12\bigr)\bigr)=2L\bigl(H_5\bigl(\frac12\bigr)\bigr)$ has integer entries, and its
characteristic polynomial $x\bigl(x^4-12x^3+57x^2-134x+140\bigr)$ is obtained by direct expansion. The enclosure of its zero, the bound for $d$, and the position of the foot of the
perpendicular are certified in~\ref{a:cert5}.
\qed\end{pf}

The distance in Theorem~\ref{t:five} is about $7.3$ times the distance in
Theorem~\ref{t:counter}. Optimizing the chord weights in~\eqref{e:Lw} numerically increases it
for $n=5$ to about $2.5\cdot10^{-3}$, attained at
$\omega\approx(0.731,\,0.344,\,0,\,0,\,0.344)$. \nv{Liu~\cite[Theorem~5]{Liu026} gives explicit digraphs of this kind with integer weights: the Hamiltonian cycle with arc weights~$12$ and the chords $0\to2$, $1\to3$, $2\to4$ of weights $4$, $9$, $4$; for $n=5$ and $n=6$, the eigenvalue lies at distances $2.466\ldots\cdot10^{-3}$ and $1.791\ldots\cdot10^{-3}$ beyond $[\lambda_1,\lambda_2]$, certified in ball arithmetic.}

\begin{rmk}[relation to $\Gamma_n$]\label{r:path}
At $t=1$, the digraph $H_n(1)$ is unweighted, and the spectral construction of Section~\ref{s:counter} reappears.
Indeed, write $\LT=\LT(H_n(1))$. Then $n(\LT x)_1=2x_1-x_2-x_3$, $n(\LT x)_2=2x_2-x_3-x_4$, and $n(\LT x)_3=x_3-x_4$. Hence the
subspace $\{x:x_2=x_3\}$ is invariant, and identifying $x_2=x_3$ yields the weighted
$(n-1)$-cycle $1\to\{2,3\}\to4\to\dots\to n\to1$ with $D=\frac1n\operatorname{diag}(2,1,\dots,1)$.
This is the quotient that appears in the proof of Lemma~\ref{l:charpoly}. Accordingly,
\[
\det\bigl(\lambda I-\LT(H_n(1))\bigr)
=\frac{(-1)^{n-1}}{n^{\,n-1}}\Bigl(\lambda-\frac2n\Bigr)\Bigl((1-y)^{n-2}(2-y)-2\Bigr),
\qquad y=n\lambda,
\]
so $\LT(H_n(1))$ and $\LT_n$ have the same eigenvalues except that $\frac1n$ is replaced
by~$\frac2n$. Numerically, for $5\le n\le20$, the branch $\lambda(t)$ of
Theorem~\ref{t:allfive}, continued along $0\le t\le1$, ends at the eigenvalue $\lambda_n^\ast$
of Section~\ref{s:counter}. For $7\le n\le20$, it stays outside $S_n$ for all
$0<t\le1$. For $n=5,6$, it leaves $S_5$, respectively $S_6$, and returns:
$d(\lambda(1))=\delta_5=-1.99\cdot10^{-3}$ and $\delta_6=-2.80\cdot10^{-4}$. Thus the counterexample $\Gamma_7$ is the endpoint of a path of
eigenvalues that starts at the vertex $\lambda_1$ tangentially to the edge
$[\lambda_1,\lambda_2]$ and leaves the line of this edge only quadratically.
\end{rmk}
\subsection{The other edges}\label{ss:edges}

The construction extends to the edge $[\lambda_m,\lambda_{m+1}]$ for every
$1\le m\le n-2$. Put $p=m+1$, $\nu_m=-e^{-i\pi p/n}$, and
\[
d_m(z)=\operatorname{Re}\bigl(\bar\nu_m(z-\lambda_m)\bigr),\qquad z\in\C ,
\]
so that $\nu_1=-\mu$ and $d_1=d$.

\begin{lemma}\label{l:edgem}
The line through $\lambda_m$ and $\lambda_{m+1}$ is the supporting line of $S_n$ with
outward normal $\nu_m$. Hence $d_m(z)>0$ implies $z\notin S_n$.
\end{lemma}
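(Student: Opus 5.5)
I will mimic the proof of Lemma~\ref{l:edge}, using the support-function formula~\eqref{e:hZ} together with the zonotope representation of Proposition~\ref{p:zono}. Write $\nu_m=e^{i\theta}$, so that $e^{-i\theta}=\bar\nu_m=-e^{i\pi p/n}$. Setting $\alpha=2\pi/n$ as before, the plan is to compute $\operatorname{Re}(\bar\nu_m g_k)$ for each generator $g_k$ and determine its sign.

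For the computation I will use the polar form~\eqref{e:gen}, $g_k=\frac2n\sin\frac{\pi k}{n}\,e^{i(\pi/2-\pi k/n)}$. This gives
\[
\bar\nu_m g_k=-\frac2n\sin\frac{\pi k}{n}\,e^{i(\pi/2+\pi(p-k)/n)},
\qquad
\operatorname{Re}\bigl(\bar\nu_m g_k\bigr)=\frac2n\sin\frac{\pi k}{n}\,\sin\frac{\pi(p-k)}{n}.
\]
For $1\le k\le n-1$ the first factor $\sin\frac{\pi k}{n}$ is positive. The second factor is positive for $k<p$, zero for $k=p=m+1$, and negative for $p<k\le n-1$, since then $-(n-2)\le p-k<0$ and so $\pi(p-k)/n\in(-\pi,0)$. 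Equivalently, one can write $\operatorname{Re}(\bar\nu_m g_k)=\frac1n(\cos\frac{\pi(p-2k)}{n}-\cos\frac{\pi p}{n})$, which matches the $m=1$ case of Lemma~\ref{l:edge}. Either form is a routine check.

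By~\eqref{e:hZ} and Proposition~\ref{p:zono}, the support function in direction $\nu_m$ is then
\[
h_{S_n}(\theta)=\sum_{k=1}^{m}\operatorname{Re}\bigl(\bar\nu_m g_k\bigr)=\operatorname{Re}\bigl(\bar\nu_m\lambda_m\bigr),
\]
because $\lambda_m=g_1+\dots+g_m$. Since $\operatorname{Re}(\bar\nu_m g_{m+1})=0$, the point $\lambda_{m+1}=\lambda_m+g_{m+1}$ attains the same value. Hence both $\lambda_m$ and $\lambda_{m+1}$ lie on the supporting line $\{z:\operatorname{Re}(\bar\nu_m z)=h_{S_n}(\theta)\}$, and that line is therefore the line through them.

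Finally, $d_m(z)=\operatorname{Re}(\bar\nu_m z)-h_{S_n}(\theta)$, and since $|\nu_m|=1$ this is the signed distance to the line, positive on the side of the outward normal. Because $S_n$ is contained in the supporting half-plane $\{z:\operatorname{Re}(\bar\nu_m z)\le h_{S_n}(\theta)\}$, the condition $d_m(z)>0$ forces $z\notin S_n$.

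The only step needing care is the sign analysis of $\sin\frac{\pi(p-k)}{n}$ at the endpoint cases $k=n-1$ and $m=n-2$. Here $p-k\ge-(n-2)$ keeps the angle inside $(-\pi,0)$, so no boundary case breaks the argument.
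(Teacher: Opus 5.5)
Your proof is correct and is essentially the paper's argument: both compute $\operatorname{Re}(\bar\nu_m g_s)=|g_s|\sin\frac{\pi(p-s)}{n}$ from the polar form~\eqref{e:gen}, read off its sign pattern (positive for $s\le m$, zero for $s=p$, negative for $s>p$), and then use~\eqref{e:hZ} with Proposition~\ref{p:zono} to get $h_{S_n}=\operatorname{Re}(\bar\nu_m\lambda_m)$, a value also attained at $\lambda_{m+1}=\lambda_m+g_p$. Your bound $p-k\ge-(n-2)$ could be sharpened to $p-k\ge-(n-3)$, since $p\ge2$, but this does not affect the argument.
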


\begin{pf}
By~\eqref{e:gen}, $\arg g_s=\frac\pi2-\frac{\pi s}{n}$, so
$\operatorname{Re}(\bar\nu_m g_s)=|g_s|\sin\frac{\pi(p-s)}{n}$. This is positive for
$s\le m$, zero for $s=p$, and negative for $s\ge p+1$. By~\eqref{e:hZ} and
Proposition~\ref{p:zono}, $h_{S_n}$ in the direction $\nu_m$ equals
$\sum_{s\le m}\operatorname{Re}(\bar\nu_m g_s)=\operatorname{Re}(\bar\nu_m\lambda_m)$, and the
supporting line contains $\lambda_m$ and $\lambda_{m+1}=\lambda_m+g_p$.
\qed\end{pf}

For $\omega\in\R^n,$ let
\begin{equation}\label{e:Lmw}
\LT_m(\omega)=\frac1n\Bigl(\sum_{s=1}^{m}\bigl(I-Q^s\bigr)+D_\omega\bigl(I-Q^{p}\bigr)\Bigr).
\end{equation}
If $\omega\in[0,1]^n$, then $\LT_m(\omega)\in\cL_n$. Indeed, since $p\le n-1$, the matrices $Q,\dots,Q^{p}$ have disjoint off-diagonal supports. It is the standardized Laplacian of the digraph that contains all arcs $k\to k+s$, $1\le s\le m$, with weight~$1$, i.e., the union of the first $m$ powers of the Hamiltonian cycle, together with the chords $k\to k+p$ of weights~$\omega_k$. For $m=1$, this is~\eqref{e:Lw}. With $f_k$ as in the proof of Lemma~\ref{l:pert} and
\[
F_m(k)=\sum_{s=0}^{m}\mu^{sk},
\]
we have $\LT_m(0)f_k=\kappa_kf_k$, where $\kappa_k=\frac1n\bigl(p-F_m(k)\bigr)$, and
$\kappa_1=g_1+\dots+g_m=\lambda_m$. For constant weights $\omega_k=\tau$,
the eigenvalue at $f_1$ is $\lambda_m+\tau g_p\in[\lambda_m,\lambda_{m+1}]$.

\begin{lemma}\label{l:simple}
For $1\le m\le n-2$, $\lambda_m$ is a simple eigenvalue of $\LT_m(0)$.
\end{lemma}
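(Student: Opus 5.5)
Since $\LT_m(0)=\frac1n\sum_{s=1}^m(I-Q^s)$ is circulant, it is diagonalized by the Fourier basis $f_0,\dots,f_{n-1}$. Its spectrum is therefore exactly $\{\kappa_k: k=0,\dots,n-1\}$, with $\kappa_k=\frac1n\bigl(p-F_m(k)\bigr)$ and $F_m(k)=\sum_{s=0}^m\mu^{sk}$. Simplicity of $\lambda_m=\kappa_1$ thus amounts to showing $F_m(k)\ne F_m(1)$ for every $k\in\{0,2,3,\dots,n-1\}$. The plan is to compare these numbers geometrically.

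First, the case $k=0$ is immediate: $F_m(0)=p$, whereas $|F_m(1)|<p$ because the $p=m+1\le n-1$ distinct roots $\mu^0,\dots,\mu^m$ are not all equal. For $k\ne0$, sum the geometric series. Writing $\mu^k=e^{-ik\alpha}$, we get
\[
F_m(k)=\frac{\sin(p k\pi/n)}{\sin(k\pi/n)}\,e^{-i m k\pi/n}.
\]
The value at $k=1$ has argument $-m\pi/n$ and modulus $M_1=\sin(p\pi/n)/\sin(\pi/n)$. This modulus is positive because $1\le p\le n-1$.

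The cleanest route is to use the support-function computation of Lemma~\ref{l:edgem}, which avoids case analysis. Suppose the eigenvalue at $f_k$ equals $\lambda_m$. Then $\lambda_m$ is the eigenvalue of a circulant with the eigenvector $f_k$. Writing $\kappa_k=\sum_{s=1}^m\frac1n(1-\mu^{ks})$, this is a sum of $m$ generators $g_{ks\bmod n}$ of $S_n$, which need not be distinct. The case $k=1$ gives the first $m$ generators.

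Now take the direction $\nu_{m-1}$ (with $\nu_0$ interpreted appropriately), or better, a direction strictly between $\nu_{m-1}$ and $\nu_m$. In that direction, $\lambda_m$ is the \emph{unique} maximizer over $S_n$, and $\operatorname{Re}(\bar\nu g_s)>0$ exactly for $s\le m$. The sum $\sum_{s\le m}\operatorname{Re}(\bar\nu g_{\sigma(s)})$, with indices $\sigma(s)=ks\bmod n$ not all in $\{1,\dots,m\}$ or with repetition, is then strictly smaller than the sum of the $m$ positive terms.

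The one subtle point is repetition. The indices $ks \bmod n$ for $s=1,\dots,m$ may repeat, or some may equal $0$; a zero index gives $g_0=0$ and hence a nonpositive contribution. In every case a multiset of $m$ indices from $\{0,\dots,n-1\}$ that differs from $\{1,\dots,m\}$ gives a strictly smaller value of $\sum\operatorname{Re}(\bar\nu g_{\sigma(s)})$. This holds because the values $c_s=\operatorname{Re}(\bar\nu g_s)$, for $s=1,\dots,m$, are the $m$ largest positive values, and the remaining ones are $\le0$.

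The obstacle is that a multiset could repeat a positive index, for example using $g_1$ twice. To handle this, I would pick $\nu$ so that the $c_s$ are as equal as possible, but a direct modulus argument is more robust. If $F_m(k)=F_m(1)$, then $|\sin(pk\pi/n)/\sin(k\pi/n)|=M_1$. Since the Dirichlet-kernel modulus $|\sin(p x)/\sin x|$ on $x\in(0,\pi)$ attains its maximum only where $x$ is close to $0$ or $\pi$, this forces $k\in\{1,n-1\}$. For $k=n-1$, the value $F_m(n-1)=\overline{F_m(1)}$ differs from $F_m(1)$ because its argument $m\pi/n$ lies in $(0,\pi)$ and is not $0$.

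The main step to verify is therefore that $|\sin(p\pi j/n)|/\sin(\pi j/n)<\sin(p\pi/n)/\sin(\pi/n)$ for $2\le j\le n-2$. I would prove this with the standard bound: for $j\le n/2$, use $\sin(\pi j/n)\ge$ the appropriate chord estimate together with the monotonicity of the Fejér-type kernel. I would first attempt this via the identity $F_m(k)\overline{F_m(k)}=\sum_{|r|\le m}(p-|r|)\mu^{rk}$. By Lemma~\ref{l:edgem}'s convexity argument, its maximum over the nonzero $k$ occurs at $k=\pm1$.
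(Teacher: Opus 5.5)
Your reduction to $F_m(k)\ne F_m(1)$ for $k\not\equiv1$ is correct, and so are the case $k=0$, the closed form of $F_m(k)$ and the case $k=n-1$. The step you single out as the main one, however, is false at the top of the range. Since $\sin\frac{p\pi j}{n}=\pm\sin\frac{(n-p)\pi j}{n}$, the moduli $|F_m(k)|$ depend on $p$ only through $q=\min(p,n-p)$. For $m=n-2$ we have $q=1$, and indeed $F_{n-2}(k)=-\mu^{(n-1)k}=-\mu^{-k}$, so $|F_{n-2}(k)|=1$ for every $k\not\equiv0$. Your inequality therefore reads $1<1$ for every $2\le j\le n-2$; for example, for $n=4$, $m=2$ one gets $F_2(1)=-i$, $F_2(2)=1$, $F_2(3)=i$. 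For the same reason, your claim that the Fej\'er sum $\sum_{|r|\le m}(p-|r|)\mu^{rk}$ is uniquely maximized at $k=\pm1$ fails, and nothing in Lemma~\ref{l:edgem} supports it in any case. When $m=n-2$ the moduli carry no information, so you must compare the values themselves. The numbers $-\mu^{-k}$, $1\le k\le n-1$, are pairwise distinct, which settles this case at once.

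For $1\le m\le n-3$ (so $q\ge2$) the inequality is true, but your sketch does not prove it. It also cannot be proved from $p$ directly when $p>n/2$. Then $2\pi/n$ already lies outside the main lobe $(0,\pi/p)$, and the bound $1/\sin x$ is useless for $p$ near $n$: for $p=n-2$ the target value is $2\cos\frac\pi n<2$, while $1/\sin\frac{2\pi}n\to\infty$.

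A complete argument goes as follows.
\begin{itemize}
\item Replace $p$ by $q$, and $j$ by $n-j$ if necessary, so that $q\le n/2$ and $2\le j\le n/2$.
\item On $(0,\pi/q)$ the function $\sin(qx)/\sin x$ is positive and strictly decreasing, because $q\cos(qx)\sin x-\sin(qx)\cos x<0$ there. This handles $\pi j/n<\pi/q$.
\item For $\pi/q\le x=\pi j/n\le\pi/2$, Jordan's inequality gives $|\sin(qx)|/\sin x\le1/\sin(\pi/q)\le q/2<2q/\pi\le\sin(q\pi/n)/\sin(\pi/n)$.
\end{itemize}

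With these two repairs your route becomes a valid, self-contained trigonometric proof. It treats all $k$ alike and avoids the forward reference to Proposition~\ref{p:nested}. The paper instead splits by $\gcd(k,n)$. For coprime $k$ it uses the support function in a direction between $\nu_{m-1}$ and $\nu_m$, together with a combinatorial proof that $\{k,2k,\dots,mk\}\not\equiv\{1,\dots,m\}$. Otherwise it places $\kappa_k$ in $S_{n/\gcd(k,n)}$ and uses that this polygon shares no vertex other than $0,1$ with $S_n$. That split is exactly what handles the repeated residues that made you abandon your support-function route. Your claim there, that every multiset other than $\{1,\dots,m\}$ gives a strictly smaller value, is false as stated.
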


\begin{pf}
Let $k\not\equiv1$, $\delta=\gcd(k,n)$, and $n'=n/\delta$. Note that
$\kappa_k=\frac1n\sum_{s=1}^m\bigl(1-\mu^{sk}\bigr)$.

First let $\delta=1$. Then the residues $r_s\equiv sk \pmod n$, $1\le s\le m$, are distinct and
nonzero, and $\kappa_k=\sum_{s=1}^m g_{r_s}$. Choose $\theta$ with
$e^{i\theta}=-e^{-i\pi(m+1/2)/n}$. As in the proof of Lemma~\ref{l:edgem},
$\operatorname{Re}(e^{-i\theta}g_r)$ is positive for $r\le m$ and negative for $r>m$. Hence
$\operatorname{Re}(e^{-i\theta}\kappa_k)\le\operatorname{Re}(e^{-i\theta}\lambda_m)$, with equality
only if $\{r_1,\dots,r_m\}=\{1,\dots,m\}$. The latter is impossible. Indeed, let
$A=\{1,\dots,m\}$ be invariant under multiplication by $k$ modulo $n$. Then $k\in A$, so
$2\le k\le m$. If $2m<n$, let $j$ be maximal with $jk\le m$. Then $j+1\le m$, while
$m<(j+1)k\le 2m<n$, so $(j+1)k\notin A$, a contradiction. If $2m\ge n$, then the complement
$\{m+1,\dots,n-1\}$ is also invariant, and hence so is $A'=\{1,\dots,n-1-m\}$, its image
under $r\mapsto n-r$. Since $2(n-1-m)<n$, the same argument applied to $A'$ gives a
contradiction.

Now let $\delta>1$. If $k\equiv0$, then $\kappa_k=0\ne\lambda_m$. Otherwise, $\mu^k$ is a
primitive $n'$th root of unity. Grouping the indices $s$ by the residue of
$s\,k/\delta$ modulo~$n'$, and noting that each residue occurs at most
$\lceil m/n'\rceil\le\delta$ times, we obtain
$\kappa_k=\frac1{n'}\sum_{r=1}^{n'-1}\tau_r\bigl(1-e^{-2\pi ir/n'}\bigr)$ with
$\tau_r\in[0,1]$, i.e., $\kappa_k\in S_{n'}$ by Proposition~\ref{p:zono}. Since $\lambda_m$
is a vertex of $S_n$ and $S_{n'}\subseteq S_n$, the point $\lambda_m$ could belong to
$S_{n'}$ only as a vertex of $S_{n'}$. By Proposition~\ref{p:nested}(ii),(iii), no vertex
of $S_{n'}$ other than $0$ and $1$ is a vertex of $S_n$. Hence $\kappa_k\ne\lambda_m$.
\qed\end{pf}

\begin{proposition}[first-order stability]\label{p:first}
Let $1\le m\le n-2$, let $M\in\cL_n$, and let $\lambda(t)$ be the eigenvalue of
$(1-t)\LT_m(0)+tM\in\cL_n$ with $\lambda(0)=\lambda_m$ \textup{(}Lemma~\textup{\ref{l:simple})}.
Then
\[
\lambda_m+t\,\lambda'(0)\in S_n\qquad\text{for all }t\in[0,1].
\]
Moreover, $d_m\bigl(\lambda'(0)+\lambda_m\bigr)\le0$, with equality only if $M$ and
$\LT_m(0)$ differ only in entries $(k,k+m+1)$, and
$d_{m-1}\bigl(\lambda'(0)+\lambda_m\bigr)\le0$, with equality only if they differ only in
entries $(k,k+m)$.
\end{proposition}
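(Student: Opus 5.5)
We compute $\lambda'(0)$ by first-order perturbation theory at the simple eigenvalue $\lambda_m$ (Lemma~\ref{l:simple}). Put $L_0=\LT_m(0)$ and $P=M-L_0$. We need a left eigenvector of $L_0$ for $\lambda_m$. Since $L_0$ is circulant, hence normal, and $f_1$ is a right eigenvector, the standard formula gives
\[
\lambda'(0)=\frac{f_1^{*}Pf_1}{f_1^{*}f_1}=\frac1n\sum_{k}\bigl(Pf_1\bigr)_k\,\overline{\mu^{k-1}}.
\]
Now $(Mf_1)_k=\mu^{k-1}\sum_{j\ne k}a^M_{kj}(1-\mu^{j-k})$, so $\mu^{-(k-1)}(Mf_1)_k=:z_k\in Z_k(f_1)=S_n$, a point of $S_n$ written through its generator coefficients $a^M_{k,k+r}n\in[0,1]$. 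Likewise $\mu^{-(k-1)}(L_0f_1)_k=\lambda_m$ for every~$k$. Hence
\[
\lambda_m+\lambda'(0)=\frac1n\sum_{k=1}^n z_k ,
\]
an average of points of $S_n$, which lies in $S_n$ by convexity. Then $\lambda_m+t\lambda'(0)=(1-t)\lambda_m+t(\lambda_m+\lambda'(0))\in S_n$ for $t\in[0,1]$, again by convexity.

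For the refined claims, we apply the supporting functional. Since $d_m$ is affine and $d_m(\lambda_m)=0$, we get $d_m(\lambda_m+\lambda'(0))=\frac1n\sum_k d_m(z_k)$ with each $d_m(z_k)\le0$ by Lemma~\ref{l:edgem}. Writing $z_k=\sum_r t_{k,r}g_r$ with $t_{k,r}=na^M_{k,k+r}$, we have
\[
d_m(z_k)=\sum_r t_{k,r}\operatorname{Re}(\bar\nu_m g_r)-\sum_{r\le m}\operatorname{Re}(\bar\nu_m g_r).
\]
From the sign pattern in the proof of Lemma~\ref{l:edgem} (positive for $r\le m$, zero for $r=p$, negative for $r\ge p+1$), equality forces $t_{k,r}=1$ for $r\le m$ and $t_{k,r}=0$ for $r\ge p+1$, with $t_{k,p}$ free. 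This says exactly that row $k$ of $M$ agrees with that of $L_0$ except possibly in entry $(k,k+m+1)$; diagonal entries then agree by zero row sums.

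The argument for $d_{m-1}$ is the same. The edge $[\lambda_{m-1},\lambda_m]$ has outward normal $\nu_{m-1}$, and $\lambda_m$ lies on its supporting line, so $d_{m-1}(\lambda_m)=0$ and $d_{m-1}$ is affine. For this edge the generator sign pattern is positive for $r\le m-1$, zero for $r=m$, and negative for $r\ge m+1$. Equality therefore forces $t_{k,r}=1$ for $r<m$ and $t_{k,r}=0$ for $r>m$, i.e., only the entries $(k,k+m)$ may differ. When $m=1$, $d_0$ should be read with $\lambda_0=0$; that edge is $[0,\lambda_1]$ with generator $g_1$, and the same computation applies.

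The main point needing care is the derivative formula. Simplicity of $\lambda_m$ (Lemma~\ref{l:simple}) gives analyticity of $\lambda(t)$. We must also justify that the left eigenvector is $f_1^{*}$, which holds because circulants commute with $Q$ and are normal. Beyond that, everything reduces to the row-wise zonotope identity of Proposition~\ref{p:union}.
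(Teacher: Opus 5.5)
Your proof is correct and follows the paper's argument. Both use the same first-order formula $\lambda'(0)=f_1^{*}(M-\LT_m(0))f_1/f_1^{*}f_1$, justified by normality. Both get zonotope membership from coefficients in $[0,1]$, and both handle the equality cases with the generator sign pattern of Lemma~\ref{l:edgem}.

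The only difference is bookkeeping. You average the row points $z_k\in Z_k(f_1)=S_n$ and use convexity. The paper instead sums over diagonals to write $\lambda_m+t\lambda'(0)=\sum_{s\le m}(1+t\rho_s)g_s+\sum_{s>m}t\rho_s g_s$ directly.
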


\begin{pf}
Write $a_{kj}$ and $b_{kj}$ for the negated off-diagonal entries of $\LT_m(0)$ and $M$, and
put $\rho_s=\sum_{k=1}^n\bigl(b_{k,k+s}-a_{k,k+s}\bigr)$, $1\le s\le n-1$. Since
$a_{k,k+s}=\frac1n$ for $s\le m$ and $a_{k,k+s}=0$ for $s>m$, we have $\rho_s\in[-1,0]$
for $s\le m$ and $\rho_s\in[0,1]$ for $s>m$. The matrix $\LT_m(0)$ is circulant, hence
normal, and $f_1$ is both a right and a left eigenvector for the simple eigenvalue
$\lambda_m$. Therefore
\[
\lambda'(0)=\frac{f_1^{*}\bigl(M-\LT_m(0)\bigr)f_1}{f_1^{*}f_1}
=\frac1n\sum_{k}\sum_{j\ne k}\bigl(b_{kj}-a_{kj}\bigr)\bigl(1-\mu^{\,j-k}\bigr)
=\sum_{s=1}^{n-1}\rho_s\,g_s .
\]
Hence
$\lambda_m+t\lambda'(0)=\sum_{s\le m}(1+t\rho_s)\,g_s+\sum_{s>m}t\rho_s\,g_s$ with all
coefficients in $[0,1]$ for $t\in[0,1]$, which lies in $S_n$ by
Proposition~\ref{p:zono}.

By the proof of Lemma~\ref{l:edgem}, $\operatorname{Re}(\bar\nu_m g_s)$ is positive for
$s\le m$, zero for $s=m+1$, and negative for $s\ge m+2$. Thus every term of
$d_m\bigl(\lambda_m+\lambda'(0)\bigr)=\sum_s\rho_s\operatorname{Re}(\bar\nu_m g_s)$ is
nonpositive, and the sum vanishes only if $\rho_s=0$ for all $s\ne m+1$. Since the
differences $b_{k,k+s}-a_{k,k+s}$ have a fixed sign for each $s$, this means that
$b_{k,k+s}=a_{k,k+s}$ for all $k$ and all $s\ne m+1$. The argument for $d_{m-1}$, with
$\nu_{m-1}$ in place of $\nu_m$ and $d_0(z)=\operatorname{Re}(\bar\nu_0z)$ for $m=1$, is the
same with $s=m$ singled out.
\qed\end{pf}

Thus, to first order, no perturbation of $\LT_m(0)$ within $\cL_n$ moves $\lambda_m$ out
of $S_n$, and the only perturbations that are tangent to the edge $[\lambda_m,\lambda_{m+1}]$
are those of~\eqref{e:Lmw}, which change only the arcs $k\to k+m+1$. The counterexamples of
this section are therefore necessarily second-order effects, and their smallness is no
accident.

\begin{proposition}\label{p:pertm}
Let $1\le m\le n-2$ and $\omega\in\R^n$. The eigenvalue $\lambda(t)$ of $\LT_m(t\omega)$ with $\lambda(0)=\lambda_m$ is analytic near $t=0$ and satisfies
\[
\lambda(t)=\lambda_m+t\,\bar\omega\,g_p+O(t^2),\qquad
d_m\bigl(\lambda(t)\bigr)=t^2\sum_{j=1}^{n-1}c_j\,|\hat\omega(j)|^2+O(t^3),
\]
where
\begin{equation}\label{e:cj}
c_j=\frac2n\sin\frac{\pi p}{n}\;
\operatorname{Im}\frac{1-\mu^{p(j+1)}}{F_m(j+1)-F_m(1)} .
\end{equation}
In particular, $c_{n-1}=0$,
\begin{equation}\label{e:c1}
c_1=\frac2n\,\sin\frac{2\pi}{n}\,\sin\frac{\pi}{n}\;
\frac{\sin\bigl(2\pi(m+1)/n\bigr)}{\sin(\pi m/n)},
\end{equation}
and, for even $n$,
\[
c_{n/2}=\frac{c_1}2\quad\text{if $m$ is even},\qquad
c_{n/2}=-\frac2n\,\sin\frac{2\pi}{n}\,\sin\frac{\pi(m+1)}{n}<0\quad\text{if $m$ is odd}.
\]
If $\omega_k\ge0$ for all $k$, then $\LT_m(t\omega)\in\cL_n$ for all sufficiently small $t\ge0$.
\end{proposition}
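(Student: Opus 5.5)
The plan is to repeat the proof of Lemma~\ref{l:pert}, with $\LT_m$ in place of $\LT$. Conjugating by the Fourier matrix $F$ gives
\[
F^{-1}\LT_m(t\omega)F=R+tE,\qquad R=\operatorname{diag}(\kappa_0,\dots,\kappa_{n-1}),\qquad e_{lm'}=\frac{1-\mu^{pm'}}{n}\,\hat\omega(m'-l).
\]
This uses $D_\omega f_{m'}=\sum_l\hat\omega(m'-l)f_l$ and $(I-Q^p)f_{m'}=(1-\mu^{pm'})f_{m'}$.

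By Lemma~\ref{l:simple}, $\kappa_1=\lambda_m$ is simple, so the diagonal entries $\kappa_k$ with $k\ne1$ differ from $\kappa_1$. The Schur-complement and implicit-function argument of Lemma~\ref{l:pert} then applies verbatim. It gives analyticity and the expansion
\[
\lambda(t)=\kappa_1+te_{11}+t^2\sum_{k\ne1}\frac{e_{1k}e_{k1}}{\kappa_1-\kappa_k}+O(t^3).
\]
Here $e_{11}=\bar\omega\,g_p$. Since $d_m$ is real affine and vanishes on the line through $\lambda_m$ and $\lambda_m+g_p$ (Lemma~\ref{l:edgem}), we get $d_m(\lambda(t))=t^2\operatorname{Re}(\bar\nu_m\beta)+O(t^3)$.

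Next compute each term. Put $k=j+1$. Then
\[
e_{1k}e_{k1}=\frac1{n^2}(1-\mu^p)(1-\mu^{p(j+1)})\,|\hat\omega(j)|^2,\qquad \kappa_1-\kappa_k=\frac1n\bigl(F_m(j+1)-F_m(1)\bigr),
\]
and
\[
\bar\nu_m(1-\mu^p)=-e^{i\pi p/n}\cdot 2i\sin\frac{\pi p}n\,e^{-i\pi p/n}=-2i\sin\frac{\pi p}n .
\]
Therefore
\[
\operatorname{Re}(\bar\nu_m\beta)=\frac1n\sum_j\operatorname{Re}\Bigl(-2i\sin\frac{\pi p}{n}\,\frac{1-\mu^{p(j+1)}}{F_m(j+1)-F_m(1)}\Bigr)|\hat\omega(j)|^2 .
\]
This is exactly \eqref{e:cj}, because $\operatorname{Re}(-iz)=\operatorname{Im}z$.

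The special values follow by substitution.
\begin{itemize}
\item For $j=n-1$ we have $\mu^{pn}=1$, so the numerator vanishes and $c_{n-1}=0$.
\item For $j=1$, write $1-\mu^{2p}=2i\sin(2\pi p/n)e^{-2\pi ip/n}$. Use $F_m(k)=(1-\mu^{kp})/(1-\mu^k)$ and simplify $F_m(2)-F_m(1)$ via
\[
F_m(2)=F_m(1)\,\frac{1+\mu^p}{1+\mu}.
\]
This gives
\[
F_m(2)-F_m(1)=F_m(1)\,\frac{\mu^p-\mu}{1+\mu},
\]
and the half-angle formulas then yield \eqref{e:c1}.
\item For even $n$ and $j=n/2$, we have $\mu^{j+1}=-\mu$ and $\mu^{p(j+1)}=(-1)^p\mu^p$. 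If $p$ is odd ($m$ even), then $F_m(n/2+1)=(1+\mu^p)/(1+\mu)=F_m(2)/F_m(1)$, and the same simplification gives $c_1/2$. If $p$ is even ($m$ odd), the numerator becomes $1-\mu^p$ and $F_m(n/2+1)=(1-\mu^p)/(1+\mu)$; a direct computation gives the stated negative value.
\end{itemize}

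The main obstacle is keeping this trigonometric bookkeeping straight in the $c_1$ and $c_{n/2}$ cases. Everything else transfers directly from Lemma~\ref{l:pert}.

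Finally, membership in $\cL_n$ for small $t\ge0$. If $\omega_k\ge0$, the chord entries $t\omega_k/n$ lie in $[0,1/n]$ once $t\max_k\omega_k\le1$. The off-diagonal supports of $Q,\dots,Q^p$ are disjoint, as noted after \eqref{e:Lmw}. Hence $\LT_m(t\omega)\in\cL_n$.
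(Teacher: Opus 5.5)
Your proposal is correct and follows the paper's proof essentially step for step. The shared steps are the Fourier conjugation to $R+tE$, the Schur-complement expansion justified by Lemma~\ref{l:simple}, the identity $\bar\nu_m(1-\mu^p)=-2i\sin\frac{\pi p}{n}$, and the same factorizations of $F_m(2)-F_m(1)$ and $F_m(n/2+1)-F_m(1)$ for the special coefficients; the only difference is that you leave the half-angle evaluation for $c_1$ and the $m$-odd case of $c_{n/2}$ as direct computations, which the paper writes out and which check out.
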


\begin{pf}
As in the proof of Lemma~\ref{l:pert}, $F^{-1}\LT_m(t\omega)F=R+tE$ with
$R=\operatorname{diag}(\kappa_k)$ and $e_{lk}=\frac1n(1-\mu^{pk})\hat\omega(k-l)$. By
Lemma~\ref{l:simple}, the Schur complement argument applies and gives
$\lambda(t)=\kappa_1+te_{11}+t^2\beta+O(t^3)$ with $e_{11}=\bar\omega g_p$ and
\[
\beta=\sum_{k\ne1}\frac{e_{1k}e_{k1}}{\kappa_1-\kappa_k}
=\frac{1-\mu^p}{n}\sum_{j=1}^{n-1}\frac{1-\mu^{p(j+1)}}{F_m(j+1)-F_m(1)}\,|\hat\omega(j)|^2,
\]
because $\kappa_1-\kappa_k=\frac1n\bigl(F_m(k)-F_m(1)\bigr)$. Since
$1-\mu^p=2i\sin\frac{\pi p}{n}\,e^{-i\pi p/n}$, we have
$\bar\nu_m(1-\mu^p)=-2i\sin\frac{\pi p}n$, and $\operatorname{Re}(-iz)=\operatorname{Im}z$
gives~\eqref{e:cj}. The first-order term does not contribute to $d_m$, since $g_p$ is
parallel to the edge.

For $j=n-1$, the numerator in~\eqref{e:cj} vanishes. For $j=1$, using
$F_m(k)=\frac{1-\mu^{pk}}{1-\mu^k}$ for $\mu^k\ne1$, we get
$F_m(2)-F_m(1)=\frac{(1-\mu^p)(\mu^p-\mu)}{(1-\mu)(1+\mu)}$, hence
\[
\frac{1-\mu^{2p}}{F_m(2)-F_m(1)}=\frac{(1+\mu^p)(1-\mu^2)}{\mu^p-\mu}
=-\frac{2\cos\frac{\pi p}n\,\sin\frac{2\pi}n}{\sin\frac{\pi m}n}\,e^{-i\pi/n},
\]
whose imaginary part is
$2\cos\frac{\pi p}n\sin\frac{2\pi}n\sin\frac\pi n\big/\sin\frac{\pi m}n$. Multiplying by
$\frac2n\sin\frac{\pi p}n$ gives~\eqref{e:c1}.

For even $n$ and $j=n/2$, we have $\mu^{j+1}=-\mu$ and $F_m(j+1)=\sum_{s=0}^m(-\mu)^s$. If
$m$ is even, the same computation gives exactly half the value obtained for $j=1$. If $m$
is odd, then $1-\mu^{p(j+1)}=1-\mu^p$ and $F_m(j+1)=\frac{1-\mu^p}{1+\mu}$, so
$F_m(j+1)-F_m(1)=-\frac{2\mu(1-\mu^p)}{1-\mu^2}$ and the quotient in~\eqref{e:cj} equals
$-\frac{1-\mu^2}{2\mu}=-i\sin\frac{2\pi}n$, which yields the stated negative value.
\qed\end{pf}

\begin{theorem}\label{t:edges}
Let $m\ge1$ and $2(m+1)<n$. For $\omega_k=1+\cos\frac{2\pi k}{n}$, the eigenvalue
$\lambda(t)$ of $\LT_m(t\omega)$ with $\lambda(0)=\lambda_m$ satisfies
\[
d_m\bigl(\lambda(t)\bigr)=\frac{c_1}4\,t^2+O(t^3),\qquad c_1>0,
\]
so $\lambda(t)$ lies beyond the edge $[\lambda_m,\lambda_{m+1}]$\, for all sufficiently small
$t>0$. If, in addition, $n$ and $m$ are even, the same holds for the alternating weights
$\omega_k=\frac12\bigl(1+(-1)^k\bigr)$, with $d_m(\lambda(t))=\frac{c_1}8\,t^2+O(t^3)$.
\end{theorem}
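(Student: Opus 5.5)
The plan is to apply Proposition~\ref{p:pertm} directly: compute the Fourier coefficients $\hat\omega(j)$ of the chosen weights, substitute them into $\sum_j c_j|\hat\omega(j)|^2$, and check the sign of $c_1$.

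\textbf{Cosine weights.} For $\omega_k=1+\cos\frac{2\pi k}{n}$, write $\cos\frac{2\pi k}{n}=\frac12(\mu^{k}+\mu^{-k})$. The orthogonality of $\sum_k\mu^{j(k-1)}$ then shows that only $j\in\{0,1,n-1\}$ survive. The nonconstant coefficients are $\hat\omega(1)=\frac12\mu^{-1}$ and $\hat\omega(n-1)=\frac12\mu$, each of modulus~$\frac12$; a phase factor from the index shift $k-1$ does not matter, since only moduli enter.

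By Proposition~\ref{p:pertm}, $d_m(\lambda(t))=t^2\bigl(c_1\cdot\frac14+c_{n-1}\cdot\frac14\bigr)+O(t^3)$. Since $c_{n-1}=0$, this equals $\frac{c_1}{4}t^2+O(t^3)$.

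Since $\omega_k\in[0,2]$ is nonnegative, $\LT_m(t\omega)\in\cL_n$ for small $t\ge0$. Actual membership requires $t\omega_k\le1$, which holds for $t\le\frac12$.

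It remains to show $c_1>0$ using \eqref{e:c1}. The factors $\sin\frac{2\pi}{n}$ and $\sin\frac{\pi}{n}$ are positive for $n\ge3$, and $\sin\frac{\pi m}{n}>0$ because $1\le m<n$. The hypothesis $2(m+1)<n$ gives $0<\frac{2\pi(m+1)}{n}<\pi$, so $\sin\frac{2\pi(m+1)}{n}>0$. This sign check is the only point where the hypothesis enters, and it is the main thing to verify; it is straightforward.

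Once $d_m(\lambda(t))>0$ holds for small $t>0$, Lemma~\ref{l:edgem} gives $\lambda(t)\notin S_n$, and indeed $\lambda(t)$ lies beyond the supporting line of $[\lambda_m,\lambda_{m+1}]$.

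\textbf{Alternating weights.} Now let $n$ and $m$ be even and $\omega_k=\frac12(1+(-1)^k)$. Since $(-1)^k=\mu^{(n/2)k}$, the only nonconstant harmonic is $j=n/2$, with $|\hat\omega(n/2)|=\frac12$. Hence $d_m(\lambda(t))=\frac14 c_{n/2}t^2+O(t^3)$. By Proposition~\ref{p:pertm}, $c_{n/2}=\frac{c_1}{2}$ for even $m$, which gives $\frac{c_1}{8}t^2$. The weights lie in $\{0,1\}$, so $\LT_m(t\omega)\in\cL_n$ for all $t\in[0,1]$. Positivity again follows from $c_1>0$ under $2(m+1)<n$.
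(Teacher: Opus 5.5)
Your proposal is correct and is essentially the paper's own proof. You identify the only nonconstant harmonics ($j=1,n-1$ for the cosine weights, $j=n/2$ for the alternating ones, each of modulus $\frac12$), use $c_{n-1}=0$ and $c_{n/2}=\frac{c_1}2$ from Proposition~\ref{p:pertm}, and obtain $c_1>0$ from \eqref{e:c1} via $0<\frac{2\pi(m+1)}{n}<\pi$. Your remark that $t\le\frac12$ ensures $\LT_m(t\omega)\in\cL_n$ for the cosine weights is a harmless sharpening of the paper's ``sufficiently small $t$''.
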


\begin{pf}
For $\omega_k=1+\cos\frac{2\pi k}{n}$, the only nonzero $\hat\omega(j)$ with $j\ne0$ are
$\hat\omega(1)$ and $\hat\omega(n-1)$, of modulus $\frac12$; since $c_{n-1}=0$,
Proposition~\ref{p:pertm} gives the expansion, and $c_1>0$ by~\eqref{e:c1}, since
$0<\frac{2\pi(m+1)}{n}<\pi$. For the alternating weights, the only nonzero $\hat\omega(j)$
with $j\ne0$ is $\hat\omega(n/2)$, of modulus $\frac12$, and $c_{n/2}=\frac{c_1}2$.
\qed\end{pf}

For $m=1$, \eqref{e:c1} reduces to the coefficient $\frac4n\sin^2\frac{2\pi}n\cos\frac{2\pi}n$
of the first harmonic in Lemma~\ref{l:pert}.

\begin{corollary}\label{c:edges}
Let $n\ge5$. For every edge $E$ of $S_n$ other than $[0,\lambda_1]$, $[\lambda_{n-2},1]$,
their conjugates, and, for even $n$, the horizontal edges $[\lambda_{n/2-1},\lambda_{n/2}]$
and $[\bar\lambda_{n/2-1},\bar\lambda_{n/2}]$, there are points of $\Lambda_n$ lying beyond
the supporting line of $S_n$ through~$E$.
\end{corollary}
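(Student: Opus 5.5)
The plan is to reduce the corollary to Theorem~\ref{t:edges} and the symmetries of Remark~\ref{r:sym}. First I list the edges of $S_n$. The upper chain consists of $[\lambda_{m-1},\lambda_m]$ for $1\le m\le n-1$, with $\lambda_0=0$ and $\lambda_{n-1}=1$. The lower chain consists of their conjugates. Excluding $[0,\lambda_1]$, $[\lambda_{n-2},1]$ and their conjugates leaves the upper edges $[\lambda_m,\lambda_{m+1}]$ with $1\le m\le n-3$, and their conjugates. Since $\Lambda_n=\overline{\Lambda_n}$ and $S_n=\overline{S_n}$, a point beyond the supporting line through $E$ conjugates to a point beyond the supporting line through $\bar E$. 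So it suffices to treat the upper edges $E_m=[\lambda_m,\lambda_{m+1}]$ with $1\le m\le n-3$.

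Next I use the central symmetry $z\mapsto 1-z$. It maps $\Lambda_n$ to itself (Remark~\ref{r:sym}) and maps $S_n$ to itself with $1-\lambda_k=\bar\lambda_{n-1-k}$. Hence $E_m$ is sent to $\overline{E_{n-2-m}}$, and supporting lines go to supporting lines, with exterior sides corresponding. Combining this with conjugation, the edge $E_m$ is handled as soon as $E_{n-2-m}$ is handled. The index pair $\{m,\,n-2-m\}$ always contains an element $m'$ with $m'\le (n-2)/2$. If $m'<(n-2)/2$, i.e.\ $2(m'+1)<n$, then Theorem~\ref{t:edges} supplies, for small $t>0$, an eigenvalue $\lambda(t)$ of $\LT_{m'}(t\omega)\in\cL_n$ with $d_{m'}(\lambda(t))>0$. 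By Lemma~\ref{l:edgem}, this point lies beyond the supporting line through $E_{m'}$. Here membership $\LT_{m'}(t\omega)\in\cL_n$ holds because $\omega_k=1+\cos\frac{2\pi k}{n}\in[0,2]$, so $t\omega\in[0,1]^n$ for $t\le\frac12$. The case $m'\ge1$ is guaranteed since $m\ge1$ and $n-2-m\ge1$.

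The remaining case is $m'=(n-2)/2$, which occurs only for even $n$ and gives $m=n/2-1$. This edge is $[\lambda_{n/2-1},\lambda_{n/2}]$, which is horizontal: its direction is $g_{n/2}$, and $\arg g_{n/2}=0$ by~\eqref{e:gen}. This edge is excluded in the statement, as is its conjugate. For odd $n$ the strict inequality always holds, so no edge is lost there. The only real point to check is the bookkeeping in the symmetry step, namely that $1-z$ maps the supporting line of $E_m$ to that of $\overline{E_{n-2-m}}$ with the exterior preserved. This follows because an affine symmetry of $S_n$ preserves $S_n$ and hence its supporting half-planes. I expect no genuine obstacle; the analytic content is entirely in Theorem~\ref{t:edges}.
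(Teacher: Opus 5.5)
Your proposal is correct and follows essentially the same route as the paper: Theorem~\ref{t:edges} handles $[\lambda_m,\lambda_{m+1}]$ for $1\le m<\frac n2-1$, the reflection symmetry covers $\frac n2-1<m\le n-3$, and conjugation covers the lower edges. The only difference is cosmetic: you compose $z\mapsto1-z$ with conjugation, whereas the paper uses $z\mapsto1-\bar z$ directly, which sends $[\lambda_m,\lambda_{m+1}]$ to $[\lambda_{n-2-m},\lambda_{n-1-m}]$.
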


\begin{pf}
For the edges $[\lambda_m,\lambda_{m+1}]$ with $1\le m<\frac n2-1$, this is
Theorem~\ref{t:edges}. The map $z\mapsto1-\bar z$ preserves $S_n$ and~$\Lambda_n$
(Remark~\ref{r:sym}) and sends $[\lambda_m,\lambda_{m+1}]$ to
$[\lambda_{n-2-m},\lambda_{n-1-m}]$, which covers the edges with
$\frac n2-1<m\le n-3$. Conjugation covers the lower edges. For odd $n$, no value of $m$
is left out other than $m=0$ and $m=n-2$.
\qed\end{pf}

\begin{rmk}[the horizontal edge]\label{r:central}
Let $n$ be even and $m=\frac n2-1$, so that $p=\frac n2$ and $\mu^p=-1$. Then the numerator
in~\eqref{e:cj} vanishes for odd~$j$. For even $j\le n-2$, the computation in the proof of
Proposition~\ref{p:pertm} gives
$c_j=-\frac4n\sin\frac\pi n\,\sin\frac{\pi(j+1)}n\big/\sin\frac{\pi j}n<0$. Thus, at the
horizontal edge, the second-order term never points outward, whatever the weights~$\omega$.
For $n=4$, this edge is $[\lambda_1,\lambda_2]$, in accordance with
Remark~\ref{r:gamma}. \nv{Liu~\cite[Proposition~3]{Liu026} extended this observation to every $C^2$ curve of standardized Laplacians through $\LT_m(0)$: along such a curve, $\operatorname{Im}\lambda$ does not exceed the height of the horizontal edge up to $o(t^2)$.} Whether the horizontal edge lies on $\partial\Lambda_n$ is the subject of
Question~\ref{q:top} below.
\end{rmk}

\begin{rmk}[one-sided versus alternating chords]\label{r:parity}
The sign pattern of the coefficients $c_j$ explains the shape of numerically optimal
chord weights. For $m=1$, $c_j+c_{n-j}$ is proportional to $\cos\frac{2\pi j}{n}$, so low-frequency harmonics favour slowly varying,
``one-sided'' weight patterns, as for $\Gamma_7$ and in Theorem~\ref{t:five}; the alternating harmonic $j=n/2$ pushes the eigenvalue inward. For
even $m\ge2$ and even $n$, the alternating harmonic pushes outward with half the
coefficient of the first harmonic, but it is realized by $0/1$ weights without any other
harmonics, whereas one-sided weights also excite harmonics $j\ge2$ with negative
coefficients. Accordingly, in the cases we computed, the second-order coefficient is maximized among
all $0/1$ weight vectors by alternating chords for $(n,m)=(8,2)$, $(10,2)$, $(12,2)$, $(12,4)$,
and by one-sided blocks of consecutive chords for $(n,m)=(7,1)$, $(7,2)$, $(8,1)$, $(9,2)$, $(10,3)$. Since alternating weights commute with $Q^2$, the
eigenvalue problem then reduces to a $2\times2$ matrix on $\operatorname{span}\{f_1,f_{1+n/2}\}$.
For $(n,m)=(8,2)$, numerical optimization over all $\omega\in[0,1]^8$ gives weights alternating between $0$ and about $0.63$, i.e., chords $k\to k+3$ of weight about $0.63$ at every other vertex, and a distance of about $9.8\cdot10^{-4}$ beyond $[\lambda_2,\lambda_3]$; for $(n,m)=(7,2)$, it gives one-sided weights and about $5.9\cdot10^{-4}$.
\end{rmk}
\end{nbblock}

\section[Computations for small orders]{\nb{Computations for small orders}}

\begin{itemize}
\item $n=4$: all $2^{12}$ matrices in $\cL_4$ with off-diagonal entries in $\bigl\{0,-\frac14\bigr\}$, i.e., the standardized Laplacians of all unweighted digraphs on $4$ vertices;
$n=5$: all $2^{20}$ such matrices in~$\cL_5$. All eigenvalues lie in $S_n$ up to $3\cdot10^{-15}$.
The maximal imaginary parts are $\frac14$ and $0.3077683\ldots=\frac1{10}\cot\frac\pi{10}$, the tops of $S_4$ and $S_5$.
\item $n=6$: all $16^6$ such matrices in $\cL_6$ whose digraphs have out-degrees $\le2$. No violation.
\item \nb{$n=4$: numerical searches over weighted digraphs found no violation. In
particular, $300$ local maximizations of $\max\operatorname{Im}\lambda$ were started near each of the standardized Laplacians of the directed $4$-cycle, of the $4$-cycle with all chords $k\to k+2$, of the half-sum of these two matrices, and of $H_4(1)$, and from random matrices in~$\cL_4$; in all runs, the resulting maximum of $\operatorname{Im}\lambda$ agreed with $\frac14$ up to rounding error. Note that $\LT(H_4(1))$, like $\LT_4$ \textup{(}Remark~\textup{\ref{r:path})}, has the eigenvalue $\frac12+\frac14i$, the midpoint of the horizontal edge $[\lambda_1,\lambda_2]$ of~$S_4$; more generally, $\LT(H_4(t))$ has the eigenvalue $\frac{1+t}4+\frac14i$ on this edge. As a check, local maximizations of the signed distance beyond the line through
$[\lambda_1,\lambda_2]$ over all off-diagonal entries, started from the directed $n$-cycle with random chord weights on $k\to k+2$ and small random weights on the other arcs, found the counterexamples of Section~\textup{\ref{s:chords}} in $28$ of $40$ runs for $n=5$ and in $26$ of $40$ runs for $n=6$, but no violation for $n=4$.}
\end{itemize}

\nb{According to the first item, no unweighted digraph on $5$ vertices has an eigenvalue outside~$S_5$, so the counterexamples for $n=5$ are necessarily weighted. This agrees with Remark~\ref{r:path}: at $t=1$, the branch $\lambda(t)$ of Theorem~\ref{t:allfive} has returned to~$S_5$.}

For $n=4$, $S_4=R_4\cap\bigl\{|\operatorname{Im}z|\le\frac14\bigr\}$, where $R_4$ is the region of~\cite[Theorem~5]{AgaChe05LAA}.
Let, as in~\cite{AgaChe05LAA}, $z(n)=\sup\{|\operatorname{Im}\lambda|:\lambda\in\Lambda_n\}$.
So for $n=4$ the conjecture is equivalent to $z(4)=\frac14$ (against the Pick bound
$\frac18\cot\frac\pi8\approx0.302$ \cite{Pick1922,GregoryKirkland93}). The case $n\le3$ is settled in~\cite{AgaChe05LAA}.

\begin{proposition}[realizations with at most two fractional entries per row]\label{p:sparse}
If $\lambda\in\Lambda_n$, then $\lambda$ is an eigenvalue of some $\LT\in\cL_n$ in which every row has at
most two off-diagonal entries in the open interval $\bigl(-\frac1n,0\bigr)$.
\end{proposition}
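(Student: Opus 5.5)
We will fix $\lambda\in\Lambda_n$, an eigenvector $v\ne0$, and work row by row, using Proposition~\ref{p:union}. By that proposition, for each $i\in\supp v$ the condition on row $i$ is that
\[
n\lambda=\sum_{j\ne i}t_{j}\Bigl(1-\frac{v_j}{v_i}\Bigr),\qquad t_j=n a_{ij}\in[0,1].
\]
If $v_i=0$, row $i$ can be taken to be zero, which has no fractional entries. The rows are independent, so it suffices to show the following: whenever a point $w\in\C\cong\R^2$ lies in a zonotope $\sum_{j}[0,h_j]$, it can be written as $\sum_j t_jh_j$ with at most two $t_j\in(0,1)$. Keeping the same $v$ and $\lambda$ and replacing each row by such a representation then gives the desired $\LT$.

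This zonotope fact is a linear-programming/extreme-point argument. Consider the polytope
\[
P=\Bigl\{t\in[0,1]^{n-1}:\ \sum_j t_j\operatorname{Re}h_j=\operatorname{Re}w,\ \sum_j t_j\operatorname{Im}h_j=\operatorname{Im}w\Bigr\}.
\]
It is nonempty by assumption and compact, so it has a vertex $t^\ast$. At a vertex, at least $n-1$ linearly independent constraints are active. At most two of these come from the equality constraints, so at least $n-3$ of them are box constraints $t_j\in\{0,1\}$. Hence at most two coordinates of $t^\ast$ are fractional.

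More precisely, suppose three coordinates $j_1,j_2,j_3$ were in $(0,1)$. The vectors $h_{j_1},h_{j_2},h_{j_3}\in\R^2$ are linearly dependent, so there is a nonzero $c\in\R^3$ with $\sum c_kh_{j_k}=0$. Moving $t^\ast$ by $\pm\varepsilon c$ on these three coordinates stays in $P$, which contradicts $t^\ast$ being a vertex. This small perturbation argument is the cleanest way to write the step.

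Finally, we set $a_{ij}=t^\ast_j/n$ for each $i\in\supp v$. Then $a_{ij}\in[0,\frac1n]$, row $i$ of $\LT v=\lambda v$ holds, and each row has at most two entries in $(-\frac1n,0)$. Thus $\LT\in\cL_n$ has eigenvalue $\lambda$ with the required sparsity.

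There is no real obstacle. The only points needing care are that the two equations are real (so they give two real constraints, not one complex one) and that rows with $v_i=0$ are handled separately.
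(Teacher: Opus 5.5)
Your proposal is correct and takes the same approach as the paper. Rows with $v_i=0$ are set to zero. For each $i\in\supp v$, you pick a vertex of the polytope cut out of the box by the two real equations of row $i$; it has at least $n-3$ coordinates at a bound, so at most two are fractional. Your perturbation argument (three fractional coordinates give a linear dependence among three vectors in $\R^2$, contradicting extremality) is a self-contained justification of this same vertex-counting step.
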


\begin{pf}
Let $\LT v=\lambda v$ with $\LT\in\cL_n$ and $v\ne0$, and write $a_{ij}=-\ell_{ij}\in\bigl[0,\frac1n\bigr]$ for $j\ne i$. Since the $i$th row of the
equation $\LT v=\lambda v$ involves only the entries of the $i$th row of $\LT$, the rows can be modified independently.

Fix $i\in\supp v$ and consider the set $P_i$ of all vectors $(a_{ij})_{j\ne i}\in\bigl[0,\frac1n\bigr]^{n-1}$ satisfying $\sum_{j\ne i}a_{ij}(v_i-v_j)=\lambda v_i$, i.e., the two real linear equations given by the real and imaginary parts of this identity. The set $P_i$ is a polytope, and it is nonempty, since it contains the $i$th row of~$\LT$; hence it has a vertex. At a vertex of $P_i$, the active constraints have rank $n-1$; the two equations contribute at most~$2$, so at least $n-3$ coordinates attain one of the bounds $0$ or $\frac1n$. Thus there is a vector in $P_i$ with at most two coordinates in the open interval $\bigl(0,\frac1n\bigr)$. For $i\notin\supp v$, the $i$th row of the equation reads $0=\lambda\cdot0$ and is satisfied by the zero row.

Let $\LT'$ be the matrix whose off-diagonal entries in row $i$ are the negatives
of the chosen vector (the zero vector if $i\notin\supp v$), with diagonal entries determined by zero row sums. Then $\LT'\in\cL_n$, each of its rows has at most two off-diagonal entries in $\bigl(-\frac1n,0\bigr)$, and each row of $\LT'v=\lambda v$ holds
by construction. Hence $\lambda\in\sigma(\LT')$.
\qed\end{pf}

\section{Further structural properties and questions}

In this section, we establish some structural properties of $\Lambda_n$ and $S_n$ valid for every $n$ and formulate open questions.

\subsection{Relations between different orders}

\begin{proposition}[balanced blow-up]\label{p:blowup}
If $m\mid n$, then $\Lambda_m\subseteq\Lambda_n$.
\end{proposition}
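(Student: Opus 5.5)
The plan is a blow-up (Kronecker product) construction: replace each vertex of an $m$-vertex digraph by $r=n/m$ twin copies, rescale the weights so that the entries stay in $\bigl[-\frac1n,0\bigr]$, and check that the spectrum is preserved. The rescaling is chosen so that each arc weight, split among $r$ target copies, gives row sums matching the original row.

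Let $\LT=(\ell_{ij})\in\cL_m$ with $\LT v=\lambda v$, $v\ne0$, and write $a_{ij}=-\ell_{ij}\in\bigl[0,\frac1m\bigr]$.

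First, index the vertices of the new matrix by pairs $(i,p)$ with $1\le i\le m$ and $1\le p\le r$. For $i\ne j$ I will set
\[
b_{(i,p),(j,q)}=\frac{a_{ij}}{r}\in\Bigl[0,\frac1{mr}\Bigr]=\Bigl[0,\frac1n\Bigr].
\]
Arcs between copies of the same vertex get weight $0$, and the diagonal is fixed by zero row sums. This gives $M=\LT\otimes\frac1rJ_r$ off the diagonal blocks, where $J_r$ is the all-ones matrix; more precisely, the off-diagonal part of $M$ is $-(A\otimes\frac1rJ_r)$. Each row of $M$ has off-diagonal sum $\sum_{j\ne i}a_{ij}$, equal to the row sum of $A$ in the original. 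Hence $M\in\cL_n$.

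Second, lift the eigenvector as $w_{(i,p)}=v_i$. Then
\[
(Mw)_{(i,p)}=\sum_{j\ne i}\sum_q\frac{a_{ij}}r\,(v_i-v_j)=\sum_{j\ne i}a_{ij}(v_i-v_j)=\lambda v_i=\lambda w_{(i,p)}.
\]
Since $w\ne0$, $\lambda\in\sigma(M)\subseteq\Lambda_n$.

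There is no real obstacle here. The only point needing care is that the copies of the same vertex must not be joined to each other, so that the bound $\frac1n$ is respected and the row equations reduce exactly to those of $\LT$. The eigenvector identity then replaces any determinant computation. Equivalently, one can argue through Proposition~\ref{p:union}: the entries $v_j$ of $w$ repeat $r$ times, so $Z_{(i,p)}(w)=\frac1n\sum_{j\ne i}r\bigl[0,1-\frac{v_j}{v_i}\bigr]$. The copies $(i,q)$ with $q\ne p$ contribute degenerate segments $[0,0]$, so this set equals $Z_i(v)$, and the intersection over $\supp w$ coincides with the intersection over $\supp v$.
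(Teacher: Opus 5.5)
Your proposal is correct and is essentially the paper's proof. The paper also forms $D\otimes I_d-\frac1d A\otimes J_d$: copies of the same vertex are not joined, each cross arc gets weight $a_{ij}/d$, and the diagonal is $D\otimes I_d$. It then checks that $x\otimes\mathbf 1_d$ is an eigenvector with eigenvalue $\lambda$. Your closing remark via Proposition~\ref{p:union} is also valid, provided $Z_i(v)$ there is read as the order-$m$ zonotope with factor $\frac1m$.
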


\begin{pf}
Let $\LT=D-A\in\cL_m$ with $\LT x=\lambda x$, and let $d=n/m$. Replace each vertex by $d$ copies, give every arc $(i,s)\to(j,s')$ with $i\ne j$ the entry $-a_{ij}/d\in\bigl[-\frac1n,0\bigr]$, and keep the diagonal $D\otimes I_d$. The resulting matrix $D\otimes I_d-\frac1d A\otimes J_d$ lies in $\cL_n$, and it maps $x\otimes\mathbf 1_d$ to $\lambda\,x\otimes\mathbf 1_d$.
\qed\end{pf}

Consequently the conjecture would have implied $S_m\subseteq S_n$ for $m\mid n$.
This inclusion does hold.

\begin{proposition}\label{p:nested}
Let $m\mid n$, $2\le m<n;$ let $\lambda^{(m)}_k$ and $\lambda^{(n)}_k$ denote the upper-boundary vertices \textup{\eqref{lambda_k}} of $S_m$ and $S_n$. Then\textup{:}
\begin{enumerate}
\item[\textup{(i)}] $S_m\subseteq S_n;$
\item[\textup{(ii)}] if $n=2m$, then each vertex $\lambda^{(m)}_l$, $1\le l\le m-2$, lies in the relative interior of the edge $\bigl[\lambda^{(n)}_{2l},\lambda^{(n)}_{2l+1}\bigr]$ of~$S_n;$
\item[\textup{(iii)}] if $n\ge3m$, then $S_m\setminus\{0,1\}$ lies in the interior of~$S_n$.
\end{enumerate}
\end{proposition}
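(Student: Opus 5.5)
The plan is to work with the zonotope representation of Proposition~\ref{p:zono} throughout. Put $d=n/m$ and $\mu_n=e^{-2\pi i/n}$, so that $\mu_m=\mu_n^{d}$ and
\[
g^{(m)}_r=\frac1m\bigl(1-\mu_n^{dr}\bigr)=d\,g^{(n)}_{dr},\qquad 1\le r\le m-1 .
\]

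For (i) I would use the identity
\[
1-\mu^{dr}=\sum_{q=0}^{d-1}\mu^{qr}\bigl(1-\mu^{r}\bigr)
\]
with $\mu=\mu_n$, which is not directly a sum of generators. A cleaner route is through support functions, via \eqref{e:hZ}. I would show $h_{S_m}\le h_{S_n}$ pointwise. Write $h_{S_m}(\theta)=\sum_r(\operatorname{Re}e^{-i\theta}g^{(m)}_r)_+$. Each $g^{(m)}_r=\frac1m(1-\mu_m^r)$ should then be expressed as an average over the $d$ generators $g^{(n)}_{s}$ with $s\equiv$ something, using the $n$-roots lying over the $m$-root.

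Concretely, I would use the known identity that the $n$-th roots $\zeta$ with $\zeta^d=\mu_m^r$ average to $0$ for $d\ge2$. The better fact is the partition of $\{1,\dots,n-1\}$ into residue classes $s\equiv j\pmod d$. For fixed $j$, the sum $\frac1n\sum_{s\equiv j}(1-\mu_n^s)$ is a combination of generators with coefficients in $\{0,1\}$.

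The simplest concrete plan, however, is direct. By Corollary~\ref{c:circ}, every point of $S_m$ is an eigenvalue of a circulant $\LT\in\cL_m$. The blow-up of Proposition~\ref{p:blowup} produces a matrix in $\cL_n$ that is again circulant after the relabeling $(i,s)\mapsto i+ms$, with eigenvector $f_d$. Its eigenvalue at that eigenvector is $\sum_{s}t_s g^{(n)}_s$ with all $t_s\in[0,1]$, so it lies in $S_n$ by Proposition~\ref{p:zono}. I would verify that the entries of $\frac1dA\otimes J_d$ become circulant coefficients bounded by $\frac1n$. Since $\frac1n(1-\mu_n^{ds})\cdot$ matches $g^{(n)}_{ds}$, this gives (i).

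For (ii) and (iii), I would use the normals. The vertex $\lambda^{(m)}_l$ has outward normals in a cone, and I would evaluate $d^{(n)}_k(\lambda^{(m)}_l)$ using the explicit form $\operatorname{Re}(\bar\nu_k g_s)=|g_s|\sin\frac{\pi(k+1-s)}n$ from Lemma~\ref{l:edgem}. For $n=2m$, I would check that $\lambda^{(m)}_l=\sum_{s\le 2l}g^{(n)}_s+\frac12 g^{(n)}_{2l+1}$. This should follow from the half-angle identity $1-\mu^{2r}=(1-\mu^{r})(1+\mu^r)$, realized in the blow-up as the coefficient $\frac12$ on the middle generator and $1$ on the others, which places the point at the midpoint of the edge.

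For $n\ge3m$, the goal is to show that in every direction $\theta$ not normal to the edges at $0$ or $1$ the inequality $h_{S_m}(\theta)<h_{S_n}(\theta)$ is strict. The obstacle I expect is exactly this strictness. The approach is to write $g^{(m)}_r=\frac1m\sum$ of $d$ consecutive $n$-generators times weights and note that, when $d\ge3$, each group spans at least three distinct directions. Then for any $\theta$ the positive part is strictly subadditive unless all directions are on the same side, and that can only happen near the extreme normals that correspond to $0$ and $1$.
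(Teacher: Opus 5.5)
Your blow-up argument for (i) does not prove the inclusion. After the relabeling $(i,s)\mapsto i+ms$, the blow-up of $\frac1m\sum_r t_r(I-Q_m^r)$ is the circulant $\frac1n\sum_{k\not\equiv0\,(m)}t_{k\bmod m}(I-Q_n^k)$. Its eigenvalue at $f_d$, the one equal to $z=\sum_r t_rg^{(m)}_r$, is
\[
\frac1n\sum_{k\not\equiv0\,(m)}t_{k\bmod m}\bigl(1-\mu_n^{dk}\bigr)=\sum_{r=1}^{m-1}d\,t_r\,g^{(n)}_{dr},
\]
because $k\mapsto dk\bmod n$ is $d$-to-one onto the nonzero multiples of $d$. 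The coefficients $d\,t_r$ range over $[0,d]$, so this is not a point of the parametrization in Proposition~\ref{p:zono}. The formula $\sum_s\tau_sg^{(n)}_s$ with $\tau_s\in[0,1]$ is the eigenvalue at $f_1$, which is a different point. In fact, the eigenvalues at $f_d$ of all circulants in $\cL_n$ form exactly $S_m$, so showing that they lie in $S_n$ is statement (i) itself. The blow-up only gives $S_m\subseteq\Lambda_n$ (Proposition~\ref{p:blowup}).

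The support-function route you set aside is the one that works, but by regrouping terms rather than splitting generators. Sorting the indices in \eqref{e:hS} (with the zero term $j=0$ added) by residue modulo $d$ gives $n\,h_{S_n}(\theta)=\sum_{r=0}^{d-1}A(\theta+2\pi r/n)$, where $A(\alpha)=\sum_{l=0}^{m-1}\bigl(\cos\theta-\cos(\alpha+2\pi l/m)\bigr)_+$ and $m\,h_{S_m}(\theta)=A(\theta)$. It therefore suffices to show $A(\alpha)\ge A(\theta)$. Write $A(\alpha)=\frac12\bigl(m\cos\theta+D(\alpha)\bigr)$ with $D(\alpha)=\sum_l\bigl|\cos(\alpha+2\pi l/m)-\cos\theta\bigr|$. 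Then $D$ is concave between consecutive breakpoints, so its minimum is attained where $\alpha\equiv\pm\theta\pmod{2\pi/m}$, and there $D=D(\theta)$.

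For (ii), your identity $\lambda^{(m)}_l=\sum_{s\le2l}g^{(n)}_s+\frac12g^{(n)}_{2l+1}$ is false in general. For $n=8$, $l=1$, one has $\lambda^{(4)}_1=\frac14(1+i)=\lambda^{(8)}_2+(\sqrt2-1)\,g^{(8)}_3$. The correct computation is $\lambda^{(m)}_l-\lambda^{(n)}_{2l}=\mu(1-\mu^{2l})/\bigl(n(1+\mu)\bigr)$ with $\mu=\mu_n$. This has the argument $\frac\pi2-\frac{(2l+1)\pi}n$ of $g^{(n)}_{2l+1}$ and modulus ratio $t_l=\sin\frac{2\pi l}n\big/\bigl(\sin\frac{2\pi l}n+\sin\frac{(2l+2)\pi}n\bigr)\in(0,1)$, which equals $\frac12$ only when $2l+1=m$.

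For (iii), the decomposition you plan to use cannot exist. Suppose $g^{(m)}_r=\sum_sc_{rs}g^{(n)}_s$ with $c_{rs}\ge0$ and $\sum_rc_{rs}\le1$; this is what subadditivity of $x_+$ needs in order to compare with $h_{S_n}$. Summing real parts and using $\sum_rg^{(m)}_r=\sum_sg^{(n)}_s=1$ and $\operatorname{Re}g^{(n)}_s>0$ forces $\sum_rc_{rs}=1$ for every $s$. Hence $\sum_{r,s}c_{rs}=n-1$, whereas groups of $d$ generators give at most $(m-1)d=n-d$. Admissible decompositions need larger, overlapping groups, e.g., $g^{(3)}_1=g^{(6)}_1+g^{(6)}_2+\frac12g^{(6)}_3$. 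Even then, the number of directions per group does not decide strictness: in this example each group has three directions, yet $h_{S_3}=h_{S_6}$ in the vertical direction.

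The strictness you need is the sharp form of the minimization above. If neither $0$ nor $1$ lies on the face of $S_m$ with outward normal $e^{i\theta}$, then $D(\theta)\ne\pm m\cos\theta$. This rules out $D$ being constant on an arc between breakpoints, so concavity gives $D(\alpha)>D(\theta)$ unless $\alpha\equiv\pm\theta\pmod{2\pi/m}$. Since $d\ge3$, some $r\in\{1,\dots,d-1\}$ avoids both congruences, which yields $h_{S_m}(\theta)<h_{S_n}(\theta)$. The faces through $0$ or $1$ must be handled separately, by comparing the arguments $\frac\pi2-\frac\pi m<\frac\pi2-\frac\pi n$ of the edges of $S_m$ and $S_n$ at $0$.
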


\begin{pf}
(i) Let $d=n/m$, fix $\theta$, and put $c=\cos\theta$.
By~\eqref{e:hS}, with the vanishing term $j=0$ added,
\[
n\,h_{S_n}(\theta)=\sum_{j=0}^{n-1}\Bigl(c-\cos\bigl(\theta+\tfrac{2\pi j}{n}\bigr)\Bigr)_+ .
\]
Writing $j=r+dl$ with $0\le r\le d-1$ and $0\le l\le m-1$, we obtain
\[
n\,h_{S_n}(\theta)=\sum_{r=0}^{d-1}A\Bigl(\theta+\frac{2\pi r}{n}\Bigr),
\qquad
A(\alpha)=\sum_{l=0}^{m-1}\bigl(c-\cos\alpha_l\bigr)_+,\qquad
\alpha_l=\alpha+\frac{2\pi l}{m},
\]
and $m\,h_{S_m}(\theta)=A(\theta)$. Hence it suffices to show that $A(\alpha)\ge A(\theta)$ for all $\alpha$: then $n\,h_{S_n}(\theta)\ge d\,A(\theta)=n\,h_{S_m}(\theta)$, and since $\theta$ is arbitrary, $S_m\subseteq S_n$.

Since $\sum_l\cos\alpha_l=0$, we have
$\sum_l(c-\cos\alpha_l)=mc$, and
$x_+=\frac12(x+|x|)$ gives
\[
A(\alpha)=\frac12\bigl(mc+D(\alpha)\bigr),
\qquad
D(\alpha)=\sum_{l=0}^{m-1}
|\cos\alpha_l-c|.
\]
Geometrically, $D(\alpha)$ is the sum of the distances from the vertices $e^{i\alpha_l}$ of a regular $m$-gon to the line
$\operatorname{Re}z=c$.

Call $\alpha$ a breakpoint if $\cos\alpha_l=c$ for some $l$.
On every arc between consecutive breakpoints the signs\, $\varepsilon_l=\operatorname{sign}(\cos\alpha_l-c)$\, are constant, and hence
\[
D''(\alpha)
=
-\sum_l\varepsilon_l\cos\alpha_l
=
-2\sum_{\varepsilon_l=1}\cos\alpha_l.
\]
The last sum is nonnegative. If $c\ge0$, each of its terms is
positive. If $c<0$, then
\[
\sum_{\varepsilon_l=1}\cos\alpha_l
=
-\sum_{\varepsilon_l=-1}\cos\alpha_l\ge0,
\]
because $\cos\alpha_l<c<0$ whenever $\varepsilon_l=-1$.
Thus $D$ is concave on every arc between consecutive breakpoints. Since it extends continuously to the endpoints, its minimum on each such closed arc is attained at an endpoint.
Hence the global minimum of $D$ is attained at a breakpoint.

At a breakpoint,
\[
\alpha\equiv\theta
\quad\text{or}\quad
\alpha\equiv-\theta
\pmod{2\pi/m}.
\]
The corresponding configurations are mirror images under complex conjugation, which preserves $D$. Therefore,\,
$\min_\alpha D(\alpha)=D(\theta),$\, and hence $A(\alpha)\ge A(\theta)$.

(ii) Let $n=2m$ and $1\le l\le m-2$. Since
$g_{2l+1}=\lambda^{(n)}_{2l+1}-\lambda^{(n)}_{2l}$ is the edge vector of $\bigl[\lambda^{(n)}_{2l},\lambda^{(n)}_{2l+1}\bigr]$, it suffices to show
\begin{equation}\label{e:inscribed}
\lambda^{(m)}_l=\lambda^{(n)}_{2l}+t_l\,g_{2l+1}
\quad\text{with \,} 0<t_l<1,
\quad\text{where}\quad
t_l=\frac{\sin\frac{2\pi l}{n}}{\sin\frac{2\pi l}{n}+\sin\frac{(2l+2)\pi}{n}}.
\end{equation}

The inequalities $0<t_l<1$ hold because $0<\frac{2\pi l}{n}<\frac{(2l+2)\pi}{n}<\pi$, so both sines in~\eqref{e:inscribed} are positive.

To prove the equality in~\eqref{e:inscribed}, let $\mu=e^{-2\pi i/n}$, so that $\mu^2=e^{-2\pi i/m}$. By the definition of the vertices,
\begin{equation}\label{e:diff}
\lambda^{(m)}_l-\lambda^{(n)}_{2l}
=\frac1n\Bigl(2\sum_{j=1}^{l}(1-\mu^{2j})-\sum_{k=1}^{2l}(1-\mu^k)\Bigr)
=\frac{1-\mu}n\sum_{j=1}^{l}\mu^{2j-1}
=\frac{\mu(1-\mu^{2l})}{n(1+\mu)} .
\end{equation}
Using $1-e^{-i\varphi}=2\sin\frac\varphi2\,e^{i(\pi/2-\varphi/2)}$ and $1+\mu=2\cos\frac\pi n\,e^{-i\pi/n}$, we write both the right-hand side of~\eqref{e:diff} and $g_{2l+1}=\frac1n(1-\mu^{2l+1})$ in polar form:
\[
\frac{\mu(1-\mu^{2l})}{n(1+\mu)}
=\frac{\sin\frac{2\pi l}{n}}{n\cos\frac\pi n}\,e^{\,i\bigl(\frac\pi2-\frac{(2l+1)\pi}{n}\bigr)},
\qquad
g_{2l+1}=\frac2n\sin\frac{(2l+1)\pi}{n}\,e^{\,i\bigl(\frac\pi2-\frac{(2l+1)\pi}{n}\bigr)}.
\]
The arguments coincide, and the ratio of the moduli is $\sin\frac{2\pi l}{n}\big/\bigl(2\cos\frac\pi n\sin\frac{(2l+1)\pi}{n}\bigr)$, which
equals $t_l$ because $\sin\frac{2\pi l}{n}+\sin\frac{(2l+2)\pi}{n}=2\cos\frac\pi n\sin\frac{(2l+1)\pi}{n}$. This proves~\eqref{e:inscribed}.
By conjugation, the analogous statement holds for the lower-boundary vertices.

(iii) If $m=2$, then $S_2=[0,1]$, and the assertion follows from $f_n(x)>0$ for $0<x<1$. Assume $m\ge3$. Let $n\ge3m$, and suppose, to the contrary, that some $p\in S_m\setminus\{0,1\}$ lies on $\partial S_n$. A supporting line of $S_n$
through $p$ also supports $S_m$ at $p$, since $S_m\subseteq S_n$. Hence, for its outward normal $e^{i\theta}$,
\begin{equation}\label{e:contact}
h_{S_m}(\theta)=h_{S_n}(\theta)=\operatorname{Re}(e^{-i\theta}p).
\end{equation}
Let $E$ be the set of maximizers of $\operatorname{Re}(e^{-i\theta}z)$ on $S_m$; then $p\in E$. We show that \eqref{e:contact} is impossible, considering two cases.

\emph{Case}~1: $0\in E$ or $1\in E$. Since also $p\in E\setminus\{0,1\}$, the set $E$ is an edge of $S_m$ incident to $0$ or~$1$. By the symmetries
$z\mapsto\bar z$ and $z\mapsto1-\bar z$ of $S_m$ and $S_n$, we may assume that $E=[0,\lambda^{(m)}_1]$. The edges of $S_m$ and $S_n$ issuing from $0$ into the upper half-plane have arguments $\frac\pi2-\frac\pi m$ and $\frac\pi2-\frac\pi n$, respectively. Since $\frac\pi2-\frac\pi n>\frac\pi2-\frac\pi m$, the edge $[0,\lambda^{(n)}_1]$ of $S_n$ leaves $0$ on the outer side of the line through $E$; hence this line does not support $S_n$, contrary to \eqref{e:contact}.

\emph{Case}~2: $0,1\notin E$. We use the notation of the proof of~(i): $c=\cos\theta$, $m\,h_{S_m}(\theta)=A(\theta)=\frac12\bigl(mc+D(\theta)\bigr)$,
$n\,h_{S_n}(\theta)=\sum_{r=0}^{d-1}A(\theta+2\pi r/n)$, and
$\min_\alpha D(\alpha)=D(\theta)$. Since $0\notin E$ and $1\notin E$, we have
$h_{S_m}(\theta)\ne0$ and $h_{S_m}(\theta)\ne\cos\theta$, i.e., $D(\theta)\ne\pm mc$.

We claim that $D(\alpha)>D(\theta)$ whenever $\alpha\not\equiv\pm\theta\pmod{2\pi/m}$.
Indeed, such an $\alpha$ is not a breakpoint, so it lies inside an arc on which $D$ is concave. If $D(\alpha)=D(\theta)=\min D$, then $D$ is constant on this arc.
But $D$ is constant on an arc only if all vertices $e^{i\alpha_l}$ lie on one side of the line $\operatorname{Re}z=c$, i.e., only if $D=\pm mc$ there. This would give $D(\theta)=\pm mc$, a contradiction.

Now, for $r=1,\dots,d-1$, we have $\theta+2\pi r/n\not\equiv\theta$, and $\theta+2\pi r/n\equiv-\theta \pmod{2\pi/m}$ for at most one such~$r$. Since
$d\ge3$, there is an $r$ with $A(\theta+2\pi r/n)>A(\theta)$, and therefore $n\,h_{S_n}(\theta)>d\,A(\theta)=n\,h_{S_m}(\theta)$, contrary to \eqref{e:contact}.

Thus no point of $S_m\setminus\{0,1\}$ lies on $\partial S_n$, which proves~(iii).
\qed\end{pf}

Without the divisibility assumption the inclusion can fail; for example, $S_3\not\subseteq S_4$.

\begin{rmk}
Proposition~\ref{p:blowup} yields a natural nested sequence $\Lambda_{k!}\subseteq\Lambda_{(k+1)!}$ and the equality
\[
\bigcup_{n\ge2}\Lambda_n=\bigcup_{k\ge2}\Lambda_{k!}.
\]
Thus the large-order geometry of $\bigcup_{n\ge2}\Lambda_n$ may be studied through the increasing family $(\Lambda_{k!})_{k\ge2}$.
\end{rmk}

\begin{corollary}\label{c:union}
$\bigcup_{n\ge2}S_n=\bigcup_{k\ge2}S_{k!};$ \,the set $U=\bigl(\bigcup_{n\ge2}S_n\bigr)\setminus\{0,1\}$ is open and convex.
\end{corollary}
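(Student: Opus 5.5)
The first assertion follows from Proposition~\ref{p:nested}(i). Every $n\ge2$ divides $k!$ for $k=n$, so $S_n\subseteq S_{n!}$, and hence $\bigcup_n S_n\subseteq\bigcup_k S_{k!}$. The reverse inclusion is trivial. Since $k!\mid(k+1)!$, the family $(S_{k!})_{k\ge2}$ is increasing, again by Proposition~\ref{p:nested}(i). So $\bigcup_n S_n$ is the union of an increasing chain of convex sets.

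\emph{Convexity.} Two points of $\bigcup_k S_{k!}$ lie in a common $S_{k!}$, because the chain is increasing. That set is convex, so $\bigcup_n S_n$ is convex. To pass to $U$, I need that $0$ and $1$ are not interior to any segment of $\bigcup_n S_n$. By Remark~\ref{r:sym} the set is symmetric about $\tfrac12$, so it suffices to treat $0$. Every $S_n$ lies in the closed half-plane $\operatorname{Re}z\ge0$, and $0$ is the only point of $S_n$ on the line $\operatorname{Re}z=0$, because $\operatorname{Re}g_m>0$ for all $m$. Hence $0$ is an extreme point of the union, and the same holds for $1$. Now take $z,w\in U$. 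The segment $[z,w]$ lies in the union. Suppose it contains $0$. Then, because $0$ is extreme, $0$ must be an endpoint of the segment, which is impossible since $z,w\ne0$. The same argument excludes $1$, so $[z,w]\subseteq U$ and $U$ is convex.

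\emph{Openness.} Let $p\in U$, say $p\in S_n$. By Proposition~\ref{p:nested}(iii) with $m=n$ and the multiple $3n$, the point $p\in S_n\setminus\{0,1\}$ lies in the interior of $S_{3n}$. That interior is contained in $\bigcup S$. Shrinking the neighbourhood so that it avoids $0$ and $1$ puts it inside $U$. So $p$ is an interior point of $U$, and $U$ is open.

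The only delicate points are the extremality of $0$ and $1$ and the appeal to (iii). Case $m=2$ of (iii) is covered there as well, so no obstacle remains. The main care needed is to treat $S_2=[0,1]$ correctly in the openness step, and (iii) explicitly includes this case.
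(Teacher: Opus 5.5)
Your proof is correct and follows the paper's argument essentially step by step: Proposition~\ref{p:nested}(i) gives the increasing chain $(S_{k!})$, hence the equality of unions and convexity; $0$ and $1$ are extreme points because the union lies in the strip $0\le\operatorname{Re}z\le1$ and meets its boundary lines only there; and Proposition~\ref{p:nested}(iii) with the multiple $3m$ gives openness. Two small differences: you justify the extremality explicitly via $\operatorname{Re}g_m>0$, and you need the trivial remark that for $n=2=2!$ the inclusion $S_n\subseteq S_{n!}$ is an equality, since (i) assumes $m<n$.
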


\begin{pf}
If $p\in S_m\setminus\{0,1\}$ with $m\ge2$, then $p$ lies in the interior of $S_{3m}$ by Proposition~\ref{p:nested}(iii), and this interior is contained in~$U$; hence $U$ is open. By Proposition~\ref{p:nested}(i), $S_n\subseteq S_{N}$ whenever $n\mid N$, so $\bigcup_{n\ge2}S_n=\bigcup_{k\ge2}S_{k!}$ is the union of an increasing chain of convex sets and is therefore convex. Finally, $0$ and $1$ are extreme points of this union, since it lies in the strip $0\le\operatorname{Re}z\le1$ and meets the lines $\operatorname{Re}z=0$ and $\operatorname{Re}z=1$ only at $0$ and~$1$; removing them preserves convexity.
\qed\end{pf}

\subsection{Geometry of $\Lambda_n$}

Recall that a subset of $\R^d$ is called \emph{semialgebraic\/} if it is a finite union of sets, each defined by finitely many polynomial equalities and inequalities.

\begin{proposition}\label{p:geometry}
For all $n\ge2$, the set $\Lambda_n$ is compact and semialgebraic. It is star-shaped with respect to both $0$ and $1$.
\end{proposition}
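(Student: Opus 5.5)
For compactness and semialgebraicity I would treat $\cL_n$ as the cube $[0,\frac1n]^{n(n-1)}$ of the entries $a_{ij}$, which is compact. Write $\lambda=x+iy$. Then
\[
\Lambda_n=\bigl\{(x,y)\in\R^2:\ \exists\,a\in[0,\tfrac1n]^{n(n-1)}\ \text{with}\ \det\bigl((x+iy)I-\LT(a)\bigr)=0\bigr\}.
\]
The condition $\det=0$ means that the real and imaginary parts of a polynomial in $(x,y,a)$ both vanish. So $\Lambda_n$ is the projection to $\R^2$ of a semialgebraic subset of $\R^{2+n(n-1)}$, and it is semialgebraic by the Tarski--Seidenberg theorem. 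For boundedness, every eigenvalue satisfies $|\lambda|\le\|\LT\|_\infty\le 2\frac{n-1}{n}$ by the Gershgorin discs. The projected set is therefore the image, under a continuous map, of a closed and bounded set: it is the set of $(x,y,a)$ with $|x+iy|\le2$, $a$ in the cube, and $\det=0$. Hence $\Lambda_n$ is compact.

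For star-shapedness with respect to $0$, take $\lambda\in\Lambda_n$ with $\LT v=\lambda v$, and let $s\in[0,1]$. The matrix $s\LT$ again has zero row sums and off-diagonal entries in $[-\frac{s}{n},0]\subseteq[-\frac1n,0]$, so $s\LT\in\cL_n$. Since $s\LT v=s\lambda v$, we get $s\lambda\in\Lambda_n$. Thus the segment $[0,\lambda]\subseteq\Lambda_n$.

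For star-shapedness with respect to $1$, I would use Remark~\ref{r:sym}: $\Lambda_n=1-\Lambda_n$. Given $\lambda\in\Lambda_n$, we have $1-\lambda\in\Lambda_n$, so by the previous step $s(1-\lambda)\in\Lambda_n$ for all $s\in[0,1]$. Applying the reflection again gives $1-s(1-\lambda)\in\Lambda_n$, and these points fill the segment $[1,\lambda]$.

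There is no real obstacle here; the only care needed is to phrase the projection argument correctly over $\R$ (splitting into real and imaginary parts) and to note the Gershgorin bound that gives boundedness.
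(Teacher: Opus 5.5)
Your proof is correct and follows essentially the same route as the paper. You scale $\LT\mapsto s\LT$ and use the reflection $\Lambda_n=1-\Lambda_n$ for star-shapedness, Tarski--Seidenberg for semialgebraicity, and compactness of the weight cube with a uniform eigenvalue bound for compactness; the only cosmetic difference is that you encode the eigenvalue condition through the real and imaginary parts of $\det\bigl((x+iy)I-\LT(a)\bigr)$ throughout, whereas the paper uses a normalized real eigenvector system for semialgebraicity, and either works.
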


\begin{pf}
If $\lambda\in\Lambda_n$ is realized by $\LT\in\cL_n$, then $t\lambda$ is realized by $t\LT\in\cL_n$ for every $0\le t\le1$.
Thus $\Lambda_n$ is star-shaped with respect to $0$. By the symmetry $\Lambda_n=1-\Lambda_n$ from Remark~\ref{r:sym}, it is also star-shaped with respect to $1$.

For semialgebraicity, write $\lambda=x+iy$ and an eigenvector as $v=u+iw$, normalized by
\[
\sum_i\bigl(u_i^2+w_i^2\bigr)=1.
\]
Let $\LT(a)$ denote the Laplacian determined by the variables $a_{ij}$. The conditions
\[
0\le a_{ij}\le\frac1n,\qquad
\LT(a)u=xu-yw,\qquad
\LT(a)w=yu+xw
\]
form a finite system of polynomial equalities and inequalities in real variables. Hence $\Lambda_n$, obtained by projection onto
$(x,y)$, is semialgebraic by the Tarski--Seidenberg theorem \cite{BochnakCosteR98AG}.
The set $\cL_n$ is compact, and all its eigenvalues lie in a common compact disk. Hence
\[
\{(\LT,\lambda):\LT\in\cL_n,\ \det(\lambda I-\LT)=0\}
\]
is compact, and its projection onto the $\lambda$-coordinate is $\Lambda_n$. Thus $\Lambda_n$ is compact.
\qed\end{pf}

The boundary $\partial\Lambda_n$ is semialgebraic and therefore admits a finite stratification into smooth semialgebraic arcs and points; see~\cite{BochnakCosteR98AG}.

For a set $K\subseteq\C$, let
\[
\ker_\star K
=
\{a\in K:[a,z]\subseteq K\text{ for every }z\in K\}
\]
denote its \emph{star kernel}.

\begin{corollary}\label{c:profile}
For every $n\ge2$\textup{:}
\begin{enumerate}
\item[\textup{(i)}]   $[0,1]\subseteq\ker_\star\Lambda_n.$
\item[\textup{(ii)}]  There is a function $\eta_n:[0,1]\to[0,\infty)$ such that\, $\Lambda_n=\{x+iy:0\le x\le1,\ |y|\le \eta_n(x)\}.$
\item[\textup{(iii)}] $\eta_n$ is continuous$,$ $\eta_n(x)=\eta_n(1-x),$\, and, for\, $0<x<y<1,$
\[
\frac{1-y}{1-x}\,\eta_n(x)
\le \eta_n(y)
\le
\frac{y}{x}\,\eta_n(x).
\]
\item[\textup{(iv)}] $\eta_n(x)/x$ is nonincreasing and $\eta_n(x)/(1-x)$ is nondecreasing.
\end{enumerate}
\end{corollary}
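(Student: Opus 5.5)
The whole statement will follow from one geometric fact: for every $z\in\Lambda_n$, the closed triangle $T(z)=\conv\{0,1,z\}$ lies in $\Lambda_n$. To prove it, I would use Proposition~\ref{p:geometry}. Since $\Lambda_n$ is star-shaped with respect to $1$, the segment $[1,z]$ lies in $\Lambda_n$. Since it is star-shaped with respect to $0$, the segment $[0,w]$ lies in $\Lambda_n$ for every $w\in[1,z]$. The union of these segments $[0,w]$ over $w\in[1,z]$ is exactly $T(z)$. Taking $z=1$ gives $[0,1]\subseteq\Lambda_n$. For $a\in[0,1]$ and $z\in\Lambda_n$, the segment $[a,z]$ lies in the convex set $T(z)\subseteq\Lambda_n$. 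This proves (i).

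For (ii), I first need the strip bound $\Lambda_n\subseteq\{0\le\operatorname{Re}z\le1\}$. By Gershgorin's theorem, every eigenvalue of $\LT\in\cL_n$ lies in some disc centred at $\ell_{ii}=\sum_{j\ne i}a_{ij}\ge0$ with radius $\ell_{ii}$, so $\operatorname{Re}\lambda\ge0$. The symmetry $\Lambda_n=1-\Lambda_n$ of Remark~\ref{r:sym} then gives $\operatorname{Re}\lambda\le1$. Next, fix $x\in[0,1]$ and take any $x+iy\in\Lambda_n$. The vertical segment from $x$ to $x+iy$ lies in $T(x+iy)\subseteq\Lambda_n$. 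By conjugation invariance, so does the segment from $x$ to $x-iy$. Hence each vertical slice of $\Lambda_n$ is an interval symmetric about the real axis and containing $x$. It is closed by compactness (Proposition~\ref{p:geometry}). I would therefore define $\eta_n(x)$ as the largest $y\ge0$ with $x+iy\in\Lambda_n$, which gives (ii).

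For (iii), the identity $\eta_n(x)=\eta_n(1-x)$ follows from combining $z\mapsto1-z$ with conjugation. For the inequalities, let $0<x<y<1$ and read off heights of triangles at a given abscissa.
\begin{itemize}
\item The triangle $T(x+i\eta_n(x))$ has height $\eta_n(x)\frac{1-y}{1-x}$ at abscissa $y$. This gives the left inequality.
\item The triangle $T(y+i\eta_n(y))$ has height $\eta_n(y)\frac{x}{y}$ at abscissa $x$. Hence $\eta_n(x)\ge\frac{x}{y}\eta_n(y)$, which is the right inequality.
\end{itemize}
Continuity on $(0,1)$ then follows by squeezing. As $y\to x$ from either side, both bounding factors tend to $1$, so $\eta_n(y)\to\eta_n(x)$. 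At the endpoints, the right inequality with $x$ replaced by a fixed point and $y\to0$ gives $\eta_n(y)\le Cy\to0=\eta_n(0)$. Here $\eta_n(0)=0$ because $\operatorname{Re}\lambda=0$ forces $\lambda=0$: by Gershgorin, a point of a disc centred at $\ell_{ii}$ with radius $\ell_{ii}$ can have zero real part only at the origin. The endpoint $1$ then follows by symmetry. Finally, (iv) is just a rewriting of the two inequalities in (iii): $\eta_n(y)/y\le\eta_n(x)/x$ and $\eta_n(y)/(1-y)\ge\eta_n(x)/(1-x)$ for $x<y$.

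The only step needing care is the triangle-filling argument together with the strip bound. Once those are in place, everything else is elementary planar geometry.
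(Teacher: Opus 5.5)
Your parts (i), (ii), (iv), the symmetry $\eta_n(x)=\eta_n(1-x)$, and continuity on $(0,1)$ follow the paper's proof almost exactly; your triangle-filling is the paper's convexity-of-the-star-kernel argument with $a=0$, $b=1$. The gap is \emph{continuity at the endpoints}.

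The right inequality $\eta_n(y)\le\frac yx\,\eta_n(x)$ holds only for $x<y$. If you fix $x$ and let $y\to0$, you leave its range of validity. With the roles assigned correctly (fixed $t$, and $s<t$), the triangle $T(t+i\eta_n(t))$ gives only the lower bound $\eta_n(s)\ge\frac st\,\eta_n(t)$. The other inequality gives the upper bound $\eta_n(s)\le\frac{1-s}{1-t}\,\eta_n(t)$, which does not tend to $0$.

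No argument from (iii) alone can work. Take the function equal to a constant $c>0$ on $(0,1)$ and to $0$ at $0$ and $1$. It satisfies both inequalities, the symmetry, and (iv), yet it is discontinuous at the endpoints. Triangle-filling only ever produces \emph{lower} bounds for $\eta_n$, while continuity at $0$ needs an \emph{upper} bound. The bound $\eta_n(y)\le Cy$ you state is actually true, because $[0,\lambda_1]\subseteq\partial\Lambda_n$ by Theorems~5 and~6 of~\cite{AgaChe05LAA}. But that is a deep input, not something your argument yields.

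The fix is short, and you already have the ingredients. One option is to use the Gershgorin discs quantitatively, as the paper does. Each disc is $|z-r|\le r$ with $r=\ell_{ii}\le\rho=1-\frac1n$. Hence every eigenvalue $x+iy$ satisfies $y^2\le 2rx-x^2\le 2\rho x-x^2$, so $\eta_n(x)\le\sqrt{2\rho x-x^2}\to0$ as $x\to0^+$.

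Alternatively, use closedness of $\Lambda_n$. If $x_k\to0^+$ and $\eta_n(x_k)\to L$ along a subsequence, then $iL\in\Lambda_n$. Your observation that $\operatorname{Re}\lambda=0$ forces $\lambda=0$ then gives $L=0$. The endpoint $1$ follows by symmetry in either case.
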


\begin{pf}
(i) If $a,b\in\ker_\star K$ and $z\in K$, then every point of $\conv\{a,b,z\}$ lies on a segment $[a,w]$ with $w\in[b,z]$ and $w\in K$; hence $\conv\{a,b,z\}\subseteq K$. Thus $[a,b]\subseteq\ker_\star K$, i.e., the star kernel is convex. By Proposition~\ref{p:geometry}, $\{0,1\}\subseteq\ker_\star\Lambda_n$; therefore $[0,1]\subseteq\ker_\star\Lambda_n$.

(ii) The Gershgorin disks give $\operatorname{Re}\lambda\ge0$ for
$\lambda\in\Lambda_n$, and the symmetry
$\Lambda_n=1-\Lambda_n$ gives $\operatorname{Re}\lambda\le1$.
Together with conjugation symmetry, compactness, and $[0,1]\subseteq\ker_\star\Lambda_n$, this gives the stated representation of $\Lambda_n$ by $\eta_n$.

(iii) For $z=x+i\eta_n(x)$, the triangle $\conv\{0,1,z\}$ lies in $\Lambda_n$. Reading its height at $y>x$ gives the first inequality. Applying the same argument to $y+i\eta_n(y)$ and reading the height at $x$ gives the second. The symmetry $\eta_n(x)=\eta_n(1-x)$ follows from $\Lambda_n=1-\Lambda_n=\overline{\Lambda_n}$.
The displayed inequalities give continuity on $(0,1)$.
All Gershgorin disks of matrices in $\cL_n$ lie in the disk $|z-\rho|\le\rho$, $\rho=1-\frac1n$; hence $\eta_n(x)\le\sqrt{2\rho x-x^2}\to0=\eta_n(0)$ as $x\to0^+$, and, by symmetry, $\eta_n$ is also continuous at~$1$.

(iv) This follows from the inequalities in (iii).
\qed\end{pf}

By Section~\ref{s:polygzono}, $S_n=\{x+iy: 0\le x\le1,\ |y|\le f_n(x)\}$, so $f_n\le\eta_n$, and the
conjecture of~\cite{AgaChe05LAA} amounts to $\eta_n=f_n$; Theorem~\ref{t:counter}
shows that $\eta_7>f_7$ near $\operatorname{Re}\lambda^\ast$\nb{, and Theorem~\ref{t:allfive} shows that $\eta_n\ne f_n$ for every $n\ge5$}.
In this representation, $\Lambda_n$ is convex if and only if $\eta_n$ is concave. \nv{By~\cite[Theorem~4]{Liu026}, $\eta_n$ is not concave for $n\ge5$.}

\begin{rmk}\label{r:edgesbd}
\nb{By Corollary~\ref{c:profile}(i), $\conv\{0,1,z\}\subseteq\Lambda_n$ for every
$z\in\Lambda_n$. For the points $z=\lambda(t)$ of Theorem~\ref{t:edges} we have
$z-\lambda_m=t\,g_{m+1}+O(t^2)$ and $0<d_m(z)=O(t^2)$; hence the line through
$[\lambda_m,\lambda_{m+1}]$ meets the interior of this triangle in points of the relative
interior of that edge. Together with Corollary~\ref{c:edges}, this shows that every edge
of $S_n$ not listed there contains interior points of~$\Lambda_n$.}
\end{rmk}

We conclude with several open questions.

\subsection{Open questions}

\begin{question}
Is \nb{$\Lambda_4=S_4?$} Proposition~\textup{\ref{p:sparse}} reduces this to matrices with at most two
fractional entries per row. 
\nv{Settled affirmatively by the computer-assisted proof in~\textup{\cite{Marshall026}}; an earlier independent proof was announced in~\textup{\cite{Liu026}}.}
\end{question}

\begin{question}
\label{q:shape}Describe $\Lambda_n$ for \nb{$n\ge5$}. \nv{It is not convex~\textup{\cite[Theorem~4]{Liu026}}.} 
\nb{Which digraphs realize its boundary near $[\lambda_1,\lambda_2]$, in the spirit of Kirkland--\v{S}migoc's \textup{\cite{KirklandSmigoc022}} description of the matrices realizing $\partial\Theta_n?$ Do Hamiltonian cycles with chords, as in Section~\textup{\ref{s:chords}}, and twin structures suffice$?$}
\end{question}

\begin{question}
\label{q:top}
\nb{The edges $[0,\lambda_1]$ and $[\lambda_{n-2},1]$ and their conjugates lie on
$\partial\Lambda_n$ by~\textup{\cite[Theorems~5 and~6]{AgaChe05LAA} (}based
on~\textup{\cite{DmitrievDynkin45E,DmitrievDynkin46E})}. By Remark~\textup{\ref{r:edgesbd}}, for $n\ge5$
every other edge of $S_n$, except possibly, for even~$n$, the horizontal edges
$[\lambda_{n/2-1},\lambda_{n/2}]$ and $[\bar\lambda_{n/2-1},\bar\lambda_{n/2}]$, contains
interior points of~$\Lambda_n$. \nv{At the horizontal edges, the second-order term never points outward \textup{(}Remark~\textup{\ref{r:central}} and~\textup{\cite[Proposition~3]{Liu026})}.} For even~$n$, do the horizontal edges lie on
$\partial\Lambda_n?$ Moreover, is
$\max\{\operatorname{Im}\lambda:\lambda\in\Lambda_n\}$ equal to\, $\frac1n\cot\frac\pi n?$ For $n=4$,
the latter is equivalent to Question~\textup{1}.}
\end{question}

\begin{question}
Does $\Lambda_n$ converge, in Hausdorff distance, to the same cycloidal region of~\textup{\cite[{\it Theorem}~7]{AgaChe05LAA}} as $S_n?$
\end{question}

\appendix
\renewcommand{\thesection}{\appendixname~\Alph{section}}
\section{Certification and numerical reproduction}\label{a:certification}
\renewcommand{\thesection}{\Alph{section}}
\subsection{Certification of the $n=7$ counterexample}

Recall
\[
\begin{aligned}
&p(y)=y^5-7y^4+20y^3-30y^2+25y-11,\\
&c=0.5696689912737478+0.9377623754517331\,i.
\end{aligned}
\]

Set $r=10^{-16}$. Since the real and imaginary parts of $c$ are rational numbers, the following estimates may be checked by exact rational arithmetic:
\[
|p(c)|<8.17\cdot10^{-16},
\qquad
|p'(c)|>9.75,
\]
and
\[
|p''(c)|<37.70,\qquad
|p^{(3)}(c)|<93.88,\qquad
|p^{(4)}(c)|<150.31,\qquad
|p^{(5)}(c)|=120.
\]
Hence, for $|z|=r$,
\[
\left|
\sum_{k=2}^5\frac{p^{(k)}(c)}{k!}z^k
\right|
<1.89\cdot10^{-31}.
\]
It follows that
\[
\left|
p(c)+
\sum_{k=2}^5\frac{p^{(k)}(c)}{k!}z^k
\right|
<
8.18\cdot10^{-16}
<
9.75\cdot10^{-16}
<
|p'(c)z|.
\]

By Rouch\'e's theorem, $p(c+z)$ and $p'(c)z$ have the same number of zeros in $|z|<r$, namely one. Thus $p$ has a zero $y^\ast$ such
that
\[
|y^\ast-c|<10^{-16}.
\]

It remains to show that $\lambda^\ast=y^\ast/7$ lies outside $S_7$.
Let
$
\mu=e^{-2\pi i/7}.
$
The line containing the edge $[\lambda_1,\lambda_2]$ is a supporting line of $S_7$, and its signed Euclidean distance, positive on the
exterior side, can be written in the $y=7\lambda$ coordinate as
\[
d_7(y)
=
\frac17
\left(
\operatorname{Re}\bigl(\bar\mu(1-y)\bigr)-1
\right).
\]
Indeed,
\[
d_7(y)
=
\operatorname{Re}
\bigl(-\bar\mu(\lambda-\lambda_1)\bigr),
\qquad \lambda=\frac y7,
\]
and $-\mu$ is a unit normal to the edge.

Using, for example, the rational interval bounds
\[
0.6234898018587335
<
\cos\frac{2\pi}{7}
<
0.6234898018587336,
\]
\[
0.7818314824680298
<
\sin\frac{2\pi}{7}
<
0.7818314824680299,
\]
one obtains
\[
2.1130622379564\cdot10^{-4}
<
d_7(c)
<
2.1130622379568\cdot10^{-4}.
\]
Since $d_7$ is $1/7$-Lipschitz as a function of $y$,
\[
\bigl|d_7(y^\ast)-d_7(c)\bigr|\le\frac{r}{7}<1.5\cdot10^{-17},
\qquad\text{so}\qquad
d_7(y^\ast)=2.11306223795\ldots\cdot10^{-4}>0.
\]
Thus $\lambda^\ast\notin S_7$.

Finally, a direct interval evaluation of the orthogonal projection onto the line through $[\lambda_1,\lambda_2]$ gives a parameter in $(0.158,0.160)$, so the foot of the perpendicular lies in the interior of that edge. Hence $d_7(y^\ast)$ is the distance from $\lambda^\ast$ to $S_7$, which proves the stated value.

\begin{nbblock}
\renewcommand{\thesection}{\appendixname~\Alph{section}}
\subsection{Certification of the $n=5$ counterexample}\label{a:cert5}
\renewcommand{\thesection}{\Alph{section}}

Let
\[
q(x)=x^4-12x^3+57x^2-134x+140,\qquad
c=2.00696842206+2.12135484024\,i,\qquad r=10^{-10}.
\]
Since the real and imaginary parts of $c$ are rational, the following bounds can be checked
in exact rational arithmetic, by comparing squared moduli with squares of rational numbers:
\[
|q(c)|<1.7\cdot10^{-10},\qquad |q'(c)|>35.75,\qquad
|q''(c)|<62.17,\qquad |q'''(c)|<56.22,\qquad q^{(4)}=24 .
\]
Hence, for $|z|=r$,
\[
\Bigl|\,q(c)+\sum_{k=2}^{4}\frac{q^{(k)}(c)}{k!}z^k\Bigr|
<1.7\cdot10^{-10}+3.2\cdot10^{-19}
<3.575\cdot10^{-9}
<|q'(c)z|.
\]
By Rouch\'e's theorem, $q$ has exactly one zero $x^\sharp$ with $|x^\sharp-c|<10^{-10}$.
Put $\lambda^\sharp=x^\sharp/10$.

Let $\mu=e^{-2\pi i/5}$. Since $\operatorname{Re}(\bar\mu\lambda_1)=\frac15(\cos\frac{2\pi}5-1)$,
in the coordinate $x=10\lambda$ we have
\[
d\Bigl(\frac x{10}\Bigr)=\frac1{10}\Bigl(\operatorname{Re}\bigl(\bar\mu(2-x)\bigr)-2\Bigr)
=\frac1{10}\Bigl((2-\operatorname{Re}x)\cos\tfrac{2\pi}5+\operatorname{Im}x\,\sin\tfrac{2\pi}5-2\Bigr).
\]
Since $\cos\frac{2\pi}5=\frac{\sqrt5-1}4$ and $\sin\frac{2\pi}5=\frac14\sqrt{10+2\sqrt5}$,
the rational bounds
\[
0.30901699437494<\cos\tfrac{2\pi}5<0.30901699437495,\qquad
0.95105651629515<\sin\tfrac{2\pi}5<0.95105651629516
\]
follow from $(4\cos\frac{2\pi}5+1)^2=5$ and $\cos^2+\sin^2=1$. They give
\[
1.537498334399\cdot10^{-3}<d\Bigl(\frac c{10}\Bigr)<1.537498334401\cdot10^{-3}.
\]
Since $x\mapsto d(x/10)$ is $\frac1{10}$-Lipschitz,
$|d(\lambda^\sharp)-d(c/10)|<10^{-11}$, so $d(\lambda^\sharp)=1.5374983\ldots\cdot10^{-3}>0$,
and $\lambda^\sharp\notin S_5$ by Lemma~\ref{l:edge}.

Finally, an interval evaluation of the orthogonal projection of $\lambda^\sharp$ onto the line
through $\lambda_1$ and $\lambda_2$ gives the parameter
$\operatorname{Re}\bigl(\bar g_2(\lambda^\sharp-\lambda_1)\bigr)/|g_2|^2\in(0.174,0.175)$.
So the foot of the perpendicular lies in the interior of the edge $[\lambda_1,\lambda_2]$, and
$d(\lambda^\sharp)$ is the distance from $\lambda^\sharp$ to $S_5$.
\end{nbblock}

\subsection{Numerical reproduction}

The following Python script reproduces the numerical eigenvalue and
signed distance reported in Theorem~\ref{t:counter}.

\begin{verbatim}
import numpy as np, mpmath as mp
n = 7
A = np.zeros((n, n))                 # vertices 0..5 = cycle, 6 = twin of 0
for i in range(5): A[i, i+1] = 1
A[5, 0] = A[5, 6] = 1; A[6, 1] = 1
L = (np.diag(A.sum(1)) - A) / n
print(np.linalg.eigvals(L))

mp.mp.dps = 50
roots, err = mp.polyroots([1, -7, 20, -30, 25, -11], error=True,
                          maxsteps=200, extraprec=200)
y = [r for r in roots if 0 < mp.re(r) < 1 and mp.im(r) > 0][0]
lam = y / 7
mu = mp.exp(-2j*mp.pi/7); l1 = (1-mu)/7; l2 = l1 + (1-mu**2)/7
e = l2 - l1; nr = mp.mpc(-e.imag, e.real)
print(lam, err, mp.re((lam-l1)*mp.conj(nr))/abs(nr))   # > 0: outside S_7
\end{verbatim}

\begin{nbblock}
The next script does the same for Theorem~\ref{t:five}.
\begin{verbatim}
import numpy as np
n, t = 5, 0.5
A = np.zeros((n, n))
for k in range(n): A[k, (k+1) % n] = 1          # Hamiltonian cycle
A[0, 2] = A[1, 3] = t                           # chords 1->3, 2->4
L = (np.diag(A.sum(1)) - A) / n
mu = np.exp(-2j*np.pi/n); l1 = (1 - mu)/n
ev = np.linalg.eigvals(L)
print(ev, ((ev - l1)*np.conj(-mu)).real.max())  # > 0: outside S_5
\end{verbatim}
\end{nbblock}

\Up{.9}
\section*{Acknowledgments}
The author thanks Rafig Agaev for his long-term cooperation in the study of digraph Laplacians and Manjunath Hariprasad for numerous discussions regarding the Laplacian spectra that promise new insights.

\Up{.4}
\section*{Declaration of competing interest}
There is no competing interest.

\Up{.4}
\section*{Declaration of generative AI and AI-assisted technologies in the manuscript preparation process}
In preparing this work, the author used Claude (Anthropic) and ChatGPT (OpenAI) in order to perform eigenvector calculations, verify derivations, and present the results as precisely as possible. The author has thoroughly checked all AI-generated material and bears full responsibility for the content of the article.

\Up{.4}
\bibliographystyle{cas-model2-names}
\bibliography{all2}

\end{document}